\documentclass[12pt]{amsart}
\usepackage{amssymb,amsmath}
\oddsidemargin=-.0cm
\evensidemargin=-.0cm
\textwidth=16cm
\textheight=22cm
\topmargin=0cm
\newtheorem{proposition}{Proposition}[section]
\newtheorem{theorem}[proposition]{Theorem}
\newtheorem{corollary}[proposition]{Corollary}
\newtheorem{lemma}[proposition]{Lemma}
\theoremstyle{definition}
\newtheorem{definition}[proposition]{Definition}
\newtheorem{remark}[proposition]{Remark}

\numberwithin{equation}{section}

\def \x{\mathbf x}
\title[Transmission problem for nonlinear plate]
{A nonlinear transmission problem \\for a compound plate with thermoelastic part}

\author[M. Potomkin]
{M. Potomkin}

\address{Kharkov National University - Department of Mathematics and Mechanics
\newline\indent
4 Svobody sq, 61077 Kharkiv, Ukraine}
\email{mika\_potemkin@mail.ru}

\subjclass[2000]{35B41}

\keywords{transmission problem ,thermoelastic plates, global attractor}
\begin{document}
\begin{abstract}
In this paper we study a nonlinear transmission problem for a plate which consists of thermoelastic and isothermal parts. The problem generates a dynamical system in a suitable Hilbert space. Main result is the proof of the asymptotic smoothness of this dynamical system. Also we prove the existence of a compact global attractor in particular cases when the nonlinearity is of Berger type or scalar.      
\end{abstract}

\maketitle 

\section {Introduction}
In this paper we deal with a partially thermoelastic plate: one part is of isothermal material, the second one is of material which structure does not allow neglecting thermal dissipation. Due to thermal dissipation purely thermoelastic plate is exponentially stable in linear case (see, e.g., survey in \cite[Chapter 3A]{LasieTriggi}) or possesses a compact global attractor in cases of different kind of nonlinearities (see, e.g., \cite{ChuiBucci,Book,Manuscript} for Berger type of nonlinearity and \cite{ChuiLasie,chla26,Manuscript,chla31,Karman,khan} for von Karman type). From the other hand, in the case of purely isothermal plate the energy is constant thus there could not be any decay to zero point in linear model and global attractor in nonlinear model. Here we investigate whether the thermal dissipation on a part of the plate is enough for a plate to have any stabilization. Exponential stability of linear problem of this type has been established in \cite{mr}.    

Let $\Omega_1$, $\Omega_2$ and $\Omega$ are bounded open sets in $\mathbb R^2$, $\Gamma_0=\overline{\Omega}_1\cap\overline{\Omega}_2$, $\Gamma_1=\partial\Omega_1/\Gamma_0$ and $\Gamma_2=\partial\Omega_2/\Gamma_0$ are smooth surfaces. Also we set $\Omega=\Omega_1\cup\Omega_2\cup\Gamma_0$ and assume that $\overline{\Gamma}_1\cap\overline{\Gamma}_2=\emptyset$. 
In the model under consideration the plate (its middle surface), in equilibrium, occupies the domain $\Omega$ which consists of two parts $\Omega_1$ and $\Omega_2$ with common boundary $\Gamma_0$. In what follows below $\nu$ denotes the outward vector on $\Gamma_1$ and $\Gamma_2$, in cases of common boundary $\Gamma_0$ the vector $\nu$ is outward for $\Omega_2$.  
       
Functions $u(\x,t)$ and $v(\x,t)$ denotes the vertical displacement of the plate, the function $\theta(\x,t)$ responds to the temperature regime. 

The equations are as follows 
\begin{eqnarray}
\rho_1 u_{tt}+\beta_1\Delta^2 u + \mu\Delta \theta +F_1(u,v) =0 \;\;&&\mathrm{in}\;\;\Omega_1\times \mathbb R^+\label{1} \\
\rho_0 \theta_t -\beta_0 \Delta \theta -\mu\Delta u_t=0\;\;&&\mathrm{in}\;\;\Omega_1\times \mathbb R^+\label {2} \\
\rho_2 v_{tt}+\beta_2\Delta^2 v +F_2(u,v)=0\;\;&&\mathrm{in}\;\;\Omega_2\times \mathbb R^+ \label{3}
\end{eqnarray}
Boundary conditions imposed on $u$ and $v$ along $\Gamma_1$ and $\Gamma_2$ are clamped
\begin{equation}\label{4}
u=\frac{\partial u}{\partial \nu}=0\;\; \mathrm{on}\;\; \Gamma_1\times\mathbb R^+,\;\;\;\;
v=\frac{\partial v}{\partial \nu}=0\;\; \mathrm{on}\;\; \Gamma_2\times\mathbb R^+
\end{equation}  
We assume that $\theta$ satisfies Newton law of cooling (with coefficient $\lambda\geq0$) through the $\Gamma_1$ and $\theta$ vanishes on $\Gamma_0$
\begin{equation}
\theta=0 \;\; \mathrm{on}\;\; \Gamma_0\times\mathbb R^+,\;\;\;\;
\frac{\partial \theta}{\partial \nu}+\lambda\theta=0\;\; \mathrm{on}\;\; \Gamma_1\times\mathbb R^+\label{5}
\end{equation}
Also we impose the following boundary conditions on $\Gamma_0$
\begin{equation}
u=v,\;\;\frac{\partial u}{\partial \nu}=\frac{\partial v}{\partial \nu}, \;\;
\beta_1\Delta u=\beta_2\Delta v,\;\; \beta_1\frac{\partial \Delta u}{\partial \nu}+\mu\frac{\partial \theta}{\partial \nu}=\beta_2\frac{\partial \Delta v}{\partial \nu}\;\;\mathrm{on}\;\;\Gamma_0\times\mathbb R^+.\label{6}
\end{equation}
Initial data:
\begin{equation} \label {7}
\begin{array}{l}
u(\x,0)=u_0(\x),\;\;u_t(\x,0)=u_1(\x),\;\;\theta(\x,0)=\theta_0(\x)\;\;\mathrm{on}\;\;\Omega_1,\\ 
v(\x,0)=v_0(\x),\;\;v_t(\x,0)=v_1(\x)\;\;\mathrm{on}\;\;\Omega_2.
\end{array}
\end{equation}

Coefficients  $\rho_i$, $\beta_i$ and $\mu$ are strictly positive, $F_i\;:H^2(\Omega_1)\times H^2(\Omega_2)\longrightarrow L^2(\Omega_i)$ are nonlinear functions (we impose more conditions on $F_i$ in the next section when all necessary notations will be introduced).    

In this paper we propose general approach for nonlinear transmission plate problems in study of the property of asymptotic smoothness (its definition is given in Subsection 4.1). We may apply our abstract result to at least three concrete problems. 

{\bf Problem A} describes oscillations of a plate in Berger approach. In this case 
$$
F_1(u,v)=-M(u,v) \Delta u,\;\;\;F_2(u,v)=-M(u,v)\Delta v,
$$  
where 
\begin{equation}\label{Bergern}
M(u,v)=\Gamma+\gamma\left[\int\limits_{\Omega_1}|\nabla u|^2\mathrm{d}\x+\int\limits_{\Omega_2}|\nabla v|^2\mathrm{d}\x\right],
\end{equation}
here $\Gamma$ is a real number, $\gamma$ is strictly positive. 

In {\bf problem B} we consider scalar nonlinearities, namely, 
$$
F_1(u,v)=f_1(u),\;\;\;F_2(u,v)=f_2(v),
$$  
where scalar functions $f_i\in C^2$ satisfy 
\begin{eqnarray}
&&|f'_i(s)|\leq C(1+|s|^p),\;\;\exists p>1, C>0,\\
&&\liminf\limits_{|s|\rightarrow\infty}\frac{f_i(s)}{s}>0.
\end{eqnarray}

{\bf Problem C} deals with von Karman nonlinearity. Here we set $\Gamma_2=\emptyset$ and 
$$
F_1(u,v)=-[u,\mathcal{F}_1],\;\;\;F_2(u,v)=-[v,\mathcal{F}_2],
$$  
where $[\psi,\varphi]=\psi_{xx}\varphi_{yy}+\psi_{yy}\varphi_{xx}-2\psi_{xy}\varphi_{xy}$ is the von Karman brackets; the Airy stress functions $\mathcal{F}_1$ and $\mathcal{F}_2$ solve (parameters $\gamma_i$ are strictly positive)
\begin{equation}\label{karman}
\gamma_1\Delta^2\mathcal{F}_1+[u,u]=0\;\mathrm{in}\;\Omega_1\times{\mathbb R^+}\;\;\mathrm{and}\;\;\gamma_2\Delta^2\mathcal{F}_2+[v,v]=0\;\mathrm{in}\;\Omega_2\times{\mathbb R^+}
\end{equation}
with boundary conditions 
$$
\begin{array}{ll}
\mathrm{on} \;\Gamma_1:&\mathcal{F}_1=\frac{\partial}{\partial\nu}\mathcal{F}_1=0,\\
\mathrm{on}\; \Gamma_0:&\mathcal{F}_1=\mathcal{F}_2,\;\;
\frac{\partial}{\partial\nu}\mathcal{F}_1=\frac{\partial}{\partial\nu}\mathcal{F}_2,\;\;
\gamma_1\Delta\mathcal{F}_1=\gamma_2\Delta\mathcal{F}_2,\;\;
\gamma_1\frac{\partial}{\partial\nu}\Delta\mathcal{F}_1=\gamma_2\frac{\partial}{\partial\nu}\Delta\mathcal{F}_2.
\end{array}
$$

Analysis of linear thermoelastic plates with different boundary conditions is given in \cite[Chapter 3]{LasieTriggi}. Also we refer to earlier works \cite{Avalos,Avalos2}. See also \cite{Lagnese}.  

Plate models with nonlinear term of type \eqref{Bergern} (we call such plates/models to be of Berger type) were introduced in \cite{Berger}. Asymptotic behavior of isothermal Berger plate with mechanical dissipation were analyzed in \cite[Chapter 4]{Book} and, as particular case, thermoelastic Berger plate were investigated in \cite{ChuiBucci}, relative results are obtained in \cite{Giorgi}. All works contain the proof of the existence of a compact global attractor. Models with scalar nonlinearities and different type of damping were considered for wave equations. We refer to \cite{Manuscript} where plenty of such models were considered in the framework of the method of stabilizability estimate and relative estimates which we use in the paper (see \eqref{th_ineq} and \eqref{stab_ineq} here). Many results devoted to von Karman equation are collected in \cite{Karman}. See also earlier works \cite{ChuiLasie,chla26,chla31,khan}.    

For the examples of investigation of transmission problems similar to the problem in this paper we refer to \cite{cr,marnari,mr,rn} and references therein. Asymptotic behavior of composite systems with localized damping is considered in \cite{Tynd,ChTynd} (for the problem \eqref{1}-\eqref{6} we may say that there is a localized thermal damping in equations). Also we refer to \cite{Lada} which contains the consideration of elliptic transmission problem and to \cite{Lions} where transmission problem for Navier-Stokes equation is considered.        

Key tool in the investigation of long-time behavior is appropriate estimates on the norm (of the phase space) of the difference of two solutions with initial data from a positively invariant set (the observability estimates). We follow the methods described in \cite{Manuscript,Karman} (see also \cite{ChuiBucci,Tynd,ChuiLasie,chla27,chla26,chla31,ChTynd,khan} and references therein) where such estimates allow to prove the existence of a global attractor, its smoothness and finite-dimensionality; these estimates may differ from model to model and their derivation strongly depends on considered model. Also for one model it is useful to consider two such estimates when one of them is weaker then second one but requires less conditions. For example, for problem B the estimate \eqref{phoh} gives just asymptotic smoothness; when we additionally set $f_2\equiv0$ then \eqref{phoh} could be transformed into \eqref{stab_ineq} which gives smoothness and finite-dimensionality.  
          
The most difficult part in derivation of observability estimate concerns with linear part of equation. In Theorem \ref{aux_the} we give the inequality which holds for any nonlinearity; this result could be useful if one consider the type of nonlinearity different from that is under consideration in this paper. In the proof we use multiplicators applied earlier, e.g., in \cite{Tynd, mr} (for origin works we refer to \cite{Avalos, Avalos2, Lagnese} and references therein).     
 
Our first main result is the property of asymptotic smoothness. To achieve it we use method of  so-called compensated compactness function first introduced in \cite{khan}. We apply this method using formulation in \cite[Proposition 2.10]{Manuscript}. As in \cite{mr} we need to impose conditions on papameters:
\begin{equation}\label{imposion}
\rho_1\geq \rho_2 \;\;\mathrm{and}\;\;\beta_1\leq\beta_2.
\end{equation}
and geometric structure of $\Omega$
\begin{eqnarray}
&&(\x-\x_0)\cdot\nu(\x)\geq \delta_0\;\mathrm{on}\;\Gamma_0,\label{geom1}\\
&&(\x-\x_0)\cdot\nu(\x)\leq 0\;\mathrm{on}\;\Gamma_2\label{geom2}
\end{eqnarray} 
for some $\x_0\in\mathbb R^2$ and $\delta_0>0$.

The exact definition of asymptotic smoothness is given in Section 4, now we just say that this property implies the existence of compact attractors for every bounded positevily invariant set. With the help of idea of a stabilizability estimate (see \cite{Manuscript}) for problem B we prove that these "local" attractors are smooth and of finite fractal dimension. For problems A and B (for problem B with the restriction $f_2\equiv 0$, as in \cite{marnari}) we prove the existence of a compact global attractor. This is our second main result. The main difficulty how to obtain global (not just local) attractor lies in the proof that appropriate Lyapunov function (mechanical energy of the system) is constant only on stationary trajectories. 
    
Up to our best knowledge asymptotic behavior in transmission problem for a plate of type A, B and C was not considered before. 
 
The paper is organized as follows: in the next section we will establish well-posedness of the problem; in Section 3 we establish the main auxiliary inequality. Subsection 4.1 contains necessary definitions and statements we use. The proof of asymptotic smoothness is given in Subsection 4.2. The proof of the existence of compact global attractor is given in Subsection 4.3.   

\section{Preliminaries and well-posedness}
We introduce notations similar to introduced in \cite{mr}. In the sequel if a function $\psi(\x)$ defined for $\x \in \Omega$ is equal to $\psi_i(\x)$ if ${\x}\in\Omega_i$, $i=1,2$, then we denote $\psi=\left\{\psi_1,\psi_2\right\}$. 

The following Hilbert spaces will be used in the sequel 
$$
H^2_{T}:=\left\{\left.\left\{\phi_1,\phi_2\right\}\in H^2(\Omega_1)\times H^2(\Omega_2)\right|
\begin{array}{l} \phi_i=\frac{\partial \phi_i}{\partial \nu}=0 \;\;\mathrm{on}\;\;\Gamma_i,\\
\phi_1=\phi_2 \;\;\mathrm{and} \;\; \frac{\partial\phi_1}{\partial \nu}=\frac{\partial \phi_2}{\partial\nu}\;\;\mathrm{on}\;\;\Gamma_0\end{array}\right\}
$$
$$
H^1_{D}:=\left\{\phi\in H^1(\Omega_1)\;:\;\phi=0 \;\;\mathrm{on}\;\;\Gamma_0\right\}
$$  
with inner products 
$$
\begin{array}{rll}
(\left\{w_1,w_2\right\},\left\{\phi_1,\phi_2\right\})_{H^2_T}
&:=&\int_{\Omega_1}\beta_1\Delta w_1\Delta \phi_1\mathrm{d}\x+\int_{\Omega_2}\beta_2\Delta w_2\Delta\phi_2\mathrm{d}\x,\\ 
(w,\phi)_{H^1_D}&:=&\int_{\Omega_1}\beta_0\nabla w\cdot\nabla\phi\mathrm{d}\x+\int_{\Gamma_1} \beta_0 \lambda w \phi\mathrm{d}\x.
\end{array}
$$
In what follows $H^{-2}_T$ and $H^{-1}_{D}$ denote the dual spaces of $H^2_T$ and $H^1_D$, respectively. Also we need to note that $H^2_T=H^2_0(\Omega)$ and their norms are equivalent. Thus there exists $C>0$ such that 
$$
||u||_{H^2(\Omega_1)}+||v||_{H^2(\Omega_2)}\leq C (||\Delta u||_{L^2(\Omega_1)}+||\Delta v||_{L^2(\Omega_2)})
$$  
for all $\left\{u,v\right\}\in H^2_T$.

In order to formulate well-posedness result and use linear semigroup theory we rewrite the original system as a Cauchy problem. To this end we consider the operators (following \cite{mr}, see also  \cite{Lagnese})
$$\begin{array}{c}
A_0\;:\;H^2_T\rightarrow H^{-2}_{T},\;\;\;
A_1\;:\;L^2(\Omega_1)\times L^2(\Omega_2)\times L^2(\Omega_1)\rightarrow L^2(\Omega_1)\times L^2(\Omega_2)\times L^2(\Omega_1),\\
B_0\;:\;H^2_T\times H^1_D\rightarrow H^{-2}_{T}\times H^{-1}_{D}
\end{array}
$$ 
given by 
$$
\begin{array}{rcl}
\left\langle A_0\left\{w_1,w_2\right\},\left\{\phi_1,\phi_2\right\}\right\rangle
&:=&(\left\{w_1,w_2\right\},\left\{\phi_1,\phi_2\right\})_{H^2_T},\\ 
A_1\left\{w_1,w_2,w_3\right\}&:=&\left\{\rho_1 w_1, \rho_2 w_2, \rho_0 w_3\right\} \\ 
\left\langle B_0\left\{w_1,w_2,w_3\right\},\left\{\phi_1,\phi_2,\phi_3\right\}\right\rangle
&:=&\int_{\Omega_1} \mu (w_3\Delta \phi_1-\Delta w_1\phi_3)\mathrm{d}\x + (w_3,\phi_3)_{H_{D}^1},
\end{array}
$$
Here a trilinear form $\left\{\cdot,\cdot,\cdot\right\}$ denotes a vector from a corresponding subspace of $L^2(\Omega_1)\times L^2(\Omega_2)\times L^2(\Omega_1)$. Also we denote $B_1\left\{\phi_1,\phi_2\right\}:= \left\{A_0\left\{\phi_1,\phi_2\right\},0\right\}$ and
$$
\mathbb{A}=\left(
\begin{array}{cc}
I_1&0\\
0&A_1
\end{array}\right),\;\;\;\;\;
\mathbb{B}=\left(
\begin{array}{cc}
0& -I_2\\
B_1& B_0
\end{array}\right)
$$
Here $I_1\left\{w_1,w_2\right\}:=\left\{w_1,w_2\right\}$ and $I_2\left\{w_1,w_2,w_3\right\}:=\left\{w_1,w_2\right\}$. We note that linear part of original equations \eqref{1}-\eqref{6} (i.e., when $F_1=F_2=0$) could be rewritten in terms of introduced operators as follows 
$$
{\mathbb A}\frac{\mathrm{d}w}{\mathrm{d}t}+{\mathbb B}w=0,\;\;w(t)=(u(t),v(t),u_t(t),v_t(t),\theta(t)),\;\;t>0. 
$$

Finally, let us introduce the linear operator $\Lambda:=-\mathbb{A}^{-1}\mathbb{B}$ with the domain $D(\Lambda)$ in $\mathcal{H}=H^2_T\times L^2(\Omega_1)\times L^2(\Omega_2)\times L^2(\Omega_1)$ given by 
$$
D(\Lambda):= \left\{
\begin{array}{l}
w=(w_1,w_2,w_3,w_4,w_5)\in H^2_T\times H^2_T\times H^1_D\;\;\mathrm{such}\;\;\mathrm{that}\\
\hspace{40 pt}
\left\{A_0\left\{w_1,w_2\right\},0\right\}+B_0\left\{w_3,w_4,w_5\right\}\in L^2(\Omega_1)\times L^2(\Omega_2)\times L^2(\Omega_1)
\end{array}
\right\}
$$

Repeating arguments of \cite[Lemma 3.10]{mr} one can assure that there holds 
$$
D(\Lambda)=D_0\equiv \left\{
\begin{array}{l}
w\in \left[H^2_{T}\cap\left(H^4(\Omega_1)\times H^4(\Omega_2)\right)\right]\times H^2_T\times\left[H^2(\Omega_1)\cap H_D^1\right]:\\\hspace{20 pt}
\beta_1\Delta w_1=\beta_2\Delta w_2 \;\mathrm{and}\;\beta_1\frac{\partial\Delta w_1}{\partial\nu}+\mu\frac{\partial\theta}{\partial \nu}=\beta_2\frac{\partial\Delta w_2}{\partial\nu} \;\mathrm{on}\;\Gamma_0,\\\hspace{80 pt}\;\frac{\partial w_5}{\partial \nu}+\lambda w_5=0\;\mathrm{on}\;\Gamma_1 
\end{array}\right\}.
$$ 

\begin{theorem}\label{th1}
The operator $\Lambda$ is a generator of a strongly continuous semigroup of contractions on~$\mathcal{H}$.
\end{theorem}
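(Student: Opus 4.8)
The plan is to invoke the Lumer--Phillips theorem, so the two properties to verify are that $\Lambda$ is \emph{dissipative} with respect to a suitable inner product on $\mathcal H$ and that $I-\Lambda$ has full range; density of $D(\Lambda)=D_0$ is clear, since $D_0$ contains test functions and is dense in $\mathcal H$. First I would equip $\mathcal H$ with the \emph{energy} inner product induced by $\mathbb A$,
\[
(w,\tilde w)_{\mathcal H}:=(\{w_1,w_2\},\{\tilde w_1,\tilde w_2\})_{H^2_T}+\rho_1(w_3,\tilde w_3)_{L^2(\Omega_1)}+\rho_2(w_4,\tilde w_4)_{L^2(\Omega_2)}+\rho_0(w_5,\tilde w_5)_{L^2(\Omega_1)},
\]
which is equivalent to the standard norm because $\rho_0,\rho_1,\rho_2>0$.

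For dissipativity I would use the block form $\Lambda w=((w_3,w_4),-A_1^{-1}[B_1\{w_1,w_2\}+B_0\{w_3,w_4,w_5\}])$, giving for $w\in D_0$
\[
(\Lambda w,w)_{\mathcal H}=(\{w_3,w_4\},\{w_1,w_2\})_{H^2_T}-\langle B_1\{w_1,w_2\},\{w_3,w_4,w_5\}\rangle-\langle B_0\{w_3,w_4,w_5\},\{w_3,w_4,w_5\}\rangle.
\]
The elastic contribution cancels, since $\langle B_1\{w_1,w_2\},\{w_3,w_4,w_5\}\rangle=(\{w_1,w_2\},\{w_3,w_4\})_{H^2_T}$, and in the diagonal term the thermo-mechanical coupling cancels identically, $\int_{\Omega_1}\mu(w_5\Delta w_3-\Delta w_3\,w_5)\,\mathrm{d}\x=0$, leaving only $\langle B_0\cdots\rangle=\|w_5\|^2_{H^1_D}$. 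Hence $(\Lambda w,w)_{\mathcal H}=-\|w_5\|^2_{H^1_D}\le 0$: all the dissipation is thermal and localized to $\Omega_1$.

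The main work is the range condition. Given $f=(\{f_1,f_2\},g_1,g_2,h)\in\mathcal H$ I would solve $(I-\Lambda)w=f$; eliminating the velocities through the top block ($w_3=w_1-f_1$, $w_4=w_2-f_2$) reduces the problem to finding $(\{w_1,w_2\},w_5)\in H^2_T\times H^1_D$ solving a variational identity $a((\{w_1,w_2\},w_5),(\{\phi_1,\phi_2\},\phi_3))=L(\{\phi_1,\phi_2\},\phi_3)$ over all test elements of $H^2_T\times H^1_D$. The decisive point is that at this resolvent value the cross terms from $B_0$ again cancel on the diagonal, so that $a$ is coercive,
\[
a((\{w_1,w_2\},w_5),(\{w_1,w_2\},w_5))\ge \|\{w_1,w_2\}\|^2_{H^2_T}+\|w_5\|^2_{H^1_D},
\]
and bounded (the coupling integrals are controlled by $\|w_5\|_{L^2}\|\Delta w_i\|_{L^2}$), while $L$ is a bounded functional since $f_1,f_2\in H^2$ and $g_1,g_2,h\in L^2$. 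Lax--Milgram then produces a unique weak solution.

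Finally I would upgrade this weak solution to an element of $D_0$: the transmission conditions on $\Gamma_0$ and the Newton condition on $\Gamma_1$ are precisely the natural boundary conditions encoded in $A_0$, $B_0$ and the spaces $H^2_T$, $H^1_D$, so elliptic regularity for the transmission problem (as in \cite[Lemma 3.10]{mr}, already quoted for $D(\Lambda)=D_0$) recovers the $H^4\times H^4\times H^2$ regularity and the conditions defining $D_0$. Combined with density, Lumer--Phillips yields that $\Lambda$ generates a $C_0$-semigroup of contractions on $\mathcal H$. The step I expect to be most delicate is this regularity/transmission part---checking that the Lax--Milgram solution genuinely satisfies the $\Gamma_0$ matching conditions---but it is largely dispatched by the cited result.
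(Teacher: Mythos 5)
Your overall strategy is exactly the standard one, and it is the route taken by the reference the paper points to: the paper's ``proof'' of this theorem is literally the citation \cite[Theorem 2.3]{mr}, so you are supplying the argument the paper omits rather than diverging from it. Your dissipativity computation is correct: with the $\mathbb A$-weighted inner product the term $\langle B_1\{w_1,w_2\},\{w_3,w_4,w_5\}\rangle=(\{w_1,w_2\},\{w_3,w_4\})_{H^2_T}$ cancels the kinetic--elastic pairing, the skew coupling $\int_{\Omega_1}\mu(w_5\Delta w_3-\Delta w_3\,w_5)\,\mathrm d\x$ vanishes on the diagonal, and $(\Lambda w,w)_{\mathcal H}=-\|w_5\|_{H^1_D}^2\le 0$. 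The Lax--Milgram step also works as you describe: after eliminating $w_3=w_1-f_1$, $w_4=w_2-f_2$, the form is coercive on $H^2_T\times H^1_D$ because the antisymmetric coupling again drops out on the diagonal, and the resulting weak solution lies in $D(\Lambda)$ \emph{as defined in the paper} directly from the variational identity, so the upgrade to $D_0$ is not even needed for Lumer--Phillips (it is the separate statement $D(\Lambda)=D_0$, already delegated to \cite[Lemma 3.10]{mr}).

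The one genuine soft spot is your density claim. ``$D_0$ contains test functions'' does not give density: an element of $D_0$ must satisfy $\beta_1\Delta w_1=\beta_2\Delta w_2$ on $\Gamma_0$, which a function smooth across $\Gamma_0$ fails when $\beta_1\neq\beta_2$ unless $\Delta w=0$ there, and functions supported away from the interface are not dense in $H^2_0(\Omega)$ since a curve has positive $H^2$-capacity in the plane. The clean fix costs nothing: in a Hilbert space a dissipative operator with $\mathrm{Ran}(I-\Lambda)=\mathcal H$ is automatically densely defined (e.g. \cite[Theorem 1.4.6]{Pazy}), so density follows from the two properties you have already established rather than needing a separate verification.
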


\begin{proof}
See \cite[Theorem 2.3]{mr}.\\
\end{proof}  

Now let us denote for any $w=(w_1,w_2,w_3,w_4,w_5)\in \mathcal{H}$  
$$
\mathbb{F}(w)=\mathbb {A}^{-1}(0,0,-F_1(w_1,w_2), -F_2(w_1,w_2), 0).
$$

We impose the following conditions
\begin{eqnarray}\label{lipsitz}
&&\int_{\Omega_1}|F_1(w^1_1,w^1_2)-F_1(w^2_1,w^2_2)|^2\mathrm{d}\x+\int_{\Omega_2}|F_2(w^1_1,w^1_2)-F_2(w^2_1,w_2^2)|^2\mathrm{d}\x\\\nonumber&&\hspace{180 pt}\leq C(r)||\left\{w^1_1-w^2_1,w^1_2-w^2_2\right\}||^2_{H^2_T}
\end{eqnarray}
for all $||\left\{w^i_1,w^i_2\right\}||_{H^2_T}\leq r,\;i=1,2$. Also we assume that there exists such continuous functional $\Pi:H^2_{0}(\Omega)\rightarrow{\mathbb R}$ that 
\begin{eqnarray}
\label{Pi_1} && \frac{\mathrm{d}}{\mathrm{d}t}\Pi(w_1,w_2)=\int_{\Omega_1}F_1(w_1,w_2)w_{1,t}\mathrm{d}\x+\int_{\Omega_2}F_2(w_1,w_2)w_{2,t}\mathrm{d}\x,\\
&& \label{Pi_2} \Pi(w_1,w_2)\geq -C,\;\;\exists C>0, \\
&& \label{Pi_3} \Pi(w_1,w_2)\leq {\mathcal G}\left(||\left\{w_1,w_2\right\}||_{H^2_0(\Omega)}\right). 
\end{eqnarray}

The condition \eqref{Pi_1} holds for $\left\{w_1,w_2\right\}\in L^2(0,T;H^2_{0}(\Omega))$:$\left\{w_{1,t},w_{2,t}\right\}\in L^2((0,T)\times~\Omega)$; the conditions \eqref{Pi_2} and \eqref{Pi_3} hold for all $\left\{w_1,w_2\right\}\in H^2_{0}(\Omega)$. The scalar function $\mathcal{G}\;:\;\mathbb{R}^+\longrightarrow\mathbb{R}^+$ is supposed to be bounded on bounded intervals. The condition \eqref{Pi_1} also means that feedback forces $\left\{F_1(u,v),F_2(u,v)\right\}$ are potential, i.e., $\left\{F_1(u,v),F_2(u,v)\right\}$ is a Frechet derivative of $\Pi(u,v)$ (we denote it by $\Pi'_{\Phi}(u,v))$).  

For problem A $\Pi(w_1,w_2)=\frac{1}{4}M^2(w_1,w_2)$, for problem B 
$$\Pi(w_1,w_2)=\int_{\Omega_1}\int_{0}^{w_1(x)}f_1(s)\mathrm{d}s\mathrm{d}{\x}+\int_{\Omega_1}\int_{0}^{w_2(x)}f_2(s)\mathrm{d}s\mathrm{d}{\x}.$$

And for problem C if calculate $\mathcal{F}_i$ according to \eqref{karman}:
\begin{equation}\label{Pifk}
\Pi(w_1,w_2)=\frac{\gamma_1}{2}\int_{\Omega_1}|\Delta\mathcal{F}_1|^2\mathrm{d}\x+\frac{\gamma_2}{2}\int_{\Omega_2}|\Delta\mathcal{F}_2|^2\mathrm{d}\x.
\end{equation}

We denote by $H^{s}\equiv H^s(\Omega)$ a Hilbert space equipped with the norm $||\cdot||_s$. 

For von Karman case we need the following auxiliary result (which is the part of \cite[Proposition 6.1]{Manuscript} if $\gamma_1=\gamma_2$): 
\begin{lemma} \label{lfk}
Let $\mathcal{F}(w)$ be a solution to the problem \eqref{karman} for some $w=\left\{u,v\right\}$. Next statements hold true
\begin{eqnarray}
&||\left[w_1,w_2\right]||_{-2}\leq C||w_1||_{2-\beta}||w_2||_{1+\beta}\;\;\; \beta\in[0,1),\label{kb1}\\
&||\left[w_1,w_2\right]||_{-1-2\delta}\leq C||w_1||_{2-\delta}||w_2||_{2-\delta}\;\;\; \delta\in (0,1/2),\label{kb2}\\
&||\mathcal{F}(w_1)-\mathcal{F}(w_2)||_{2}\leq C||w_1+w_2||_{2-\beta}\cdot||w_1-w_2||_{1+\beta}\;\;\; \beta\in[0,1).\label{kb4}
\end{eqnarray}
\end{lemma}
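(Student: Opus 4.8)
The plan is to establish the three estimates \eqref{kb1}, \eqref{kb2}, \eqref{kb4} using the duality characterization of negative Sobolev norms together with the known multiplicative (Hölder–Sobolev) properties of the von Karman bracket $[\cdot,\cdot]$ on a two-dimensional domain. The key structural fact I would exploit is that the bracket is a sum of products of second derivatives, so $[\psi,\varphi]$ is bilinear and, crucially, can be written in divergence form: since $[\psi,\varphi]=\partial_{xx}(\psi_x\varphi_y-\cdots)$-type identities hold, pairing $[w_1,w_2]$ against a test function $\phi$ and integrating by parts moves derivatives off $w_1,w_2$ and onto $\phi$. This is exactly what lets one land in negative-order spaces.

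For \eqref{kb1} I would compute, for $\phi\in H^2_0(\Omega)$,
$$
\langle [w_1,w_2],\phi\rangle=\int_\Omega [w_1,w_2]\,\phi\,\mathrm{d}\x,
$$
integrate by parts twice so that two derivatives fall on $\phi$ and the remaining derivatives are distributed as one on $w_1$-type factors and three on $w_2$ (or the symmetric split dictated by $\beta$), and then apply Hölder's inequality combined with the Gagliardo–Nirenberg / Sobolev embeddings valid in $\mathbb R^2$ (in particular $H^{1+\beta}\hookrightarrow$ an appropriate $L^q$ and $H^{2-\beta}\hookrightarrow$ its Hölder conjugate space). Taking the supremum over $\|\phi\|_2\le 1$ yields the bound by $\|w_1\|_{2-\beta}\|w_2\|_{1+\beta}$; the restriction $\beta\in[0,1)$ is what keeps both Sobolev exponents in the admissible range. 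Estimate \eqref{kb2} is the same scheme but testing against $\phi\in H^{1+2\delta}$, redistributing derivatives symmetrically so that each factor carries the same smoothness $2-\delta$; here the constraint $\delta\in(0,1/2)$ guarantees the borderline two-dimensional embeddings do not degenerate.

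Estimate \eqref{kb4} is where the elliptic problem \eqref{karman} enters. I would first note that $\mathcal F(w)$ solves $\gamma\,\Delta^2\mathcal F(w)=-[w,w]$ with the transmission/clamped boundary data, so by elliptic regularity for this fourth-order transmission operator (the operator $A_0$ is an isomorphism $H^2_T\to H^{-2}_T$ by Theorem \ref{th1} and the equivalence $H^2_T=H^2_0(\Omega)$), one gets $\|\mathcal F(w)\|_2\le C\|[w,w]\|_{-2}$. Then, using bilinearity, $[w_1,w_1]-[w_2,w_2]=[w_1+w_2,\,w_1-w_2]$, so $\mathcal F(w_1)-\mathcal F(w_2)$ solves the same elliptic system with right-hand side $-[w_1+w_2,w_1-w_2]$, whence
$$
\|\mathcal F(w_1)-\mathcal F(w_2)\|_2\le C\|[w_1+w_2,w_1-w_2]\|_{-2}\le C\|w_1+w_2\|_{2-\beta}\|w_1-w_2\|_{1+\beta},
$$
the last step being exactly \eqref{kb1}. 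I expect the main obstacle to be the elliptic-regularity step underlying \eqref{kb4}: one must verify that the fourth-order transmission boundary conditions in \eqref{karman} are coercive/elliptic so that the a priori estimate $\|\mathcal F\|_2\le C\|[w,w]\|_{-2}$ genuinely holds across the interface $\Gamma_0$, rather than treating each subdomain separately. Since the bilinear reduction and the duality computations are by comparison routine, I would spend most of the care there, leaning on the stated isomorphism property of $A_0$ and the reference to \cite[Proposition 6.1]{Manuscript} for the $\gamma_1=\gamma_2$ prototype.
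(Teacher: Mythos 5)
Your treatment of \eqref{kb4} is exactly the paper's argument: polarize the bracket via $[w_1,w_1]-[w_2,w_2]=[w_1+w_2,w_1-w_2]$, invoke elliptic regularity for the fourth-order transmission problem (the paper cites \cite[Lemma 2.2a)]{mr} rather than rechecking coercivity across $\Gamma_0$) to get $||\mathcal{F}(w_1)-\mathcal{F}(w_2)||_{2}\leq C||[w_1+w_2,w_1-w_2]||_{-2}$, and then apply \eqref{kb1}. For \eqref{kb1} and \eqref{kb2} the paper simply cites \cite[Proposition 6.1]{Manuscript}, and your duality/divergence-form sketch is the standard argument underlying that citation, so the proposal is correct and takes essentially the same route.
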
 
\begin{proof}
The inequalities \eqref{kb1} and \eqref{kb2} are the same as (6.6) in \cite[Proposition 6.1]{Manuscript} with $j=2$ and (6.7) in \cite[Proposition 6.1]{Manuscript} (see also \cite[Subsection 1.4]{Karman}), respectively.
To prove \eqref{kb4} we need elliptic regularity for transmission problem formulated in \cite[Lemma 2.2a)]{mr} (see also \cite{Lada}): 
$$
||\mathcal{F}(w_1)-\mathcal{F}(w_2)||_{2}\leq C||\left[w_1+w_2,w_1-w_2\right]||_{-2}.
$$  
The application of \eqref{kb1} gives \eqref{kb4}. 
\end{proof} 
Lemma \ref{lfk} implies the following corollary (see proof of \cite[Lemma 6.2]{Manuscript}), which is necessary if apply the result about asymptotic smoothness to von Karman problem. 
\begin{corollary}
Consider any $w_1,w_2\in H^2_0(\Omega)$ such that $||w_i||_2\leq r$ and $\Pi$ calculated by \eqref{Pifk}. There exists such $\delta\in(0,1/2)$ that 
\begin{eqnarray}
&||\Pi'_{\Phi}(w_1)-\Pi'_{\Phi}(w_2)||\leq C(r)||w_1-w_2||_{2},\nonumber\\
&|\Pi(w_1)-\Pi(w_2)|\leq C(r)||w_1-w_2||_{2-\delta},\nonumber\\
&||\Pi'_{\Phi}(w_1)-\Pi'_{\Phi}(w_2)||_{-1-2\delta}\leq C(r)||w_1-w_2||_{2-\delta}.\nonumber
\end{eqnarray}
\end{corollary}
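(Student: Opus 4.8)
The plan is to derive the three estimates as direct consequences of Lemma \ref{lfk}, exploiting the bilinearity of the von Karman bracket and the fact that $\Pi'_{\Phi}(w)=-[w,\mathcal{F}(w)]$ is the Fréchet derivative computed from \eqref{Pifk}. First I would record the two algebraic identities that make the proof go: for the difference of the brackets,
\begin{equation}\nonumber
[w_1,\mathcal{F}(w_1)]-[w_2,\mathcal{F}(w_2)]=[w_1-w_2,\mathcal{F}(w_1)]+[w_2,\mathcal{F}(w_1)-\mathcal{F}(w_2)],
\end{equation}
and, since $\mathcal{F}(\cdot)$ is quadratic in $w$ (the bracket $[w,w]$ on the right of \eqref{karman} is bilinear), the Airy-function difference $\mathcal{F}(w_1)-\mathcal{F}(w_2)$ is controlled by \eqref{kb4}. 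Throughout, $\|w_i\|_2\le r$ gives uniform bounds on the Airy functions via elliptic regularity, so every factor $\|\mathcal{F}(w_i)\|_2$ is absorbed into the constant $C(r)$.

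For the first estimate I would choose $\beta=0$ in \eqref{kb1} and \eqref{kb4}. Applying \eqref{kb1} to each of the two terms in the splitting above gives a bound by $\|w_1-w_2\|_2\,\|\mathcal{F}(w_1)\|_2$ plus $\|w_2\|_2\,\|\mathcal{F}(w_1)-\mathcal{F}(w_2)\|_2$; then \eqref{kb4} turns the second Airy-difference factor into $C\|w_1+w_2\|_2\|w_1-w_2\|_1$, and bounding $\|\cdot\|_1\le\|\cdot\|_2$ yields the clean $C(r)\|w_1-w_2\|_2$. For the second estimate I would instead integrate: since $\Pi'_\Phi$ is the derivative of $\Pi$, write $\Pi(w_1)-\Pi(w_2)=\int_0^1\langle\Pi'_\Phi(w_2+t(w_1-w_2)),w_1-w_2\rangle\,dt$ and pair the $H^{-1-2\delta}$ norm coming from \eqref{kb2} against the $H^{1+2\delta}$ norm of $w_1-w_2$; interpolating $\|w_1-w_2\|_{1+2\delta}$ between $\|w_1-w_2\|_2$ (bounded by $r$) and a lower norm produces the gain to the $\|w_1-w_2\|_{2-\delta}$ exponent. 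The third estimate is the same bracket splitting as the first but measured in $H^{-1-2\delta}$: I apply \eqref{kb2} (with its free index $\delta\in(0,1/2)$) to each term and use \eqref{kb4} with a suitably small $\beta$ matched to $\delta$ so that one factor loses half a derivative, giving the $2-\delta$ power on the right.

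The main obstacle is bookkeeping the index choices so that all three inequalities hold for one common $\delta\in(0,1/2)$, as the statement demands. The tension is that \eqref{kb1} wants $\beta\in[0,1)$ while \eqref{kb2} fixes the output regularity at $-1-2\delta$ with $\delta<1/2$; I must pick $\beta$ in \eqref{kb4} close enough to $1$ that the Airy-difference term is measured in a norm no stronger than $\|w_1-w_2\|_{2-\delta}$, while keeping the fixed-$w$ factors under $r$. Once a single admissible $\delta$ (and the matching $\beta$) is fixed, the estimates follow mechanically; I do not expect any genuinely hard analysis beyond the interpolation inequality used in the middle estimate and the uniform elliptic bound $\|\mathcal{F}(w_i)\|_2\le C(r)$, both of which are standard and already implicit in Lemma \ref{lfk}.
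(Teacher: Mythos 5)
Your handling of the second and third estimates is sound and is essentially the intended argument: the bracket splitting $[w_1,\mathcal{F}(w_1)]-[w_2,\mathcal{F}(w_2)]=[w_1-w_2,\mathcal{F}(w_1)]+[w_2,\mathcal{F}(w_1)-\mathcal{F}(w_2)]$ combined with \eqref{kb2} and with \eqref{kb4} taken at $\beta=1-\delta$ does produce the $\|w_1-w_2\|_{2-\delta}$ bounds (and in the middle estimate you can even avoid interpolation by taking $\delta\le 1/3$, so that $1+2\delta\le 2-\delta$). The paper itself gives no written proof, deferring to \cite[Lemma 6.2]{Manuscript}, and that proof uses exactly your decomposition.

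The gap is in the first estimate. The unsubscripted norm there is the $L^2$ norm --- this inequality is precisely what verifies the local Lipschitz condition \eqref{lipsitz} for the von Karman problem, which is an $L^2$ bound --- but \eqref{kb1} controls only the $H^{-2}$ norm of a bracket, and \eqref{kb2} only the $H^{-1-2\delta}$ norm. No choice of $\beta$ or $\delta$ in Lemma \ref{lfk} upgrades this to $L^2$: for $u,F\in H^2(\Omega)$ in two dimensions the product $u_{xx}F_{yy}$ is a product of two functions that are merely in $L^2$, so $[u,F]$ is in general only in $L^1$, not $L^2$. Your claim that applying \eqref{kb1} with $\beta=0$ ``gives a bound by $\|w_1-w_2\|_2\|\mathcal{F}(w_1)\|_2$'' for the unsubscripted norm silently replaces $\|[\cdot,\cdot]\|_{-2}$ by $\|[\cdot,\cdot]\|_{L^2}$. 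The missing ingredient is the sharp regularity of the Airy stress function: the solution of \eqref{karman} satisfies $\|\mathcal{F}(w)\|_{W^{2,\infty}}\le C\|w\|_2^2$ (and correspondingly $\|\mathcal{F}(w_1)-\mathcal{F}(w_2)\|_{W^{2,\infty}}\le C\|w_1+w_2\|_2\|w_1-w_2\|_2$), a nontrivial compensated-compactness result contained in \cite[Proposition 6.1]{Manuscript} and \cite[Section 1.4]{Karman} but \emph{not} restated in Lemma \ref{lfk}. With it, each term of your splitting is a product of an $L^2$ second derivative and an $L^\infty$ second derivative, and the $L^2$ Lipschitz bound follows; without it, the first inequality of the corollary is not reachable from the three estimates you are working with.
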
 
Setting $w=(u,v,u_t,v_t,\theta)$ the origin equations \eqref{1}-\eqref{7} could be rewritten as the following Cauchy problem 
\begin{equation} \label{cauchy}
\frac{\mathrm{d}w}{\mathrm{d}t}=\Lambda w +\mathbb{F}(w),\;\;w|_{t=0}=w_0.
\end{equation}

Now we are in position to give a definition of a mild (according to \cite[Chapter 6]{Pazy}) solution to the problem \eqref{1}-\eqref{7}: 
\begin{definition} 
Consider any $T>0$. A solution $w\in C([0,T];\mathcal{H})$ of the integral equation
\begin{equation}\label {Diamel}
w(t)=e^{t\Lambda}w_0+\int\limits_{0}^{t}e^{(t-\tau)\Lambda}\mathbb{F}(w(\tau))\mathrm{d}t 
\end{equation}
is called a mild solution to the problem \eqref{1}-\eqref{7} on interval $[0,T]$ with initial condition $w(0)=w_0$.
\end{definition}

For future use we need energy functional (or Lyapunov function) $\mathcal{E}\;:\;\mathcal{H}\longrightarrow \mathbb{R}$ defined for an argument $w=(w_1,w_2,w_3,w_4,w_5)$ as follows
\begin{equation}\label{Lyap}
\mathcal{E}(w)=\frac{1}{2}\left[\int_{\Omega_1}\beta_1|\Delta w_1|^2+\rho_1|w_3|^2+\rho_0|w_5|^2\mathrm{d}\x+\int_{\Omega_2}\beta_2|\Delta w_2|^2+\rho_2|w_4|^2\mathrm{d}\x+2\Pi(w_1,w_2)\right]. 
\end{equation}

The well-posedness result is given by 
\begin{theorem}\label{wp}
Let \eqref{lipsitz},\eqref{Pi_1},\eqref{Pi_2} and \eqref{Pi_3} hold.
Next statements hold true:
\begin{list}{}{}
\item[{(i)}] For any initial $w_0\in \mathcal{H}$ and $T>0$ there exists a unique mild solution $w(t)\in C([0,T];\mathcal{H})$. Moreover, it satisfies energy equality
\begin{equation}\label{Eniq}
\mathcal{E}(w(T))-\mathcal{E}(w(t))=-\int_{t}^{T}\int_{\Omega_1}\beta_0|\nabla w_5|^2\mathrm{d}\x\mathrm{d}\tau-
\int_{t}^{T}\int_{\Gamma_1}\beta_0\lambda|w_5|^2\mathrm{d}\Gamma\mathrm{d}\tau
\end{equation}
for all $0\leq t\leq T$.
\item[{(ii)}] If $w^{1}_{0},w^{2}_{0}\in\mathcal{H}$ and $||w^{i}_{0}||_{\mathcal{H}}\leq R$ then there exists a constant $C_{R,T}>0$ such that 
\begin{equation}\label{cont}
||w^1(t)-w^2(t)||_{\mathcal{H}}\leq C_{R,T}||w^{1}_{0}-w^{2}_{0}||_{\mathcal{H}},\;\;t\in[0,T],
\end{equation}
where $w^1(t)$ and $w^2(t)$ are mild solutions with initial data $w^1_0$ and $w^2_0$, respectively.
\item[{(iii)}] if $w_0\in D(\Lambda)$ then the corresponding mild solution is strong, i.e. it is continuously differentiable, its values lie in $D(\Lambda)$ and it satisfies the equation 
$\frac{\mathrm{d}}{\mathrm{d}t}w=\Lambda w+\mathbb{F}(w)$ in $\mathcal{H}$ for almost all $t \in [0,T]$.
\end{list}  
\end{theorem}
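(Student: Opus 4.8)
The plan is to read \eqref{cauchy} as a semilinear Cauchy problem whose linear part is the contraction generator $\Lambda$ of Theorem \ref{th1}, perturbed by the nonlinearity $\mathbb{F}$, and to run the standard program of \cite[Chapter 6]{Pazy}. First I would establish local well-posedness. Since $\mathbb{A}^{-1}$ acts as multiplication by the bounded factors $1/\rho_i$, the hypothesis \eqref{lipsitz} transfers directly to $\mathbb{F}$, so that $w\mapsto\mathbb{F}(w)$ is Lipschitz on every ball of $\mathcal{H}$ with constant depending only on the radius. Combined with the contraction bound $\|e^{t\Lambda}\|_{\mathcal{H}}\le 1$, a Banach fixed-point argument for the Duhamel map on the right-hand side of \eqref{Diamel} yields, for each $w_0$, a unique mild solution on a maximal interval $[0,t_{\max})$, together with the usual blow-up alternative: either $t_{\max}=\infty$ or $\|w(t)\|_{\mathcal{H}}\to\infty$ as $t\uparrow t_{\max}$.

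The global existence and the energy equality are obtained together. For data $w_0\in D(\Lambda)$ the mild solution is strong (see the third step), so one may differentiate $\mathcal{E}(w(t))$ from \eqref{Lyap} along the trajectory. Inserting \eqref{1}--\eqref{3}, integrating by parts, and using the clamped conditions \eqref{4}, the thermal conditions \eqref{5}, and --- crucially --- the transmission conditions \eqref{6} encoded in $D(\Lambda)$, the coupling terms carrying $\mu$ cancel in pairs across $\Gamma_0$, while $\tfrac{\mathrm d}{\mathrm dt}\Pi$ exactly absorbs the contributions of $F_1,F_2$ through the potential identity \eqref{Pi_1}; what survives is the nonpositive thermal dissipation, giving \eqref{Eniq}. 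Since $\Pi\ge -C$ by \eqref{Pi_2}, the identity produces the a priori bound $\tfrac{c}{2}\|w(t)\|_{\mathcal{H}}^{2}\le \mathcal{E}(w(t))+C\le \mathcal{E}(w_0)+C$ with $c=\min(1,\rho_0,\rho_1,\rho_2)$, which is uniform in $t$ and thus excludes blow-up, forcing $t_{\max}=\infty$ and proving (i) for smooth data. Both the identity and the bound then extend to arbitrary $w_0\in\mathcal{H}$ by approximating $w_0$ in $\mathcal{H}$ by elements of the dense set $D(\Lambda)$ and passing to the limit with the continuity estimate established next.

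For (ii) I would subtract the integral equations \eqref{Diamel} for two solutions $w^1,w^2$ with $\|w^i_0\|_{\mathcal{H}}\le R$; on the invariant ball of radius $R$ furnished by the energy bound, $\mathbb{F}$ is Lipschitz with a fixed constant $L_R$, so using $\|e^{t\Lambda}\|\le 1$ one gets $\|w^1(t)-w^2(t)\|_{\mathcal{H}}\le \|w^1_0-w^2_0\|_{\mathcal{H}}+L_R\int_0^t\|w^1(\tau)-w^2(\tau)\|_{\mathcal{H}}\,\mathrm d\tau$, and Gronwall yields \eqref{cont} with $C_{R,T}=e^{L_RT}$. Finally, (iii) is the regularity statement of the semilinear theory: for $w_0\in D(\Lambda)$ and locally Lipschitz $\mathbb{F}$, \cite[Chapter 6]{Pazy} guarantees that the mild solution is continuously differentiable, stays in $D(\Lambda)$, and solves \eqref{cauchy} in $\mathcal{H}$ for a.e.\ $t$, which is precisely what legitimizes the differentiation performed in the second step.

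I expect the main obstacle to be the rigorous justification of the energy equality \eqref{Eniq} rather than the fixed-point or Gronwall arguments. The cancellation of the $\mu$-coupling terms hinges on carrying out the boundary integrations across the interface $\Gamma_0$ using exactly the transmission conditions \eqref{6} that define $D(\Lambda)$, and the passage from strong to mild solutions must be controlled through (ii); the treatment of the possibly non-smooth potential term via \eqref{Pi_1} (together with \eqref{Pi_2}--\eqref{Pi_3}) is the other delicate point.
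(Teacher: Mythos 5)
Your proposal follows essentially the same route as the paper: local existence and uniqueness via the semilinear semigroup theory of \cite[Chapter 6]{Pazy} (a fixed-point argument on the Duhamel formula \eqref{Diamel} using the contraction property of $e^{t\Lambda}$ and the local Lipschitz continuity of $\mathbb{F}$ from \eqref{lipsitz}), global extension through the energy identity and the lower bound \eqref{Pi_2}, a Gronwall argument for (ii), and the classical-solution theorem for (iii). The only difference is one of exposition: the paper outsources the energy equality, the a priori bound and the continuation step to a cited reference, whereas you sketch the derivation (strong solutions first, then density and the continuity estimate) explicitly, which is exactly the argument that reference carries out.
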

\begin{proof}
The existence of a local mild solution and its uniqueness follows from \cite[Theorem 6.1.4]{Pazy} (it is justified because, first, following Theorem \ref{th1}, $\Lambda$ is an infinitisimal operator of s.c. semigroup and, second, $\mathbb{F}$ is locally Lipschitz). We refer to \cite[Step II, proof of Theorem 2.1]{Potomkin} how to obtain that the solution could be extended on arbitrary long interval and the statement that for all $R>0$ there exists such $C_{R,T}$ that the following inequality holds
\begin{equation}\label{bounded}
||w(t)||_{\mathcal{H}}\leq C_{R,T}||w_0||_{\mathcal{H}}
\end{equation} 
for all $t\in [0,T]$.

To prove \eqref{cont} we use \eqref{Diamel}. Consider $\forall T>0,\;\;\forall t \in (0,T)$ and two mild solutions $w^1(t)$ and $w^2(t)$ with initial data $w^1$ and $w^2$, respectively. Assume also that $||w^i||_{\mathcal{H}}\leq R$. Then   
$$
||w^1(t)-w^2(t)||_{\mathcal{H}}\leq ||e^{t\Lambda}(w^1_0-w^2_0)||_{\mathcal{H}}
+\int_{0}^{t}||e^{(t-\tau)\Lambda}(\mathbb{F}(w^1(\tau))-\mathbb{F}(w^2(\tau)))||_{\mathcal{H}}\mathrm{d}\tau
$$
Using that $||e^{t\Lambda}||_{[\mathcal{H},\mathcal{H}]}\leq 1$, the inequality \eqref{bounded} and local Lipschitz property of $\mathcal{F}$ we obtain 
$$
||w^1(t)-w^2(t)||_{\mathcal{H}}\leq||w^1_0-w^2_0||_{\mathcal{H}}+C_{R,T}\int\limits_{0}^{t}||w^1(\tau)-w^2(\tau)||_{\mathcal{H}}\mathrm{d}\tau
$$  
Gronwall lemma gives \eqref{cont}. 

Last statement of the theorem (about strong solutions) follows directly from \cite[Theorem 6.1.5]{Pazy}. 
\end{proof}

\begin{remark} 
We say that a triple $(u,v,\theta)$ is a weak (variation) solution of \eqref{1}-\eqref{7} when 
$$
\begin{array}{c}
\left\{u,v\right\}\in L^{\infty}(0,T;H^2_T),\;\;\;\left\{u_t,v_t\right\}\in L^{\infty}(0,T;L^2(\Omega_1)\times L^2 (\Omega_2)) \\ 
\theta \in L^{\infty}(0,T;L^2(\Omega_1))\cap L^2(0,T;H^1_D)
\end{array}
$$ 
such that $u(\x,0)=u_0(\x)$, $v(\x,0)=v_0(\x)$ and  
\begin{eqnarray}
&& \int\limits_{0}^{T}\int_{\Omega_1}-\rho_1u_t\phi_{1,t}+\beta_1\Delta u\Delta \phi_1 +\mu \theta \Delta \phi_1+F_1(u,v) \phi_1\mathrm{d}\x\mathrm{d}t+\nonumber\\
&&+ \int\limits_{0}^{T}\int_{\Omega_2}-\rho_2 v_t \phi_{2,t} +\beta_2 \Delta v \Delta\phi_2+ F_2(u,v) \phi_2 \mathrm{d}\x\mathrm{d}t-\nonumber\\&&-\int_{\Omega_1}\rho_1 u_1\phi_1(0)\mathrm{d}\x-\int_{\Omega_2} \rho_2 v_1 \phi_2(0)\mathrm{d}\x+\nonumber\\
&&+ \int\limits_{0}^{T}\int_{\Omega_1}(-\rho_0\theta+\mu\Delta u)\phi_{3,t}+\beta_0\nabla \theta \nabla\phi_3\mathrm{d}\x\mathrm{d}t+\int\limits_{0}^{T}\int_{\Gamma_1}\beta_0\lambda \theta \phi_3\mathrm{d}\Gamma\mathrm{d}t-\nonumber\\&&-\int_{\Omega_1}(\rho_0\theta_0-\mu\Delta u_0)\phi_3(0)\mathrm{d}\x=0 \label{v2}
\end{eqnarray}
for all $\left\{\phi_1,\phi_2\right\}\in C^1(0,T;H^2_T)$ and $\phi_3\in C^1(0,T;H^1_D)$ such that 
$$
\phi_1(T)=\phi_2(T)=\phi_3(T)=0.
$$     

Any mild solution is a weak variation solution. To show this let us consider any initial $w_0=(u_0,v_0,u_1,v_1,\theta_0)\in \mathcal{H}$ and the corresponding mild solution $w(t)$. 
 Since $D(\Lambda)$ is dense in $\mathcal{H}$ we may choose a sequence $\left\{w_n\right\}\subset D(\Lambda)$ such that $w_n\rightarrow w_0$ in $\mathcal{H}$ and following \eqref{cont} we have $w_n(t)\rightarrow w(t)$ in $C([0,T];\mathcal{H})$. Equality \eqref{v2} holds for the corresponding strong solutions, besides there exists a constant $C_T$ such 
$$
\int\limits_{0}^{T}\int_{\Omega_1}\beta_0|\nabla \theta^n|^2\mathrm{d}\x\mathrm{d}t +
\int\limits_{0}^{T}\int_{\Gamma_1}\beta_0\lambda|\theta^n|^2\mathrm{d}\Gamma\mathrm{d}t\leq C_T.
$$   
It gives weak convergences $\nabla\theta^n\rightharpoonup\nabla\theta$ in $L^2([0,T]\times\Omega_1)$ and $\theta ^n\rightharpoonup \theta$ in $L^2([0,T]\times \Gamma_1)$. Now we may pass to limit $n\rightarrow\infty$ in \eqref{v2}. 
\end{remark}

\section {Key inequality}
In this section we obtain important auxiliary inequalities for the following system
\begin{eqnarray}
&&\rho_1 u_{tt}+\beta_1\Delta^2 u +\mu \Delta \theta=g_1(t,\x) \label {a1}\\
&&\rho_0 \theta_t-\beta_0\Delta \theta -\mu\Delta u_t =0\label{a2}\\ 
&&\rho_2 v_{tt}+\beta_2\Delta^2 v= g_2 (t,\x)\label{a3}
\end{eqnarray} 
with boundary conditions \eqref{4}-\eqref{6}. Here $g_i=g_i(t,\x)$ are functions from $L^2((0,T)\times\Omega_i)$ (below we write $g_i$ or $g_i(t)$).

Note that in the first theorem (Theorem \ref{aux_the}) we don't impose any conditions on $g_i$. All inequalities in this section hold on strong solutions.

Also we denote 
\begin{eqnarray}
\nonumber&&E(t)=\frac{1}{2}\int_{\Omega_1}\rho_1|u_t|^2+\beta_1|\Delta u|^2+\rho_0|\theta|^2\mathrm{d}\x\\\label{energy}&&\hspace{60 pt}+\frac{1}{2}
\int_{\Omega_2}\rho_2|v_t|^2+\beta_2|\Delta v|^2\mathrm{d}\x.
\end{eqnarray}

We introduce four auxiliary functionals and estimate their time derivative with solutions to \eqref{a1}-\eqref{a3} and \eqref{4}-\eqref{6} substituted. All functionals are typical for thermoelastic problems (see \cite{Avalos,Avalos2,ChuiBucci,Tynd,ChuiLasie,Karman,ChTynd,Giorgi,Lagnese,mr}; also in similar way as below the idea of scalar multiplicators $\phi_i$ and $\psi$ is applied in \cite{Tynd,mr}, without them all functionals had already been considered in \cite{Lagnese} for different plate models). Except some modifications in $J_1$ and $J_2$ our calculations are close to \cite[Section ~3]{mr}.  

As in \cite{mr} we use the following scalar functions 
$$
\phi_i(\x)=\left\{
\begin{array}{ll}
0, & \x\in U_{i\delta}(\Gamma_0)\cap\Omega_1,\\
1, & \x\in \Omega_1\setminus U_{2i\delta}(\Gamma_0)
\end{array}\right.,\;\;\phi_{i}\in C^2(\Omega_1),\;\;i=1,2, 
$$
and    
$$
\psi(\x)=\left\{
\begin{array}{ll}
1, & \x\in U_{4\delta}(\Omega_2),\\
0, & \x\in \Omega_1\setminus U_{8\delta}(\Omega_2)
\end{array}\right.,\;\;\psi\in C^2(\Omega). 
$$
Also we define $U_i=\left\{\x\in\Omega_1| \phi_i(\x)=1\right\}$ and $V=\left\{\x\in\Omega|\psi(\x)=1\right\}$. Besides, we use below a vector field $h=(h_1,h_2)\in C^2(\Omega)$ such that $h(\x)=-\nu(\x)$ if ${\x}\in\Gamma_1\cup\Gamma_2$ and a vector field $m(\x)=\x-\x_0$ where $\x_0$ is introduced in Introduction and it satisfies \eqref{geom1} and \eqref{geom2}. 

Main result of this subsection is formulated as follows 
\begin{theorem}\label{aux_the}
Let \eqref{imposion}, \eqref{geom1} and \eqref{geom2} hold.  
There exits such functional $R(t)$ and positive constants $k_0,C_0>0$ that 
\begin{equation}\label{Rb}
|R(t)|\leq C_0E(t)
\end{equation} 
and
\begin{eqnarray}
&&\nonumber\frac{\mathrm{d}}{\mathrm{d}t}R(t)\leq -k_0 E(t)+
C_0\left[\int_{\Omega_1}|\nabla\theta|^2\mathrm{d}\x+\int_{\Omega}|\left\{u,v\right\}|^2+|\Delta^{-1}_{D}\left\{\rho_1u_t,\rho_2v_t\right\}|^2\mathrm{d}\x\right]+\\
&&\label{Rmain0}\hspace{90 pt}+\int_{\Omega_1}g_1(t)\sigma_1(t)\mathrm{d}\x+\int_{\Omega_2}g_2(t)\sigma_2(t)\mathrm{d}\x
\end{eqnarray}
where 
$$\begin{array}{l} \left\{\sigma_1,\sigma_2\right\}=-q_1\Delta_D^{-1}\left\{\rho_0\phi_1\theta,0\right\}+q_2\left\{h\cdot\nabla u,h\cdot\nabla v\right\}+\\\hspace{80 pt}+q_3\left\{\phi_2u,0\right\}+q_4\left\{\psi m\cdot\nabla u,\psi m \cdot \nabla v\right\}
\end{array}$$
and $q_1,q_2,q_3$ and $q_4$ are positive numbers that will be specified later. 
\end{theorem}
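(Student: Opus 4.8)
The plan is to construct $R(t)$ as a weighted sum $R=q_1J_1+q_2J_2+q_3J_3+q_4J_4$ of four multiplier functionals, one attached to each of the four multipliers occurring in $\{\sigma_1,\sigma_2\}$, and then to fix the weights $q_i>0$ in a carefully ordered hierarchy so that the definite-sign contributions of the $\frac{\mathrm{d}}{\mathrm{d}t}J_i$ add up to $-k_0E(t)$, while everything else is either one of the lower-order quantities listed in \eqref{Rmain0} or the forcing integrals $\int g_i\sigma_i$. The starting point is the energy identity obtained by testing \eqref{a1}--\eqref{a3} with $(u_t,v_t,\theta)$, which reads $\frac{\mathrm{d}}{\mathrm{d}t}E(t)=-\int_{\Omega_1}\beta_0|\nabla\theta|^2\,\mathrm{d}\x-\int_{\Gamma_1}\beta_0\lambda|\theta|^2\,\mathrm{d}\Gamma+\int_{\Omega_1}g_1u_t\,\mathrm{d}\x+\int_{\Omega_2}g_2v_t\,\mathrm{d}\x$ and shows that the only intrinsic dissipation sits in the thermal variable on $\Omega_1$. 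The four functionals serve to transport this dissipation to the full mechanical energy. Since every inequality here is claimed on strong solutions, all integrations by parts below are legitimate.

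I would define the functionals so that, after substituting \eqref{a1}--\eqref{a3}, their derivatives reproduce exactly the four pieces of $\sigma_i$ tested against $g_i$. Thus $J_1=-\int_{\Omega_1}\rho_1u_t\,\Delta_D^{-1}(\rho_0\phi_1\theta)\,\mathrm{d}\x$ is the thermo-mechanical coupling functional, whose derivative turns the thermal dissipation into control of the kinetic energy $\int_{\Omega_1}\phi_1\rho_1|u_t|^2$ in the interior region $U_1$ through the coupling terms $\mu\Delta\theta$ and $\mu\Delta u_t$ (after using \eqref{a2}); $J_2=\int_{\Omega_1}\rho_1u_t(h\cdot\nabla u)\,\mathrm{d}\x+\int_{\Omega_2}\rho_2v_t(h\cdot\nabla v)\,\mathrm{d}\x$ is the Rellich--Pohozaev vector-field functional, which, because $h=-\nu$ on $\Gamma_1\cup\Gamma_2$, produces boundary terms of definite sign on the clamped outer boundary \eqref{4} together with a multiple of the mechanical energy; $J_3=\int_{\Omega_1}\rho_1\phi_2u\,u_t\,\mathrm{d}\x$ is the equipartition functional, trading the potential term $\int_{\Omega_1}\phi_2\beta_1|\Delta u|^2$ against $\int_{\Omega_1}\phi_2\rho_1|u_t|^2$; and $J_4=\int_{\Omega_1}\rho_1u_t(\psi m\cdot\nabla u)\,\mathrm{d}\x+\int_{\Omega_2}\rho_2v_t(\psi m\cdot\nabla v)\,\mathrm{d}\x$ is the radial multiplier localized by $\psi$ near $\Omega_2$ and $\Gamma_0$, which harvests the mechanical energy in the region carrying no thermal damping by means of \eqref{geom1}--\eqref{geom2}. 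The cutoffs $\phi_i,\psi$ are chosen precisely so that $J_1$ and $J_3$ generate no interface contribution on $\Gamma_0$; the commutator terms produced by $\nabla\phi_i$ and $\nabla\psi$ are supported in the transition layers and are absorbed into the lower-order remainder $\int_\Omega|\{u,v\}|^2+|\Delta_D^{-1}\{\rho_1u_t,\rho_2v_t\}|^2\,\mathrm{d}\x$.

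With the derivatives of the $J_i$ computed, I would sum them and fix the weights in order: first choose $q_4$ so that the radial functional dominates the energy near $\Gamma_0$ and in $\Omega_2$, then $q_2$ to control the complementary region and the outer boundary, then $q_3$ to complete the equipartition where $\phi_2=1$, and finally $q_1$ large to absorb, through the thermal transfer, the cross terms the previous three generate; the cost is paid with the available dissipation, which forces $\|\nabla\theta\|^2$ to reappear with a constant $C_0$ on the right of \eqref{Rmain0}. The thermal part $\frac12\rho_0\int|\theta|^2$ of $E(t)$ is itself controlled by $\|\nabla\theta\|^2_{L^2(\Omega_1)}$ via Poincar\'e's inequality (using $\theta=0$ on $\Gamma_0$), which is the reason $\|\nabla\theta\|^2$ is kept as a lower-order term rather than absorbed. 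Collecting the definite-sign contributions then yields \eqref{Rmain0}. The bound \eqref{Rb} is routine: each $J_i$ is an $L^2$-pairing of a velocity factor (bounded by $E^{1/2}$) with a factor bounded by $E^{1/2}$ through Poincar\'e's inequality, the equivalence $H^2_T=H^2_0(\Omega)$, and the smoothing of $\Delta_D^{-1}$, so $|J_i|\le CE$ and hence $|R|\le C_0E$.

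The main obstacle is the treatment of the interface integrals on $\Gamma_0$ produced by the integrations by parts in $J_2$ and $J_4$ (precisely the source of the modifications in $J_1$ and $J_2$ relative to \cite[Section 3]{mr}). Each of these functionals is integrated over both $\Omega_1$ and $\Omega_2$, so the $\Gamma_0$ boundary terms arrive from the two sides with opposite outward normals and must be recombined using the full set of transmission conditions \eqref{6}, namely $u=v$, $\frac{\partial u}{\partial\nu}=\frac{\partial v}{\partial\nu}$, $\beta_1\Delta u=\beta_2\Delta v$, and $\beta_1\frac{\partial\Delta u}{\partial\nu}+\mu\frac{\partial\theta}{\partial\nu}=\beta_2\frac{\partial\Delta v}{\partial\nu}$; the thermal flux $\mu\frac{\partial\theta}{\partial\nu}$ in the last identity is what couples this step back to $J_1$. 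After substitution the surviving interface quadratic form has to be shown to have the right sign, and this is exactly where the parameter restrictions \eqref{imposion} ($\rho_1\ge\rho_2$, $\beta_1\le\beta_2$) and the geometry \eqref{geom1} ($(\x-\x_0)\cdot\nu\ge\delta_0>0$ on $\Gamma_0$), together with \eqref{geom2}, enter. Verifying this sign while carrying along the extra thermal flux term is the delicate part of the argument.
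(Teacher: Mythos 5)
Your proposal follows essentially the same route as the paper: the paper also sets $R=\sum_{i=1}^{4}q_iJ_i$ with exactly these four multiplier functionals (the thermal one built from $\Delta_D^{-1}\left\{\rho_0\phi_1\theta,0\right\}$, the boundary vector field $h$, the cutoff multiplier $\phi_2u$, and the localized radial field $\psi\, m\cdot\nabla$), estimates each $\frac{\mathrm{d}}{\mathrm{d}t}J_i$ in separate lemmas, and then fixes the weights hierarchically (concretely $q_1=1$, $q_2=\eta/\min\left\{\beta_1,\beta_2\right\}$, $q_3=\mu/2-\eta C$, $q_4=\eta^{1/2}$ with $\eta$ small), with \eqref{imposion} and \eqref{geom1}--\eqref{geom2} entering through the $\Gamma_0$ terms of $J_4$ exactly as you describe. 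The one correction: to be consistent with your own design principle (and with $\sigma_2$), your $J_1$ must also carry the $\Omega_2$ integral $-\int_{\Omega_2}\rho_2v_tw_2\,\mathrm{d}\x$, where $\left\{w_1,w_2\right\}=\Delta_D^{-1}\left\{\rho_0\phi_1\theta,0\right\}$ is inverted on all of $\Omega$ and hence has a nonzero component $w_2$ on $\Omega_2$, as in the paper.
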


In order to obtain asymptotic smoothness in the Section 4 we need the following corollary.
\begin{corollary}\label{cor_1} 
Let \eqref{imposion}, \eqref{geom1} and \eqref{geom2} hold.
If 
\begin{equation}\label{g_bnd}
\int_{\Omega_1}|g_1|^2\mathrm{d}\x+\int_{\Omega_2}|g_2|^2\mathrm{d}\x\leq C E(t)
\end{equation}
then there exists $k,C>0$
\begin{equation}
\label{Rmain}\frac{\mathrm{d}}{\mathrm{d}t}R(t)\leq -k E(t)+
C\left[\int_{\Omega_1}|\nabla\theta|^2\mathrm{d}\x+\int_{\Omega}|\left\{u,v\right\}|^2+|\Delta^{-1}_{D}\left\{\rho_1u_t,\rho_2v_t\right\}|^2\mathrm{d}\x\right]
\end{equation}
where $R(t)$ is the same as in Theorem \ref{aux_the}.
\end{corollary}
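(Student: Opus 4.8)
The plan is to derive \eqref{Rmain} from \eqref{Rmain0} by absorbing the two forcing terms $\int_{\Omega_1}g_1\sigma_1\,\mathrm{d}\x$ and $\int_{\Omega_2}g_2\sigma_2\,\mathrm{d}\x$, since the right-hand sides of \eqref{Rmain0} and \eqref{Rmain} coincide apart from these. First I would record a normwise bound on $\sigma_1,\sigma_2$ by splitting $\sigma_i$ according to its four constituents. The term $\Delta_{D}^{-1}\{\rho_0\phi_1\theta,0\}$ maps into $H^1_D$, and using the Poincar\'e inequality available on $H^1_D$ (the temperature vanishes on $\Gamma_0$) it satisfies $\|\Delta_{D}^{-1}(\rho_0\phi_1\theta)\|_{L^2(\Omega_1)}\le C\|\theta\|_{L^2(\Omega_1)}\le C\|\nabla\theta\|_{L^2(\Omega_1)}$; the term $\phi_2 u$ is controlled by $\|u\|_{L^2(\Omega_1)}$; and the two first-order multiplier terms $h\cdot\nabla u$, $\psi m\cdot\nabla u$ (and their $v$-analogues) are controlled by $\|\nabla u\|_{L^2(\Omega_1)}$ and $\|\nabla v\|_{L^2(\Omega_2)}$, since $h,m,\psi,\phi_i$ are bounded $C^2$ fields.

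The delicate point is that \eqref{g_bnd} controls $g_i$ only by the full energy, $\|g_i\|_{L^2}\le C\sqrt{E(t)}$, while the gradient parts of $\sigma_i$ are themselves of order $\sqrt{E(t)}$; a bare Cauchy--Schwarz would therefore produce a term $C\,E(t)$ with an uncontrolled constant that cannot be absorbed into $-k_0E(t)$. To circumvent this I would use a two-scale estimate. Applying Young's inequality with a small parameter $\epsilon$ weighting the $g$-factor gives
$$\int_{\Omega_i}g_i\sigma_i\,\mathrm{d}\x\le \epsilon\int_{\Omega_i}|g_i|^2\,\mathrm{d}\x+\frac{1}{4\epsilon}\int_{\Omega_i}|\sigma_i|^2\,\mathrm{d}\x,$$
so that by \eqref{g_bnd} the first summand is $\le \epsilon C\,E(t)$, which is small for small $\epsilon$. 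In the second summand the lower-order constituents of $\sigma_i$ are harmless, since they feed $\|\nabla\theta\|^2$ and $\|u\|^2,\|v\|^2$, i.e. exactly the terms already present on the right of \eqref{Rmain}; but the first-order parts must be handled with the elliptic interpolation $\|\nabla u\|^2\le\eta\|\Delta u\|^2+C_\eta\|u\|^2$ (and likewise for $v$). Choosing $\eta\ll\epsilon$, for instance $\eta=\epsilon^2$, makes the resulting $\|\Delta u\|^2,\|\Delta v\|^2$ contributions carry a coefficient $O(\epsilon)$, absorbable into $E(t)$ because $\|\Delta u\|^2_{L^2(\Omega_1)}+\|\Delta v\|^2_{L^2(\Omega_2)}\le C\,E(t)$; the complementary $\|u\|^2,\|v\|^2$ terms acquire a large constant $C_\eta/\epsilon$, but this is allowed since the right-hand side of \eqref{Rmain} permits an arbitrary constant on the lower-order terms.

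Collecting the estimates, the two forcing terms contribute at most $\bigl(\epsilon C+O(\epsilon)\bigr)E(t)$ to the energy coefficient, together with a large-constant multiple of $\int_{\Omega_1}|\nabla\theta|^2+\int_\Omega|\{u,v\}|^2$. Fixing $\epsilon$ small enough that this coefficient falls below $k_0/2$ and inserting into \eqref{Rmain0} yields
$$\frac{\mathrm{d}}{\mathrm{d}t}R(t)\le -\frac{k_0}{2}E(t)+C\left[\int_{\Omega_1}|\nabla\theta|^2\mathrm{d}\x+\int_{\Omega}|\{u,v\}|^2+|\Delta_{D}^{-1}\{\rho_1u_t,\rho_2v_t\}|^2\mathrm{d}\x\right],$$
which is \eqref{Rmain} with $k=k_0/2$ and a suitably enlarged $C$; the term $|\Delta_{D}^{-1}\{\rho_1u_t,\rho_2v_t\}|^2$ is left untouched because $\sigma_i$ contains no velocity contribution. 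The one genuine obstacle is precisely the tension described above, namely that both $g_i$ and the transport multipliers $h\cdot\nabla u$, $\psi m\cdot\nabla u$ live at the full energy level, and the whole content of the corollary is that the two-parameter Young/interpolation trick resolves it by moving all non-absorbable mass onto the $L^2$ (lower-order) terms, where the size of the constant is irrelevant.
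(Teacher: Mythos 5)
Your proof is correct and is essentially the argument the paper intends: the corollary is stated without a written proof precisely because it follows from \eqref{Rmain0} by Cauchy--Schwarz/Young on the forcing terms, the bound $\|\sigma_i\|_{L^2}\le C(\|\nabla\theta\|+\|\{u,v\}\|_{H^1})$, and the interpolation $\|\{u,v\}\|_{H^1}^2\le\eta\|\Delta\{u,v\}\|^2+C_\eta\|\{u,v\}\|^2$ to push the first-order mass down to $L^2$, exactly as you describe. Your identification of the two-parameter absorption (first $\epsilon$ in Young against \eqref{g_bnd}, then $\eta\ll\epsilon$ in the interpolation) as the only nontrivial point is accurate.
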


To prove Theorem \ref{aux_the} we introduce four functionals $J_i,\;i=1,2,3,4$ and then we set 
$R=\sum\limits_{i=1}^{4}q_iJ_i$.
 
\noindent{\bf The functional $J_1$.} Let $\left\{w_1,w_2\right\}:=\Delta^{-1}_{D}\left\{\rho_0\phi_1\theta,0\right\}$ where $\Delta^{-1}_{D}$ is an inverse Laplace operator with Dirichlet boundary conditions on $\partial\Omega=\Gamma_1\cup\Gamma_2$. 
\begin{equation}
J_1:=-\int_{\Omega_1}\rho_1u_tw_1\mathrm{d}\x-\int_{\Omega_2}\rho_2v_tw_2\mathrm{d}\x 
\end{equation} 
Its derivative looks as follows 
\begin{eqnarray}
\nonumber&&\frac{\mathrm{d}}{\mathrm{d}t} J_1(t)=
\int_{\Omega_1}(\beta_1\Delta^2u+\mu\Delta \theta)w_1\mathrm{d}\x-\int_{\Omega_1}g_1(t)w_1\mathrm{d}\x
+\\
\nonumber&&\hspace{40 pt}+\int_{\Omega_2}(\beta_2\Delta^2v)w_2\mathrm{d}\x-\int_{\Omega_1}g_2(t)w_2\mathrm{d}\x
\\
\nonumber&&\hspace{40 pt}-\int_{\Omega}\left\{\rho_1u_t,\rho_2v_t\right\}\Delta^{-1}_{D}\left\{\phi_1\Delta(\beta_0\theta+\mu u_t),0\right\}\mathrm{d}\x
\end{eqnarray}
Using obvious equality $\phi_1\Delta \xi=\Delta (\phi_1 \xi)-\Delta \phi_1 \xi-2\nabla \phi_1\cdot\nabla \xi$ for $\xi=\beta_0\theta+\mu u_t$ we continue computations  
\begin{eqnarray}
\nonumber&&\frac{\mathrm{d}}{\mathrm{d}t} J_1(t)
=\int_{\Omega_1}\beta_1\Delta u\Delta w_1- \mu\nabla \theta\cdot\nabla w_1\mathrm{d}\x-\int_{\Gamma_1}\beta_1\Delta u\frac{\partial w_1}{\partial\nu}\mathrm{d}\Gamma-\int_{\Omega_1}g_1(t)w_1\mathrm{d}\x
\\\nonumber&&\hspace{40 pt}+
\int_{\Omega_2}\beta_2\Delta v\Delta w_2\mathrm{d}\x-\int_{\Gamma_2}\beta_2\Delta v\frac{\partial w_2}{\partial\nu}\mathrm{d}\Gamma-\int_{\Omega_2}g_2(t)w_2\mathrm{d}\x+\\
\nonumber&&\hspace{40 pt}
-\rho_1\beta_0\int_{\Omega_1}\phi_1u_t\theta\mathrm{d}\x
-\rho_1\mu\int_{\Omega_1}\phi_1|u_t|^2\mathrm{d}\x+
\\\nonumber&&\hspace{40 pt}+\int_{\Omega}\Delta^{-1}_{D}\left\{\rho_1u_t,\rho_2v_t\right\}\left\{\Delta\phi_1(\beta_0\theta+\mu u_t),0\right\}\mathrm{d}\x- 
\\\nonumber&&\hspace{40 pt}
-2\int_{\Omega}\left\{\beta_0\theta+\mu u_t,0\right\}\left\{\Delta \phi_1,0\right\}
\Delta^{-1}_{D}\left\{\rho_1u_t,\rho_1v_t\right\}\mathrm{d}\x-
\\\nonumber&&\hspace{40 pt}
-2\int_{\Omega}\left\{\beta_0\theta+\mu u_t,0 \right\}\left(\nabla\left\{\phi_1,0\right\}\cdot\nabla\Delta^{-1}_{D}\left\{\rho_1 u_t,\rho_2 v_t\right\}\right)\mathrm{d}\x.
\end{eqnarray} 
We mention two useful estimates. 
First, 
$$
||\left\{w_1,w_2\right\}||^2_{H^2(\Omega)}+\int_{\Gamma_1}\left|\frac{\partial w_1}{\partial\nu}\right|^2\mathrm{d}\Gamma+\int_{\Gamma_2}\left|\frac{\partial w_2}{\partial \nu}\right|^2\mathrm{d}\Gamma
\leq C\int_{\Omega_1}|\theta|^2\mathrm{d}\x
$$
Second, for any $\eta_0>0$
$$
\int_{\Omega}|\nabla\Delta^{-1}_{D}\left\{\rho_1u_t,\rho_2v_t\right\}|^2\mathrm{d}\x\leq
\eta_0\int_{\Omega}|\left\{u_t,v_t\right\}|^2\mathrm{d}\x+C_{\eta_0}\int_{\Omega}|\Delta^{-1}_{D}\left\{\rho_1u_t,\rho_2v_t\right\}|^2\mathrm{d}\x
$$ 
Finally, we have 
\begin{lemma} \label {auxl1} There holds 
\begin{eqnarray}
&&\frac{\mathrm{d}}{\mathrm{d}t}J_1(t)\leq 
\eta\left\{\int_{\Gamma_1}|\Delta u|^2\mathrm{d}\Gamma+\int_{\Gamma_2}|\Delta v|^2\mathrm{d}\Gamma
\right\}+
\nonumber \\&&\hspace{40 pt}
+\eta\left\{\int_{\Omega_1}|\Delta u|^2+|u_t|^2\mathrm{d}\x+\int_{\Omega_2}|\Delta v|^2+|v_t|^2\mathrm{d}\x\right\}+\nonumber \\
&&\hspace{40 pt}+C_{\eta}\left[\int_{\Omega_1}|\nabla \theta|^2\mathrm{d}\x+\int_{\Omega}|\Delta^{-1}_{D}\left\{\rho_1u_t,\rho_2v_t\right\}|^2\mathrm{d}\x\right]- \mu\rho_1\int_{U_1}|u_t|^2\mathrm{d}\x-\nonumber
\\&&\hspace{40 pt} -\int_{\Omega_1}g_1(t)w_1\mathrm{d}\x-\int_{\Omega_2}g_2(t)w_2\mathrm{d}\x,\label{j_1}
\end{eqnarray}
where $\left\{w_1,w_2\right\}:=\Delta^{-1}_{D}\left\{\rho_0\phi_1\theta,0\right\}$.
\end{lemma}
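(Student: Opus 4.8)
The plan is to start from the fully expanded expression for $\frac{\mathrm{d}}{\mathrm{d}t}J_1(t)$ derived just above and to bound every term on its right-hand side, keeping only the one sign-definite contribution and the two forcing integrals $\int_{\Omega_1}g_1 w_1$ and $\int_{\Omega_2}g_2 w_2$ intact. The main tools are Young's inequality with a small parameter $\eta$, the two displayed auxiliary estimates immediately preceding the lemma, and the Poincaré inequality: since $\theta\in H^1_D$ vanishes on $\Gamma_0$, one has $\|\theta\|_{L^2(\Omega_1)}\le C\|\nabla\theta\|_{L^2(\Omega_1)}$, which is exactly what lets me replace every occurrence of $\int_{\Omega_1}|\theta|^2$ coming from the first auxiliary estimate by $\int_{\Omega_1}|\nabla\theta|^2$, matching the form of \eqref{j_1}.

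Concretely, for the interior elliptic terms $\int_{\Omega_1}\beta_1\Delta u\,\Delta w_1$ and $\int_{\Omega_2}\beta_2\Delta v\,\Delta w_2$ I would apply Young to split off $\eta\int_{\Omega_1}|\Delta u|^2+\eta\int_{\Omega_2}|\Delta v|^2$ and control the remaining factors $\|\Delta w_1\|^2,\|\Delta w_2\|^2\le\|\{w_1,w_2\}\|_{H^2(\Omega)}^2$ by the first auxiliary estimate and then Poincaré, producing a $C_\eta\int_{\Omega_1}|\nabla\theta|^2$ term. The gradient term $\mu\int_{\Omega_1}\nabla\theta\cdot\nabla w_1$ is handled the same way, and the two boundary integrals over $\Gamma_1$ and $\Gamma_2$ are estimated by Young to yield $\eta\int_{\Gamma_1}|\Delta u|^2+\eta\int_{\Gamma_2}|\Delta v|^2$ plus boundary norms of $\partial w_i/\partial\nu$ that are again absorbed through the first auxiliary estimate into $C_\eta\int_{\Omega_1}|\nabla\theta|^2$. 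The coupling term $\rho_1\beta_0\int_{\Omega_1}\phi_1 u_t\theta$ splits under Young into $\eta\int_{\Omega_1}|u_t|^2$ and $C_\eta\int_{\Omega_1}|\nabla\theta|^2$.

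The sign-definite term is $-\rho_1\mu\int_{\Omega_1}\phi_1|u_t|^2$; since $0\le\phi_1\le 1$ and $\phi_1\equiv 1$ on $U_1$, I bound it above by $-\rho_1\mu\int_{U_1}|u_t|^2$, which is precisely the negative contribution retained in \eqref{j_1} (to be exploited later when assembling $R=\sum q_iJ_i$). The three remaining nonlocal terms -- those carrying $\Delta\phi_1$, $\nabla\phi_1$, and the factor $\Delta_D^{-1}\{\rho_1 u_t,\rho_2 v_t\}$ -- are where the compactness structure enters: using boundedness of $\phi_1\in C^2$ together with Young, the two terms with $\Delta\phi_1$ give $\eta\int_{\Omega_1}(|\theta|^2+|u_t|^2)+C_\eta\int_{\Omega}|\Delta_D^{-1}\{\rho_1 u_t,\rho_2 v_t\}|^2$, while the term with $\nabla\phi_1\cdot\nabla\Delta_D^{-1}\{\rho_1 u_t,\rho_2 v_t\}$ is first reduced by Young to $\eta\int_{\Omega_1}|\beta_0\theta+\mu u_t|^2+C_\eta\int_{\Omega}|\nabla\Delta_D^{-1}\{\rho_1 u_t,\rho_2 v_t\}|^2$, after which the second auxiliary estimate converts the gradient of the inverse Laplacian into $\eta_0\int_{\Omega}|\{u_t,v_t\}|^2+C_{\eta_0}\int_{\Omega}|\Delta_D^{-1}\{\rho_1 u_t,\rho_2 v_t\}|^2$.

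Collecting all contributions and relabeling the various small constants as a single $\eta$ and the various large constants as $C_\eta$ then yields \eqref{j_1}. I expect the main obstacle to be the careful bookkeeping of these last three nonlocal terms: one must ensure that the gradient of $\Delta_D^{-1}\{\rho_1 u_t,\rho_2 v_t\}$ is never left uncontrolled but is always routed through the second auxiliary estimate, so that it contributes only an absorbable multiple of $\int_{\Omega}|\{u_t,v_t\}|^2$ together with the lower-order term $\int_{\Omega}|\Delta_D^{-1}\{\rho_1 u_t,\rho_2 v_t\}|^2$, which is exactly the quantity the compactness argument of Section~4 will later handle.
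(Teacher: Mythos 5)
Your proposal is correct and follows essentially the same route as the paper: the paper's own argument consists precisely of the displayed expansion of $\frac{\mathrm{d}}{\mathrm{d}t}J_1$, the two auxiliary estimates stated just before the lemma, Young's inequality with a small parameter, and the Poincar\'e (Friedrichs) inequality for $\theta\in H^1_D$ to trade $\int_{\Omega_1}|\theta|^2$ for $\int_{\Omega_1}|\nabla\theta|^2$. Your handling of the sign-definite term via $0\le\phi_1\le1$, $\phi_1\equiv1$ on $U_1$, and your routing of the $\nabla\Delta_D^{-1}$ term through the second auxiliary estimate with $\eta_0$ chosen relative to $C_\eta$, match what the paper leaves implicit.
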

\noindent{\bf The functional $J_2$.}
Let us consider a vector field $h=(h_1,h_2)\in [C^2(\Omega)]^2$ such that $
h(\x)=
-\nu(\x)$ if $\x\in\Gamma_1\cup\Gamma_2$
then we introduce 
$$
J_2:=\int_{\Omega_1}\rho_1 u_th\cdot\nabla u\mathrm{d}\x +\int_{\Omega_2}\rho_2v_th\cdot\nabla v\mathrm{d}\x
$$
Following the same procedure as in \cite[Lemma 3.5]{mr} we have 
\begin{lemma} \label{auxl2}
There holds 
\begin{eqnarray}
&&\frac{\mathrm{d}}{\mathrm{d}t}J_2\leq
-\frac{\beta_1}{2}\int_{\Gamma_1}|\Delta u|^2\mathrm{d}\Gamma-\frac{\beta_2}{2}\int_{\Gamma_2}|\Delta v|^2\mathrm{d}\Gamma-
\int_{\Omega_1}g_1(t)h\cdot\nabla u\mathrm{d}\x-\int_{\Omega_2}g_2(t)h\cdot\nabla v\mathrm{d}\x\nonumber\\
&&\hspace{40 pt}+
C\left\{\int_{\Omega_1}|\Delta u|^2+|u_t|^2+|\nabla\theta|^2\mathrm{d}\x+\int_{\Omega_2}|\Delta v|^2+|v_t|^2\mathrm{d}\x\right\}.\label{j_2}
\end{eqnarray}
\end{lemma}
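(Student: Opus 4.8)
The plan is to differentiate $J_2$ along a strong solution of \eqref{a1}--\eqref{a3} and reduce the result to \eqref{j_2} by a clamped-plate multiplier (Rellich--Pohozaev) computation carried out separately on $\Omega_1$ and $\Omega_2$. Differentiating gives $\frac{\mathrm d}{\mathrm dt}J_2=\int_{\Omega_1}(\rho_1 u_{tt}\,h\cdot\nabla u+\rho_1 u_t\,h\cdot\nabla u_t)\,\mathrm d\x$ plus the identical expression over $\Omega_2$ with $(\rho_2,v)$ in place of $(\rho_1,u)$. Into the acceleration terms I would substitute $\rho_1 u_{tt}=-\beta_1\Delta^2 u-\mu\Delta\theta+g_1$ and $\rho_2 v_{tt}=-\beta_2\Delta^2 v+g_2$ from \eqref{a1} and \eqref{a3}; the source parts produce exactly $-\int_{\Omega_1}g_1\,h\cdot\nabla u-\int_{\Omega_2}g_2\,h\cdot\nabla v$, which survive verbatim in \eqref{j_2}. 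The velocity terms are rewritten as $\rho_1 u_t\,h\cdot\nabla u_t=\tfrac{\rho_1}{2}\,h\cdot\nabla(|u_t|^2)$ and integrated by parts, yielding an interior contribution bounded by $C\int_{\Omega_1}|u_t|^2\,\mathrm d\x$ and a boundary contribution proportional to $(h\cdot\nu)|u_t|^2$ that I would defer to the interface analysis.

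Next I would handle the bending terms $-\beta_1\int_{\Omega_1}\Delta^2u\,(h\cdot\nabla u)\,\mathrm d\x$ and its $\Omega_2$ analogue by integrating by parts twice. The interior remainder is a quadratic form in $\Delta u$ and the second derivatives of $u$, hence bounded by $C\int_{\Omega_1}|\Delta u|^2\,\mathrm d\x$ thanks to the $H^2_T$ norm equivalence recorded in Section~2. On $\Gamma_1$ the clamped conditions \eqref{4} give $\nabla u=(\partial_\nu u)\nu=0$, so all boundary terms except the one quadratic in $\Delta u$ vanish, and the choice $h=-\nu$ (whence $h\cdot\nu=-1$) collapses the remaining term to the good dissipative term $-\tfrac{\beta_1}{2}\int_{\Gamma_1}|\Delta u|^2$; the same computation on $\Gamma_2$ produces $-\tfrac{\beta_2}{2}\int_{\Gamma_2}|\Delta v|^2$. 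The thermal coupling $-\mu\int_{\Omega_1}\Delta\theta\,(h\cdot\nabla u)\,\mathrm d\x$ is integrated by parts once: its interior part is bounded by $C\int_{\Omega_1}(|\nabla\theta|^2+|\Delta u|^2)\,\mathrm d\x$ (again using that $\nabla(h\cdot\nabla u)$ is controlled by $\|\Delta u\|$), while its boundary part vanishes on $\Gamma_1$ because $h\cdot\nabla u=0$ there and leaves only a flux term $\mu\,\partial_\nu\theta\,(h\cdot\nabla u)$ on $\Gamma_0$.

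The decisive and most delicate step is the interface $\Gamma_0$, across which \eqref{j_2} must carry no residual term. I would gather all $\Gamma_0$ integrals simultaneously: the two kinetic terms, the bending flux and torque terms from both subdomains, and the stray thermal flux $\mu\,\partial_\nu\theta\,(h\cdot\nabla u)$. Since the outward normal to $\Omega_1$ on $\Gamma_0$ is $-\nu$ while that of $\Omega_2$ is $\nu$, I would match the two sides using the transmission conditions \eqref{6} together with their time and tangential consequences, namely $u_t=v_t$, $\nabla u=\nabla v$, $\beta_1\Delta u=\beta_2\Delta v$, and $\beta_1\partial_\nu\Delta u+\mu\partial_\nu\theta=\beta_2\partial_\nu\Delta v$ on $\Gamma_0$. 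The last of these is exactly what absorbs the thermal flux $\mu\,\partial_\nu\theta$ into the bending flux terms, so that $\theta$ disappears from $\Gamma_0$ (consistent with $\theta=0$ there by \eqref{5}). After the matching, the net $\Gamma_0$ contribution is a combination governed by the signs of $\rho_2-\rho_1$, $\beta_1-\beta_2$, and $h\cdot\nu$ on $\Gamma_0$; choosing the field $h$ so that $h\cdot\nu\ge 0$ on $\Gamma_0$ and invoking the parameter restriction \eqref{imposion} renders this combination nonpositive, so it may simply be dropped. Verifying that every interface integral pairs off under \eqref{6} and that the surviving combination has the correct sign under \eqref{imposion} is the main obstacle; this is precisely where the estimate departs from a single-domain multiplier argument and follows the transmission computation of \cite[Lemma~3.5]{mr}.
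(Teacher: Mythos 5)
Your computation is the one the paper intends: the paper gives no proof of Lemma \ref{auxl2} beyond the remark that one follows ``the same procedure as in \cite[Lemma 3.5]{mr}'', and that procedure is precisely the $h\cdot\nabla u$ multiplier argument you carry out --- differentiate $J_2$, substitute \eqref{a1} and \eqref{a3}, integrate by parts twice in the biharmonic terms, use the clamped conditions and $h=-\nu$ to collapse the $\Gamma_1,\Gamma_2$ boundary terms to $-\tfrac{\beta_1}{2}\int_{\Gamma_1}|\Delta u|^2$ and $-\tfrac{\beta_2}{2}\int_{\Gamma_2}|\Delta v|^2$, and absorb the interior remainders (which involve the full Hessian, not just $\Delta u$) via the $H^2_T\simeq H^2_0(\Omega)$ norm equivalence. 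The one place you diverge is the interface. You keep $h$ nonzero on $\Gamma_0$ and argue that, after the flux terms pair off through the fourth condition in \eqref{6} (which does cancel the stray $\mu\,\partial_\nu\theta$ term exactly), the residual reduces to $\Gamma_0$-integrals proportional to $(h\cdot\nu)(\rho_1-\rho_2)|u_t|^2$ and $(h\cdot\nu)(\beta_2-\beta_1)|\Delta u|^2$, nonpositive when $h\cdot\nu\ge0$ there and \eqref{imposion} holds. This is correct, but the step you leave implicit is the crux: the only second derivative not matched across $\Gamma_0$ is $\partial_\nu^2$, and its jump equals $\Delta u-\Delta v=\Delta u\,(\beta_2-\beta_1)/\beta_2$ by \eqref{6}, which is what turns the mismatch in $\partial_\nu(h\cdot\nabla u)$ into a multiple of $(h\cdot\nu)|\Delta u|^2$. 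Your version also makes the lemma depend on \eqref{imposion} and on a sign condition on $h|_{\Gamma_0}$ that the paper never imposes, whereas \eqref{j_2} is stated unconditionally. The cleaner route, consistent with \cite{mr} and with the division of labour in this paper (the signed $\Gamma_0$ traces are the job of $J_4$, not $J_2$), is to take $h\equiv0$ in a neighbourhood of $\Gamma_0$: then every interface integral vanishes identically and the delicate interface algebra is needed only once, in Lemma \ref{auxl4}.
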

\noindent{\bf The functional $J_3$.}
Let us introduce 
$$
J_3(t)=\int_{\Omega_1}\rho_1 u_t \phi_2 u\mathrm{d}\x
$$
We have 
\begin{lemma}\label{auxl3}
There holds 
\begin{eqnarray}
&&\frac{\mathrm{d}}{\mathrm{d}t}J_3(t)\leq 
-\frac{\beta_1}{2}\int_{U_2}|\Delta u|^2\mathrm{d}x+\rho_1 \int_{\Omega_1}|u_t|^2\mathrm{d}\x-\int_{\Omega_1}g_1(t)\phi_2u\mathrm{d}\x+\nonumber\\
&&\hspace{40 pt}+
\eta\left\{\int_{\Omega_1}|\Delta u|^2\mathrm{d}\x+\int_{\Omega_2}|\Delta v|^2\mathrm{d}\x\right\}+
C_{\eta}\left\{\int_{\Omega_1}|u|^2+|\nabla \theta|^2\mathrm{d}\x+\int_{\Omega_2}|v|^2\mathrm{d}\x\right\}.\label{j_3}
\end{eqnarray}
\end{lemma}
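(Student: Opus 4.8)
The plan is to differentiate $J_3$ in time, substitute the evolution equation \eqref{a1} for $\rho_1u_{tt}$, and integrate the resulting biharmonic and thermal terms by parts, exploiting that the cutoff $\phi_2$ vanishes identically near $\Gamma_0$ to discard every transmission contribution there. First I would compute
$$
\frac{\mathrm{d}}{\mathrm{d}t}J_3(t)=\int_{\Omega_1}\rho_1 u_{tt}\,\phi_2 u\,\mathrm{d}\x+\rho_1\int_{\Omega_1}\phi_2|u_t|^2\,\mathrm{d}\x,
$$
and bound the last integral by $\rho_1\int_{\Omega_1}|u_t|^2\,\mathrm{d}\x$ using $0\le\phi_2\le1$, which already yields the stated $u_t$ term. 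Into the first integral I substitute $\rho_1u_{tt}=g_1-\beta_1\Delta^2u-\mu\Delta\theta$; the forcing then produces the integral $\int_{\Omega_1}g_1\phi_2 u\,\mathrm{d}\x$, carried along unestimated as the inhomogeneous term of \eqref{j_3}.

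The core is the biharmonic term $-\beta_1\int_{\Omega_1}\Delta^2u\,\phi_2 u\,\mathrm{d}\x$. Integrating by parts twice I would check that all boundary integrals on $\partial\Omega_1=\Gamma_0\cup\Gamma_1$ vanish: on $\Gamma_1$ the clamped conditions \eqref{4} give $u=\partial u/\partial\nu=0$, so $\phi_2u$ and its normal derivative vanish; on $\Gamma_0$ the cutoff $\phi_2$ and all its derivatives vanish identically. This reduces the term to $-\beta_1\int_{\Omega_1}\Delta u\,\Delta(\phi_2 u)\,\mathrm{d}\x$. Expanding $\Delta(\phi_2u)=\phi_2\Delta u+2\nabla\phi_2\cdot\nabla u+u\Delta\phi_2$ isolates the principal piece $-\beta_1\int_{\Omega_1}\phi_2|\Delta u|^2\,\mathrm{d}\x$, which, since $\phi_2\equiv1$ on $U_2$ and $\phi_2\ge0$, is at most $-\beta_1\int_{U_2}|\Delta u|^2\,\mathrm{d}\x$; the two commutator pieces carry $\nabla\phi_2$ or $\Delta\phi_2$ and are controlled by Young's inequality as $\varepsilon\int_{\Omega_1}|\Delta u|^2+C_\varepsilon\bigl(\int_{\Omega_1}|\nabla u|^2+\int_{\Omega_1}|u|^2\bigr)$. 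The thermal term $-\mu\int_{\Omega_1}\Delta\theta\,\phi_2u\,\mathrm{d}\x$ is integrated by parts once (the boundary term vanishing for the same two reasons) into $\mu\int_{\Omega_1}\nabla\theta\cdot(\phi_2\nabla u+u\nabla\phi_2)\,\mathrm{d}\x$ and bounded by $C_\eta\int_{\Omega_1}|\nabla\theta|^2+\eta\int_{\Omega_1}|\nabla u|^2+C\int_{\Omega_1}|u|^2$.

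The one point requiring care is the leftover first-order quantity $\int_{\Omega_1}|\nabla u|^2$, which is of lower order than $\Delta u$ but is not directly present on the right-hand side of \eqref{j_3}. Here I would invoke the interpolation inequality
$$
\int_{\Omega_1}|\nabla u|^2\,\mathrm{d}\x\le\eta\Bigl(\int_{\Omega_1}|\Delta u|^2\,\mathrm{d}\x+\int_{\Omega_2}|\Delta v|^2\,\mathrm{d}\x\Bigr)+C_\eta\Bigl(\int_{\Omega_1}|u|^2\,\mathrm{d}\x+\int_{\Omega_2}|v|^2\,\mathrm{d}\x\Bigr),
$$
which is legitimate precisely because $\{u,v\}\in H^2_T=H^2_0(\Omega)$, so the interpolation is carried out for the global clamped function on $\Omega$ rather than on $\Omega_1$ alone, where $u$ is unconstrained along $\Gamma_0$. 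This is exactly what brings the $\Omega_2$-terms $\int_{\Omega_2}|\Delta v|^2$ and $\int_{\Omega_2}|v|^2$ into the estimate, even though $J_3$ involves only $u$.

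Finally, choosing the fixed parameter $\varepsilon$ small enough to absorb the commutator $\varepsilon\int_{\Omega_1}|\Delta u|^2$ into the principal term $-\beta_1\int_{U_2}|\Delta u|^2$ degrades its coefficient to $-\tfrac{\beta_1}{2}$, and gathering all remaining contributions under a single arbitrary $\eta$ together with the matching $C_\eta$ gives \eqref{j_3}. The main obstacle is therefore not any single inequality but the bookkeeping of the commutator terms together with the correct use of the \emph{global} interpolation inequality; notably, no transmission identities \eqref{6} or geometric hypotheses \eqref{geom1}--\eqref{geom2} are needed for $J_3$, since $\phi_2$ localizes the entire computation to the interior of $\Omega_1$ away from $\Gamma_0$.
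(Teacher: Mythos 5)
Your proof is correct and is precisely the standard multiplier computation that the paper leaves implicit for $J_3$ (it only cites \cite{Lagnese} and \cite{mr} instead of writing it out): the localization by $\phi_2$ killing all $\Gamma_0$ and $\Gamma_1$ boundary terms, the two commutator terms from $\Delta(\phi_2 u)$, and the reintroduction of the $\Omega_2$-integrals through the global interpolation on $H^2_T=H^2_0(\Omega)$ are exactly the intended ingredients. One small remark: direct substitution of \eqref{a1} produces $+\int_{\Omega_1}g_1\phi_2 u\,\mathrm{d}\x$, as you obtain, rather than the minus sign printed in \eqref{j_3}; your sign is the one consistent with the coefficient $+q_3\left\{\phi_2 u,0\right\}$ in $\sigma_1$ of Theorem \ref{aux_the} (the same discrepancy occurs in \eqref{j_2} and \eqref{j_4}), and it is immaterial downstream since these terms enter $\Psi_T$ only through absolute values.
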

\noindent{\bf The functional $J_4$.}
We recall that $m(\x)=\x-\x_0$ and it satisfies \eqref{geom1} and \eqref{geom2}. We introduce 
$$
J_4=\int_{\Omega_1}\rho_1 u_t \psi m\cdot\nabla u\mathrm{d}\x+\int_{\Omega_2}\rho_2v_t\psi m\cdot\nabla v \mathrm{d}\x
$$
repeating the similar calculations as in \cite[Lemma 3.7]{mr} we obtain 
\begin{lemma}\label{auxl4}
There holds 
\begin{eqnarray}
&&\frac{\mathrm{d}}{\mathrm{d}t}J_4(t)\leq 
\eta\left(\int_{\Omega_1}|\Delta u|^2\mathrm{d}\x+\int_{\Omega_2}|\Delta v|^2\mathrm{d}\x\right)+
C_{\eta}\left(\int_{\Omega_1}|u|^2+|\nabla \theta|^2\mathrm{d}\x+\int_{\Omega_2}|v|^2\mathrm{d}\x\right)-\nonumber\\
&&\hspace{40 pt}-\frac{\rho_1-\rho_2}{2}\int_{\Gamma_0}m\cdot\nu|u_t|^2\mathrm{d}\Gamma-
\frac{\beta_2-\beta_1}{2}\int_{\Gamma_0}m\cdot\nu|\Delta u|^2\mathrm{d}\Gamma-\nonumber\\&&\hspace{40 pt}-\int_{\Omega_1}g_1(t)\psi m\cdot\nabla u\mathrm{d}\x-\int_{\Omega_2}g_2(t)\psi m\cdot\nabla v \mathrm{d}\x +\nonumber\\  
&&\hspace{40 pt}+C \int_{U_2}|u_t|^2+|\Delta u|^2\mathrm{d}\x-{\beta_1}\int_{V\cap\Omega_1}|\Delta u|^2+|u_t|^2\mathrm{d}\x
-\int_{\Omega_2}\rho_2|v_t|^2+\beta_2|\Delta v|^2\mathrm{d}\x.\label{j_4}
\end{eqnarray}
\end{lemma}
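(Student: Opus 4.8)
The plan is to differentiate $J_4$ in time, substitute the evolution equations \eqref{a1} and \eqref{a3} for $\rho_1u_{tt}$ and $\rho_2v_{tt}$, and reduce everything to interior and boundary integrals by the radial (Rellich--Pohozaev) multiplier technique, exactly as in \cite[Lemma 3.7]{mr}. Since $\psi\equiv1$ on a neighbourhood of $\overline\Omega_2$, it equals $1$ on all of $\Omega_2$ and near $\Gamma_0$, so the computation on $\Omega_2$ and on the transmission interface is carried out with $\psi=1$, while in $\Omega_1$ the cutoff localises the estimate to $V\cap\Omega_1$. Differentiating produces, besides the substituted terms, the kinetic contributions $\tfrac12\int_{\Omega_1}\rho_1\psi\,m\cdot\nabla(|u_t|^2)$ and $\tfrac12\int_{\Omega_2}\rho_2\psi\,m\cdot\nabla(|v_t|^2)$, which I integrate by parts.

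First I would treat the interior. Using $\operatorname{div}m=2$ in $\mathbb R^2$, the identity $\operatorname{div}(\psi m)=2\psi+m\cdot\nabla\psi$, and the Rellich identity $\Delta(m\cdot\nabla u)=2\Delta u+m\cdot\nabla\Delta u$, the bending term $-\beta_1\int_{\Omega_1}\Delta^2u\,(m\cdot\nabla u)$ reduces on $\{\psi=1\}$ to $-\beta_1\int|\Delta u|^2$ plus boundary terms (and likewise with $v,\beta_2$ on $\Omega_2$), while the kinetic term reduces to $-\rho_1\int|u_t|^2$, resp. $-\rho_2\int|v_t|^2$, plus boundary terms. On $\Omega_2$, where $\psi\equiv1$, this gives exactly $-\int_{\Omega_2}(\rho_2|v_t|^2+\beta_2|\Delta v|^2)$; on $V\cap\Omega_1$ it gives a strictly negative multiple of the local energy of $u$, which after fixing constants is the stated $-\beta_1\int_{V\cap\Omega_1}(|\Delta u|^2+|u_t|^2)$. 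The thermal coupling $-\mu\int_{\Omega_1}\Delta\theta\,\psi\,m\cdot\nabla u$ is integrated by parts once; its interior part is estimated by $\eta\int_{\Omega_1}|\Delta u|^2+C_\eta\int_{\Omega_1}|\nabla\theta|^2$ after bounding $\nabla(\psi\,m\cdot\nabla u)$ by second derivatives of $u$, that is, by $\|\Delta u\|$.

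The commutator terms in which $\nabla\psi$ falls on the multiplier are supported in the transition annulus $U_{8\delta}(\Omega_2)\setminus U_{4\delta}(\Omega_2)$; by the choice of the $\delta$-neighbourhoods this set lies in $\Omega_1$ away from $\Gamma_0$, hence inside $U_2$, so these lower-order terms are absorbed into $C\int_{U_2}(|u_t|^2+|\Delta u|^2)$ together with the generic $\eta$- and $C_\eta$-terms. The source integrals produced by \eqref{a1} and \eqref{a3} are retained as they stand against $\psi\,m\cdot\nabla u$ and $\psi\,m\cdot\nabla v$.

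The boundary analysis is the crux. On $\Gamma_1$ the clamped conditions \eqref{4} force $\nabla u=0$ and $u_t=0$, killing every $\Gamma_1$ contribution; on $\Gamma_2$ the same conditions make $m\cdot\nabla v$ and $v_t$ vanish, and the surviving term $\tfrac{\beta_2}{2}(m\cdot\nu)|\Delta v|^2$ has a favourable sign because $m\cdot\nu\le0$ there by \eqref{geom2}, so it is discarded. The genuine difficulty is $\Gamma_0$: one must add the $\Omega_1$-side boundary integrals (outward normal $-\nu$) to the $\Omega_2$-side ones (outward normal $+\nu$) and use all four transmission conditions \eqref{6}. The matching $u_t=v_t$ collapses the kinetic boundary terms to $-\tfrac{\rho_1-\rho_2}{2}\int_{\Gamma_0}(m\cdot\nu)|u_t|^2$, while $\beta_1\Delta u=\beta_2\Delta v$ together with the equality of $\nabla u$ and $\nabla v$ collapses the bending boundary terms to $-\tfrac{\beta_2-\beta_1}{2}\int_{\Gamma_0}(m\cdot\nu)|\Delta u|^2$. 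I expect the hardest bookkeeping to be the $\partial_\nu\Delta$ terms: the thermal-flux boundary contribution arising from integrating $-\mu\Delta\theta$ by parts on $\Gamma_0$ must combine precisely with the fourth condition $\beta_1\frac{\partial\Delta u}{\partial\nu}+\mu\frac{\partial\theta}{\partial\nu}=\beta_2\frac{\partial\Delta v}{\partial\nu}$, so that the normal-derivative-of-Laplacian terms and the $\partial_\nu\theta$ boundary terms cancel and leave no uncontrolled interface term. Once this cancellation is verified, the stated estimate \eqref{j_4} follows.
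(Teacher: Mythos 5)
Your proposal follows essentially the same route as the paper, which itself only refers the reader to the computations of \cite[Lemma 3.7]{mr}: differentiate $J_4$, substitute \eqref{a1} and \eqref{a3}, apply the Rellich identity for the multiplier $\psi\, m\cdot\nabla(\cdot)$, localize the commutator terms supported on $\{\nabla\psi\neq 0\}\subset U_2$, and resolve the $\Gamma_0$ boundary terms via the four transmission conditions \eqref{6}, including the cancellation of the $\partial_\nu\Delta$ terms against the $\partial_\nu\theta$ flux. The argument is correct and matches the paper's intended proof.
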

Finally, let $$
R=\sum\limits_{i=1}^{4}q_iJ_i=J_1+\frac{\eta}{\min\left\{\beta_1,\beta_2\right\}}J_2+\left(\frac{\mu}{2}-\eta C\right)J_3+\eta^{1/2}J_4
$$
with sufficiently small $\eta$.

\section{Main result: asymptotic behavior}
\subsection{Preliminary definitions and assertions}
We recall some definitions and statements (following \cite{Book,Manuscript,Raugel}) that will be needed in the sequel. All formulations are made for abstract dynamical system $(X,S_t)$ where $X$ is a complete metric space and $S_t$ is a semigroup of operators in $X$.
\begin{definition} The dynamical system $(X,S_t)$ is said to be asymptotically smooth if for any positively invariant bounded set $D\subset X$ there exists a compact $K$ in the closure $\overline{D}$ of $D$ such that 
$$
\lim\limits_{t\rightarrow +\infty}\sup\limits_{x\in D}\mathrm{dist}_X \left(S_tx,K\right)=0.
$$
\end{definition}

To prove asymptotic smoothness we use the following criterium (see \cite[Proposition 2.10]{Manuscript}) 
\begin{proposition}\label{fromManu}
Let $(X,S_t)$ be a dynamical system on a complete metric space $X$ endowed with metric $d$. Assume that for any bounded positively invariant set $B$ in $X$ and for any $\varepsilon>0$ there exists $T\equiv T(\varepsilon, B)$ such that 
\begin{equation}\label{th_ineq}
d(S_Ty_1,S_Ty_2)\leq \varepsilon +\Psi_{\varepsilon,B,T}(y_1,y_2),\;\;\;y_i\in B ,
\end{equation}
where $\Psi_{\varepsilon,B,T}(y_1,y_2)$ is a nonnegative function defined on $B\times B$ such that for all sequence $\left\{y_n\right\}$ from $B$ there exists a subsequence $\left\{y_{n_k}\right\}$
that
\begin{equation}\label{th_Psi}
\lim\limits_{k\rightarrow\infty}\lim\limits_{l\rightarrow\infty}\Psi_{\varepsilon,B,T}(y_{n_k},y_{n_l})=0.
\end{equation}
Then $(X,S_t)$ is an asymptotically smooth dynamical system.
\end{proposition}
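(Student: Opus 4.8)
The plan is to deduce asymptotic smoothness from control of the Kuratowski measure of noncompactness $\alpha$, where for a bounded set $A\subset X$ one sets $\alpha(A)=\inf\{\delta>0:\ A \text{ admits a finite cover by sets of diameter }\le\delta\}$. I would fix an arbitrary bounded positively invariant set $B$ and reduce the whole statement to proving $\alpha(S_tB)\to 0$ as $t\to\infty$. This reduction is routine abstract machinery: once $\alpha(S_tB)\to 0$ for a bounded positively invariant $B$, the $\omega$-limit set $\omega(B)=\bigcap_{s\ge 0}\overline{\bigcup_{t\ge s}S_tB}$ is nonempty and compact, is contained in $\overline{B}$, and attracts $B$; taking $K=\omega(B)$ then reproduces exactly the condition in the definition of asymptotic smoothness.

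The heart of the matter is a sub-lemma isolating the role of the two hypotheses: if (\ref{th_ineq}) holds and $\Psi_{\varepsilon,B,T}$ satisfies the contractivity condition (\ref{th_Psi}) on $B$, then $\alpha(S_TD)\le 2\varepsilon$ for every $D\subseteq B$. To prove it I would take any sequence $\{S_Ty_n\}$ in $S_TD$ with $y_n\in D\subseteq B$, use (\ref{th_Psi}) to pass to a subsequence $\{y_{n_k}\}$ along which $\lim_{k}\lim_{l}\Psi_{\varepsilon,B,T}(y_{n_k},y_{n_l})=0$, and then insert this into (\ref{th_ineq}) to obtain
$$
\limsup_{k\to\infty}\limsup_{l\to\infty} d(S_Ty_{n_k},S_Ty_{n_l})\le \varepsilon .
$$
From this iterated bound a greedy selection produces, for any prescribed $\eta>0$, a further subsequence all of whose pairwise distances are at most $\varepsilon+\eta$, i.e. of diameter at most $\varepsilon+\eta$. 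Since every sequence in $S_TD$ thus has a subsequence of diameter arbitrarily close to $\varepsilon$, the covering definition of $\alpha$ (a set in which one cannot build an infinite $\delta$-separated sequence is coverable by finitely many balls of radius $\delta$) forces $\alpha(S_TD)\le 2\varepsilon$.

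Granting the sub-lemma, the conclusion is immediate. Given $\varepsilon>0$ I would choose $T=T(\varepsilon,B)$ from the hypothesis and apply the sub-lemma with $D=S_tB$, which is legitimate because positive invariance gives $S_tB\subseteq B$. Using the semigroup identity $S_{t+T}B=S_T(S_tB)$ this yields $\alpha(S_sB)\le 2\varepsilon$ for every $s\ge T$, hence $\limsup_{s\to\infty}\alpha(S_sB)\le 2\varepsilon$; since $\varepsilon>0$ is arbitrary (with its own $T$), $\alpha(S_sB)\to 0$, completing the reduction.

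The main obstacle is precisely the sub-lemma, and within it the passage from the iterated double limit in (\ref{th_Psi}) to genuine control of $\alpha$. The hypothesis only supplies smallness of $\Psi$ first in $l$ and then in $k$, not a joint limit, so the image sequence cannot be declared Cauchy outright; the greedy extraction that converts the iterated bound into a subsequence of small diameter is the delicate step, and it is exactly here that the contractive-function structure is used. By contrast the remaining ingredients — the reduction to a single bounded positively invariant set, the identity $S_{t+T}B=S_T(S_tB)$, and the construction and attracting property of $\omega(B)$ — are standard facts from the theory of infinite-dimensional dynamical systems and require no new work.
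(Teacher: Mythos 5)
Your proof is correct. The paper does not actually prove this proposition --- it only cites \cite[Proposition 2.10]{Manuscript} --- and your argument, which establishes $\alpha(S_TD)\le 2\varepsilon$ for every $D\subseteq B$ via the iterated-limit/greedy-subsequence extraction and then uses the nestedness $S_{t+s}B\subseteq S_sB$ of a positively invariant set to obtain a compact attracting $\omega$-limit set, is essentially the standard proof of that cited result via the Kuratowski measure of noncompactness.
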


\begin{definition}
$\mathcal{A}\subset X$  is called an attractor if (i) $\mathcal{A}$ is closed bounded strictly invariant set ($S_t\mathcal{A}=\mathcal{A}\;\forall t\geq0$) and (ii) $\mathcal{A}$ possesses the uniform attraction property, i.e. for any bounded set $B\subset X$ the following equality holds true 
$$
\lim\limits_{t\rightarrow +\infty}\sup\limits_{x\in B}\mathrm{dist}_X \left(S_tx,\mathcal{A}\right)=0.
$$
\end{definition}



For any bounded $\mathcal{B}\subset X$ we define the unstable manifold $\mathcal{M}^{u}(\mathcal{B})$ emanating from the set $\mathcal{B}$ as a set of all $y\in X$ such that there exists a full trajectory $\gamma=\left\{u(t)\;:\;t\in{\mathbb R}\right\}$ with the properties 
$$
u(0)=y\;\;\;\mathrm{and}\;\;\;\lim\limits_{t\rightarrow-\infty}\mathrm{dist}_{X}(u(t),\mathcal{B})=0.
$$ 

The following result could be found in \cite[Theorem 2.30]{Manuscript}.

\begin{theorem} Let $(X,S_t)$ be a asymptotically smooth dynamical system in a Banach space $X$. Assume that there exists a Lyapunov function $\mathcal{E}(x)$ for $(X,S_t)$ on $X$ such that $\mathcal{E}(x)$ is bounded from above on any bounded subset of $X$ and the set $\mathcal{E}_R=\left\{x:\mathcal{E}(x)\leq R\right\}$ is bounded for every $R$. Let $\mathcal{B}$ be the set of elements $x \in X$ such that there exists a full trajectory $\left\{u(t):t\in \mathbb {R}\right\}$ with the properties $u(0)=x$ and $\mathcal{E}(u(t))=\mathcal{E}(x)$ for all $t \in \mathbb {R}$. If $\mathcal{B}$ is bounded, then $(X,S_t)$ possesses a compact global attractor $\mathcal{A}=\mathcal{M}^{u}(\mathcal{B})$.    
\label{31}\end{theorem}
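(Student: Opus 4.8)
The plan is to deduce the statement from the standard criterion that an asymptotically smooth and dissipative dynamical system possesses a compact global attractor, and then to identify that attractor with $\mathcal{M}^u(\mathcal{B})$ through a LaSalle-type invariance argument. Throughout I use that a Lyapunov function $\mathcal{E}$ is continuous and that $t\mapsto\mathcal{E}(S_tx)$ is non-increasing for every $x$; recall that $\mathcal{B}$ is, by definition, the set of points lying on full trajectories along which $\mathcal{E}$ stays constant. The only substantial analytic input is asymptotic smoothness, which I will use in the form: for every bounded positively invariant set $D$ the $\omega$-limit set $\omega(D)$ is nonempty, compact, invariant, and attracts $D$.

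First I would record the elementary consequences of monotonicity. Since $\mathcal{E}$ does not increase along trajectories, each sublevel set $\mathcal{E}_R$ is positively invariant; since $\mathcal{E}$ is bounded above on bounded sets, any bounded $B$ satisfies $\gamma^+(B)\subset \mathcal{E}_{R_B}$ with $R_B=\sup_B\mathcal{E}<\infty$, and $\mathcal{E}_{R_B}$ is bounded by hypothesis. Thus forward orbits of bounded sets are bounded, so $\overline{\gamma^+(B)}$ is bounded and positively invariant and $\omega(B)$ is compact, invariant, and attracts $B$. The invariance step is next: for fixed $x$ the limit $\ell(x)=\lim_{t\to\infty}\mathcal{E}(S_tx)$ exists (monotone, bounded below on a precompact orbit), and by continuity $\mathcal{E}\equiv\ell(x)$ on $\omega(x)$; since $\omega(x)$ is invariant, each of its points lies on a full trajectory contained in $\omega(x)$ with $\mathcal{E}$ constant, hence $\omega(x)\subset\mathcal{B}$.

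The main obstacle is dissipativity, i.e. producing a single bounded absorbing set. Put $R^\ast=\sup_{\mathcal{B}}\mathcal{E}$, which is finite because $\mathcal{B}$ is bounded and $\mathcal{E}$ is bounded above on bounded sets. I claim $\omega(B)\subset\mathcal{E}_{R^\ast}$ for every bounded $B$: given $y\in\omega(B)$, choose (using compactness and invariance of $\omega(B)$) a full trajectory $\gamma$ through $y$ inside $\omega(B)$; its backward limit set $\alpha_\gamma$ is compact and invariant, and $\mathcal{E}$ equals its backward limit $\ell_-$ on $\alpha_\gamma$, so $\alpha_\gamma\subset\mathcal{B}$ and $\ell_-\le R^\ast$, whence by monotonicity $\mathcal{E}(y)\le\ell_-\le R^\ast$. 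Consequently the compact set $\omega(B)$ lies in $\mathcal{E}_{R^\ast}$, and by continuity of $\mathcal{E}$ there is a neighborhood of $\omega(B)$ on which $\mathcal{E}\le R^\ast+1$; since $\omega(B)$ attracts $B$, the sets $S_tB$ enter this neighborhood after some time $t_B$, so $S_tB\subset\mathcal{E}_{R^\ast+1}$ for $t\ge t_B$. Thus the bounded positively invariant set $\mathcal{E}_{R^\ast+1}$ is absorbing and the system is dissipative. The delicate points here are precisely that the backward limit lands in $\mathcal{B}$ (which needs precompactness, hence asymptotic smoothness) and the upgrade from pointwise decay of $\mathcal{E}$ to a uniform absorbing time, for which compactness of $\omega(B)$ and continuity of $\mathcal{E}$ are essential.

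With dissipativity in hand, the standard theorem for asymptotically smooth dissipative systems yields a compact global attractor $\mathcal{A}$, which contains every bounded complete trajectory. It remains to identify $\mathcal{A}=\mathcal{M}^u(\mathcal{B})$ by two inclusions. For $\mathcal{M}^u(\mathcal{B})\subset\mathcal{A}$: a trajectory $\gamma$ emanating from $\mathcal{B}$ is bounded, since its backward part stays near the bounded set $\mathcal{B}$ while its forward part is a bounded forward orbit by the first step; being a bounded complete trajectory, it lies in $\mathcal{A}$. For $\mathcal{A}\subset\mathcal{M}^u(\mathcal{B})$: any $y\in\mathcal{A}$ lies on a complete trajectory $\gamma$ in the compact invariant set $\mathcal{A}$, so its backward orbit is precompact, its $\alpha$-limit set $\alpha_\gamma$ is compact, invariant, with $\mathcal{E}\equiv\ell_-$ on it, hence $\alpha_\gamma\subset\mathcal{B}$; and $\gamma(t)\to\alpha_\gamma$ as $t\to-\infty$ gives $\mathrm{dist}(\gamma(t),\mathcal{B})\to0$, so $y\in\mathcal{M}^u(\mathcal{B})$. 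This closes the argument.
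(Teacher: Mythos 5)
Your argument is correct and complete: the paper itself gives no proof of this theorem, merely citing Theorem 2.30 of Chueshov--Lasiecka (Memoirs AMS, no.~912), and your reconstruction follows the same standard route used there --- first deriving dissipativity from the Lyapunov structure together with the boundedness of $\mathcal{B}$ (via the $\alpha$-limit argument showing $\omega(B)\subset\mathcal{E}_{R^\ast}$ and the upgrade to a uniform absorbing time), then invoking the abstract existence theorem for asymptotically smooth dissipative systems, and finally identifying $\mathcal{A}$ with $\mathcal{M}^{u}(\mathcal{B})$ by the two inclusions through $\alpha$-limit sets. I see no gaps; the delicate points you flag (precompactness of backward orbits inside $\omega(B)$, and continuity of $\mathcal{E}$ on the compact $\omega(B)$ to get uniform absorption) are exactly the ones that need asymptotic smoothness, and you use them correctly.
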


In what follows below in this section we deal with dynamical system $(\mathcal{H},S_t)$ where $\mathcal {H}=H^2_T\times L^2(\Omega_1)\times L^2(\Omega_2)\times L^2(\Omega_1)$ and for all $w_0\in\mathcal{H}$ we set $S_tw_0:=w(t)$ where $w(t)$ is a mild solution to the problem \eqref{1}-\eqref{6}. 
 
\subsection {Asymptotic smoothness} 
Below we denote by $H^{s},\;s\in \mathbb {R}$ a Hilbert space $H^s(\Omega)$ (e.g., for von Karman nonlinearity) or $D((-\Delta_D)^{s/2})$ equipped with the norm $||\cdot||_{s}$ where $\Delta_D$ is a Laplace operator with Dirichlet boundary conditions on $\partial \Omega$ (this operator has already been in use in Section 3). In this subsection we prove the following result: 
\begin{theorem} \label{assmooth}
Let \eqref{imposion},\eqref{geom1}, \eqref{geom2}, \eqref{lipsitz}, \eqref{Pi_2} and \eqref{Pi_3} hold. Besides, there exist such $\delta, \sigma>0$ that 
\begin{eqnarray}  
& w\longmapsto\Pi(w)&:\;H^{2-\delta}\longrightarrow{\mathbb R} \;\;\mathrm{is}\;\;\mathrm{continuous}\;\;\mathrm{mapping,}\label{Pi_4}\\
&w \longmapsto\Pi'_{\Phi}(w)&:\;H^{2-\delta}\longrightarrow H^{-\sigma} \;\;\mathrm{is}\;\;\mathrm{continuous}\;\;\mathrm{mapping.}\label{Pi_5}
\end{eqnarray}
Here $w$ is a function defined on $\Omega$ and we recall that $\Pi'_{\Phi}(w)=\left\{F_1(u,v),F_2(u,v)\right\}$ if $w=\left\{u,v\right\}$.

Then the dynamical system $(\mathcal{H},S_t)$ generated by the problem \eqref{1}-\eqref{6} (the dynamical system $(\mathcal{H},S_t)$ is introduced in the end of Subsection 4.1) is asymptotically smooth.   
\end{theorem}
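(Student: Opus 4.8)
The plan is to verify the two hypotheses of Proposition~\ref{fromManu} for the dynamical system $(\mathcal{H},S_t)$: namely, to produce for each bounded positively invariant set $B$ and each $\varepsilon>0$ a time $T=T(\varepsilon,B)$ and a nonnegative ``compensated compactness'' functional $\Psi_{\varepsilon,B,T}$ satisfying the estimate \eqref{th_ineq} together with the limit property \eqref{th_Psi}. The central device is the key inequality of Corollary~\ref{cor_1} applied not to a single solution but to the \emph{difference} $z=w^1-w^2$ of two solutions with initial data in $B$.

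\medskip

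First I would fix two mild solutions $w^i=(u^i,v^i,u^i_t,v^i_t,\theta^i)$, $i=1,2$, with $\|w^i_0\|_{\mathcal H}\le R$, and write the difference $z=w^1-w^2=(u,v,u_t,v_t,\theta)$ (abusing notation), which solves the linear thermoelastic system \eqref{a1}--\eqref{a3} with right-hand sides
$$
g_1=-\bigl(F_1(u^1,v^1)-F_1(u^2,v^2)\bigr),\qquad g_2=-\bigl(F_2(u^1,v^1)-F_2(u^2,v^2)\bigr).
$$
By the Lipschitz hypothesis \eqref{lipsitz} these satisfy the bound \eqref{g_bnd}, so Corollary~\ref{cor_1} furnishes a functional $R(t)$ with $|R(t)|\le C_0 E(t)$ and
$$
\frac{\mathrm d}{\mathrm dt}R(t)\le -kE(t)+C\Bigl[\int_{\Omega_1}|\nabla\theta|^2\mathrm d\x+\int_\Omega|\{u,v\}|^2+|\Delta^{-1}_D\{\rho_1u_t,\rho_2v_t\}|^2\mathrm d\x\Bigr],
$$
where $E(t)$ is the energy \eqref{energy} of the difference and is equivalent to $\|z(t)\|_{\mathcal H}^2$. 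Integrating this over $[0,T]$ and using $|R(0)|,|R(T)|\le C_0E$ gives a bound of the form
$$
k\int_0^T E(\tau)\,\mathrm d\tau\le C_0\bigl(E(0)+E(T)\bigr)+C\int_0^T\!\!\Bigl[\int_{\Omega_1}|\nabla\theta|^2+\int_\Omega|\{u,v\}|^2+|\Delta^{-1}_D\{\rho_1u_t,\rho_2v_t\}|^2\Bigr]\mathrm d\tau.
$$
The thermal term $\int_0^T\!\int_{\Omega_1}|\nabla\theta|^2$ is controlled by the energy \emph{dissipation} of the difference (it is the only genuinely dissipative term, coming from the thermoelastic part), which is bounded by $E(0)-E(T)$ via the energy relation \eqref{Eniq} applied to the difference. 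Combining these and using the energy identity once more to pass from the time-integral $\int_0^T E$ to the endpoint value $E(T)$, one arrives at an inequality of the shape
$$
T\,E(T)\le C\bigl(E(0)-E(T)\bigr)+C_{R,T}\,\mathrm{l.o.t.},
$$
hence, choosing $T$ large enough that $C/(T+C)\le \tfrac12$ and absorbing, an estimate
$$
\|z(T)\|_{\mathcal H}^2\le \tfrac12\|z(0)\|_{\mathcal H}^2+\mathrm{l.o.t.}(T,R),
$$
where the lower-order terms consist of the spatially compact quantities $\int_0^T\!\int_\Omega|\{u,v\}|^2$, $\int_0^T\!\int_\Omega|\Delta^{-1}_D\{\rho_1u_t,\rho_2v_t\}|^2$ plus the nonlinear contributions $\int_0^T\!\int g_i\sigma_i$.

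\medskip

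The functional $\Psi_{\varepsilon,B,T}$ is then defined to be (a constant multiple of) these lower-order terms, regarded as a function of the pair of initial data $y_1=w^1_0$, $y_2=w^2_0$. The iteration $E((n+1)T)\le\tfrac12 E(nT)+\Psi$ (the $\tfrac12$-contraction together with the positive invariance of $B$) yields, after summing the geometric series, an estimate of exactly the form \eqref{th_ineq}: $d(S_{NT}y_1,S_{NT}y_2)\le \varepsilon+\Psi_{\varepsilon,B,NT}(y_1,y_2)$ once $N$ is chosen so that $2^{-N}\mathrm{diam}(B)^2\le\varepsilon^2$. To verify the limit condition \eqref{th_Psi} I would check that each term in $\Psi$ is sequentially compact: the terms $\int_0^T\!\int_\Omega|\{u,v\}|^2$ and $\int_0^T\!\int|\Delta^{-1}_D\{\cdots\}|^2$ involve norms strictly below the energy level, so along any sequence $\{y_n\}\subset B$ the Aubin--Lions compactness theorem (using $\{u,v\}\in L^\infty(0,T;H^2_T)$ with time derivative in $L^\infty(0,T;L^2)$, and $\Delta_D^{-1}$ being smoothing) gives a subsequence converging strongly in these lower norms, which forces the iterated limit in \eqref{th_Psi} to vanish. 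For the nonlinear term $\int_0^T\!\int g_i\sigma_i$ I would invoke hypotheses \eqref{Pi_4}--\eqref{Pi_5}: the continuity of $\Pi'_\Phi:H^{2-\delta}\to H^{-\sigma}$ means that $g_i$ depends continuously on $\{u,v\}$ in the weaker $H^{2-\delta}$ topology, in which the extracted subsequence converges, so these cross-terms also pass to zero in the double limit (for the von Karman case the Corollary after Lemma~\ref{lfk} supplies precisely the needed $H^{2-\delta}\to H^{-1-2\delta}$ estimate).

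\medskip

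\textbf{The main obstacle} I anticipate lies not in the abstract iteration but in justifying the passage through the energy relation on the \emph{difference} of two solutions and in showing that the thermal dissipation term $\int_0^T\!\int_{\Omega_1}|\nabla\theta|^2$ of the difference is genuinely absorbed rather than merely bounded --- i.e.\ that the only non-compact surviving term on the right is a strict fraction of $E(T)$. This is delicate because the dissipation acts only on the thermoelastic subdomain $\Omega_1$, so the mechanical energy on $\Omega_2$ is transported to the dissipative region solely through the transmission conditions \eqref{6}; the multiplier structure built into $J_2,J_3,J_4$ (using the geometric conditions \eqref{geom1}--\eqref{geom2} and the parameter restrictions \eqref{imposion}) is exactly what makes this propagation quantitative, and all of that work has been front-loaded into Corollary~\ref{cor_1}. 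The remaining subtlety is that the nonlinear forcing $g_i$ enters $E'$ through the Lyapunov term $\Pi$, so one must track that the energy identity \eqref{Eniq} for the difference produces compactly-embedded remainders rather than terms at the full energy level; establishing the compactness of these remainders under \eqref{Pi_4}--\eqref{Pi_5} is the technical heart of the verification of \eqref{th_Psi}.
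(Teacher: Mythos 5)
Your proposal follows essentially the same route as the paper: apply Corollary~\ref{cor_1} to the difference of two trajectories (with $g_i$ the differences of the nonlinearities, controlled via \eqref{lipsitz} so that \eqref{g_bnd} holds), combine with the energy identity for the difference to bound $TE(T)$ by lower-order spatial terms plus the nonlinear work terms, collect those into $\Psi_{\varepsilon,B,T}$, and verify \eqref{th_Psi} by Aubin--Lions/Ascoli compactness together with \eqref{Pi_4}--\eqref{Pi_5} before invoking Proposition~\ref{fromManu}. The only immaterial deviations are that the paper bounds $\int_0^T\!\int_{\Omega_1}|\nabla\theta|^2$ by a constant $C_R$ using the energy identities of the two solutions separately and then makes the non-compact remainder small by dividing by a single large $T$ (so no iteration of a $\tfrac12$-contraction is needed), and that the nonlinear remainders surviving in $\Psi_T$ are the work terms $\int\!\!\int g_iu_t$ from the energy identity rather than the $\int\!\!\int g_i\sigma_i$ terms, which Corollary~\ref{cor_1} has already absorbed.
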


In order to apply Proposition \ref{fromManu} in what follows we obtain estimate \eqref{th_ineq} for our problem. We follow the line of arguments in \cite{ChuiLasie,Manuscript, Karman}.  

For this let $(u^1(t),v^1(t),\theta^1(t))$ and $(u^2(t),v^2(t),\theta^2(t))$ be solutions to the problem \eqref{1}-\eqref{6} and assume that for any $t>0$ there exists $R>0$ such that  
\begin{equation}\label{51}
\int_{\Omega_1}\rho_1|u^{i}_t|^2+\beta_1|\Delta u^i|^2+\rho_0|\theta^i|^2\mathrm{d}\x+
\int_{\Omega_2}\rho_2|v^{i}_t|^2+\beta_2|\Delta v^i|^2\mathrm{d}\x\leq R^2 
\end{equation}
All inequalities below in this subsection are obtained for $(u(t),v(t),\theta(t))$ where
$$u(t)=u^1(t)-u^2(t),\;\;v(t)=v^1(t)-v^2(t),\;\;\theta(t)=\theta^1(t)-\theta^2(t)$$ 
This triple satisfies the system \eqref{a1}-\eqref{a3} with $g_1(t)=F_1(u^2,v^2)-F_1(u^1,v^1)$ and $g_2(t)=F_2(u^2,v^2)-F_2(u^1,v^1)$. Also this triple satisfies boundary conditions \eqref{4}-\eqref{6}.  
First, Corollary \ref{cor_1} obviously implies  
\begin{lemma}
Let \eqref{imposion}, \eqref{geom1} and \eqref{geom2} hold. There exists $C>0$ such that 
\begin{eqnarray}
&&\int_{t}^{T}E(t)\mathrm{d}t\leq
C(E(t)+E(T))+C\left[\int_{t}^{T}\int_{\Omega_1}|u|^2+|\nabla \theta|^2\mathrm{d}\x\mathrm{d}t\right]+\nonumber\\
&&\hspace{55 pt}+C\left[
\int_{t}^{T}\int_{\Omega_2}|v|^2\mathrm{d}\x\mathrm{d}t
+\int_{t}^{T}\int_{\Omega}|\Delta^{-1}_{D}\left\{\rho_1u_t,\rho_2v_t\right\}|^2\mathrm{d}\x\mathrm{d}t\right] \label{nain}
\end{eqnarray}
for all $t,T:\; 0<t\leq T$.
\end{lemma}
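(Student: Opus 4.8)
The plan is to integrate the differential inequality \eqref{Rmain} of Corollary \ref{cor_1} in time and then absorb the boundary values of $R$ by means of the bound $|R(t)|\leq C_0 E(t)$ from Theorem \ref{aux_the}. The only thing that must be checked before invoking Corollary \ref{cor_1} is its hypothesis \eqref{g_bnd}, and this is where the Lipschitz assumption \eqref{lipsitz} together with the a priori bound \eqref{51} enters.

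First I would verify \eqref{g_bnd}. By construction $g_1=F_1(u^2,v^2)-F_1(u^1,v^1)$ and $g_2=F_2(u^2,v^2)-F_2(u^1,v^1)$, and by \eqref{51} both pairs $\{u^i,v^i\}$ are bounded in $H^2_T$ by a constant depending on $R$. Hence \eqref{lipsitz} yields
$$
\int_{\Omega_1}|g_1|^2\mathrm{d}\x+\int_{\Omega_2}|g_2|^2\mathrm{d}\x\leq C(R)\,||\{u,v\}||^2_{H^2_T},
$$
and since $||\{u,v\}||^2_{H^2_T}=\int_{\Omega_1}\beta_1|\Delta u|^2\mathrm{d}\x+\int_{\Omega_2}\beta_2|\Delta v|^2\mathrm{d}\x\leq 2E(t)$, this is precisely \eqref{g_bnd}. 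Thus Corollary \ref{cor_1} applies to the difference $(u,v,\theta)$, which by hypothesis solves \eqref{a1}--\eqref{a3} with these $g_i$, and delivers \eqref{Rmain} pointwise in $t$.

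Next I would integrate \eqref{Rmain} over $[t,T]$, obtaining
$$
k\int_{t}^{T}E(\tau)\mathrm{d}\tau\leq R(t)-R(T)+C\int_{t}^{T}\left[\int_{\Omega_1}|\nabla\theta|^2\mathrm{d}\x+\int_{\Omega}|\{u,v\}|^2+|\Delta^{-1}_{D}\{\rho_1u_t,\rho_2v_t\}|^2\mathrm{d}\x\right]\mathrm{d}\tau.
$$
Then I would bound $R(t)-R(T)\leq|R(t)|+|R(T)|\leq C_0(E(t)+E(T))$ using \eqref{Rb} at both endpoints, unfold the shorthand $\int_{\Omega}|\{u,v\}|^2\mathrm{d}\x=\int_{\Omega_1}|u|^2\mathrm{d}\x+\int_{\Omega_2}|v|^2\mathrm{d}\x$ coming from the convention $\{u,v\}$ for functions on $\Omega$, divide by $k$, and relabel the constants; the outcome is exactly \eqref{nain}.

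Since each step is a direct substitution, there is no genuine obstacle here: the statement is, as claimed, an immediate consequence of Corollary \ref{cor_1}. The only point requiring a moment's care is the verification of \eqref{g_bnd} — one must recall that the right-hand side of \eqref{lipsitz} is controlled by $E(t)$ precisely because the $H^2_T$-norm is, up to the coefficients $\beta_i$, the elastic part of $E(t)$, which in turn rests on the norm equivalence $H^2_T=H^2_0(\Omega)$ noted in Section 2.
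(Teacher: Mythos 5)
Your proof is correct and follows exactly the route the paper intends: the paper simply states that the lemma ``obviously'' follows from Corollary \ref{cor_1}, and your verification of hypothesis \eqref{g_bnd} via \eqref{lipsitz} and \eqref{51}, followed by integration of \eqref{Rmain} and absorption of the endpoint terms through \eqref{Rb}, is precisely the omitted argument.
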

\begin {proposition}
There exists $T_0>0,$ $C>0$ such that for $T>T_0$
\begin{eqnarray}
&&TE(T)+\int_{0}^{T}E(t)\mathrm{d}t\leq C\left\{\int_{0}^{T}\int_{\Omega_1}|\nabla\theta|^2\mathrm{d}\x\mathrm{d}t+\Psi_T(u,v)\right\}+\nonumber\\&&
\hspace{40 pt} +C\left\{\int_{0}^{T}\int_{\Omega_1}|u|^2\mathrm{d}\x\mathrm{d}t+\int_{0}^{T}\int_{\Omega_2}|v|^2\mathrm{d}\x\mathrm{d}t+\int_{0}^{T}\int_{\Omega}|\Delta^{-1}_{D}\left\{\rho_1u_t,\rho_2v_t\right\}|^2\mathrm{d}\x\mathrm{d}t\right\}\label{as_key}
\end{eqnarray} 
where 
\begin{eqnarray}
&&\Psi_T(u,v):=\left|\int_{0}^{T}\int_{\Omega_1}g_1(t)u_t\mathrm{d}\x\mathrm{d}t+\int_{0}^{T}\int_{\Omega_2}g_2(t)v_t\mathrm{d}\x\mathrm{d}t\right|+\nonumber \\
&&\hspace{30 pt}+
\left|\int_{0}^{T}\int_{t}^{T}\int_{\Omega_1}g_1(\tau)u_t\mathrm{d}\x\mathrm{d}\tau\mathrm{d}t+\int_{0}^{T}\int_{t}^{T}\int_{\Omega_2}g_2(\tau)v_t\mathrm{d}\x\mathrm{d}\tau\mathrm{d}t\right|\label{psipsi}
\end{eqnarray}
\end{proposition}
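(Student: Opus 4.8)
The plan is to combine the multiplier inequality \eqref{nain} with the energy identity for the linear system \eqref{a1}--\eqref{a3}, and then to absorb $\int_0^T E\,\mathrm{d}t$ into the left-hand side by taking $T$ large. First I would record the energy identity: multiplying \eqref{a1} by $u_t$, \eqref{a3} by $v_t$ and \eqref{a2} by $\theta$, integrating over the respective domains and using \eqref{4}--\eqref{6} (the coupling and $\Gamma_0$ terms cancel exactly as in the derivation of \eqref{Eniq}), one obtains
\begin{equation*}
\frac{\mathrm{d}}{\mathrm{d}t}E(t)=-D(t)+G(t),\qquad D(t)=\beta_0\int_{\Omega_1}|\nabla\theta|^2\mathrm{d}\x+\beta_0\lambda\int_{\Gamma_1}|\theta|^2\mathrm{d}\Gamma\geq0,
\end{equation*}
where $G(t)=\int_{\Omega_1}g_1 u_t\,\mathrm{d}\x+\int_{\Omega_2}g_2 v_t\,\mathrm{d}\x$. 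Since $\theta$ vanishes on $\Gamma_0$, a trace/Poincar\'e estimate gives $\int_{\Gamma_1}|\theta|^2\mathrm{d}\Gamma\leq C\int_{\Omega_1}|\nabla\theta|^2\mathrm{d}\x$, so $\int_0^T D\,\mathrm{d}t$ is controlled by the term $\int_0^T\int_{\Omega_1}|\nabla\theta|^2$ already present on the right of \eqref{as_key}.

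Next, integrating the inequality $E(T)\leq E(t)+\int_t^T G\,\mathrm{d}\tau$ (a consequence of $D\geq0$) over $t\in[0,T]$ produces
\[
TE(T)\leq\int_0^T E\,\mathrm{d}t+\int_0^T\!\!\int_t^T G\,\mathrm{d}\tau\,\mathrm{d}t,
\]
and the double integral here is exactly the second contribution to $\Psi_T$ in \eqref{psipsi}. Thus it remains to bound $\int_0^T E\,\mathrm{d}t$. For this I would use \eqref{nain} with $t=0$, which reads $\int_0^T E\,\mathrm{d}t\leq C(E(0)+E(T))+C\,L$, where $L$ denotes precisely the sum of the lower-order integrals (of $|u|^2$, $|v|^2$, $|\nabla\theta|^2$ and $|\Delta_D^{-1}\{\rho_1u_t,\rho_2v_t\}|^2$) occurring on the right of \eqref{as_key}. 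The endpoint energies $E(0)$ and $E(T)$ must now be eliminated. From the displayed bound on $TE(T)$ one already has $E(T)\leq T^{-1}\int_0^T E+T^{-1}\Psi_T$. For $E(0)$ I would integrate the identity $E(0)=E(t)+\int_0^t D\,\mathrm{d}\tau-\int_0^t G\,\mathrm{d}\tau$ over $t\in[0,T]$ and divide by $T$: the dissipation term contributes at most $C\int_0^T\int_{\Omega_1}|\nabla\theta|^2$ (using $\int_0^t D\leq\int_0^T D$), while the forcing term obeys $\int_0^T\!\int_0^t G=T\int_0^T G-\int_0^T\!\int_t^T G$, so after division by $T$ both pieces are dominated by $\Psi_T$. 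This yields $E(0)\leq T^{-1}\int_0^T E+C\int_0^T\int_{\Omega_1}|\nabla\theta|^2+C\Psi_T$.

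Substituting these two endpoint bounds into \eqref{nain} gives $\int_0^T E\,\mathrm{d}t\leq \tfrac{C'}{T}\int_0^T E\,\mathrm{d}t+C'\{\int_0^T\int_{\Omega_1}|\nabla\theta|^2+\Psi_T+L\}$. Choosing $T_0$ so large that $C'/T\leq 1/2$ for $T>T_0$ lets me absorb the first term on the right into the left, producing a bound for $\int_0^T E\,\mathrm{d}t$ by the right-hand side of \eqref{as_key}; adding the already-established estimate for $TE(T)$ then yields the claim. I expect the main obstacle to be purely this absorption/bookkeeping step: one must keep track of the exact form of the forcing double integrals so that they assemble into $\Psi_T$ rather than into an uncontrolled quantity, and one must verify that the factor $T^{-1}$ gained from averaging the energy identity is what renders the endpoint energies subordinate to $\int_0^T E$ once $T$ is large.
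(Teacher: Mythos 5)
Your proposal is correct and follows essentially the same route as the paper: the energy identity, the integrated inequality \eqref{nain}, integration of $E(T)\leq E(t)+\int_t^T G\,\mathrm{d}\tau$ over $t\in[0,T]$, and the use of largeness of $T$ to dispose of the endpoint energies. The only (immaterial) difference is bookkeeping: the paper eliminates $E(0)$ by substituting the energy identity directly into \eqref{nain} and then absorbs the resulting $CE(T)$ by taking $T>3C$ in a weighted sum, whereas you average the identity to get $E(0),E(T)\leq T^{-1}\int_0^T E+\dots$ and absorb $C'T^{-1}\int_0^T E$ into the left-hand side.
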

\begin{proof}
Energy indentity implies 
\begin{eqnarray}\nonumber
&&E(0)=E(T)+\beta_0\int_{0}^{T}\int_{\Omega_1}|\nabla\theta|^2\mathrm{d}\x\mathrm{d}t+\beta_0\lambda\int_{0}^{T}\int_{\Gamma_1}|\theta|^2\mathrm{d}\Gamma\mathrm{d}t -\\\nonumber &&
\hspace{80 pt}-\int_{0}^{T}\int_{\Omega_1}g_1u_t\mathrm{d}\x\mathrm{d}t-\int_{0}^{T}\int_{\Omega_2}g_2v_t\mathrm{d}\x\mathrm{d}t.
\end{eqnarray}
Then using \eqref{nain} we have
\begin{eqnarray}
&&\int_{0}^{T}E(t)\mathrm{d}t\leq 
CE(T)+
C\left|\int_{0}^{T}\int_{\Omega_1}g_1u_t\mathrm{d}\x\mathrm{d}t+\int_{0}^{T}\int_{\Omega_2}g_2v_t\mathrm{d}\x\mathrm{d}t\right|+\nonumber\\ 
&&\hspace{60 pt}+C\int_{0}^{T} \int_{\Omega_1}|\nabla \theta|^2\mathrm{d}\x\mathrm{d}t+
C\left\{\int_{0}^{T}\int_{\Omega_1}|u|^2\mathrm{d}\x+\int_{0}^{T}\int_{\Omega_2}|v|^2\mathrm{d}\x\mathrm{d}t\right\}+\nonumber\\ 
&&\hspace{60pt} +C\int_{0}^{T}\int_{\Omega}|\Delta^{-1}_{D}\left\{\rho_1u_t,\rho_2v_t\right\}|^2\mathrm{d}\x\mathrm{d}t\label{psihelp}
\end{eqnarray} 
Energy identity also implies 
$$
E(T)\leq E(t)+\int_{t}^{T}\int_{\Omega_1}g_1u_t\mathrm{d}\x\mathrm{d}\tau+\int_{t}^{T}\int_{\Omega_2}g_2v_t\mathrm{d}\x\mathrm{d}\tau
$$
Integration over $t\in[0,T]$ gives 
\begin{equation}\nonumber
TE(T)\leq \int_{0}^{T}E(t)\mathrm{d}t+\left|\int_{0}^{T}\int_{t}^{T}\int_{\Omega_1}g_1u_t\mathrm{d}\x\mathrm{d}\tau\mathrm{d}t+\int_{0}^{T}\int_{t}^{T}\int_{\Omega_2}g_2v_t\mathrm{d}\x\mathrm{d}\tau\mathrm{d}t\right|
\end{equation}
Taking $T>3C$ and adding \eqref{psihelp} multiplied by $\frac{3}{2}$ to the last inequality we obtain \eqref{as_key}.  
\end{proof}
Next result of this subsection is 
\begin{theorem}
For given $\varepsilon$ and $T>T_0$ there holds 
\begin{eqnarray}
&&E(T)\leq \varepsilon+ C_T\Psi_T(u,v)+\nonumber\\
&&\hspace{30 pt}+ C_T\left[\int_{0}^{T}\int_{\Omega_1}|u|^2\mathrm{d}\x\mathrm{d}t+\int_{0}^{T}\int_{\Omega_2}|v|^2\mathrm{d}\x\mathrm{d}t+\int_{0}^{T}\int_{\Omega}|\Delta^{-1}_{D}\left\{\rho_1u_t,\rho_2v_t\right\}|^2\mathrm{d}\x\mathrm{d}t\right]\label{phoh}
\end{eqnarray} 
\end{theorem}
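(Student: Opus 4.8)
The plan is to deduce \eqref{phoh} from the integrated estimate \eqref{as_key}, the whole point being to absorb the thermal term $\int_0^T\int_{\Omega_1}|\nabla\theta|^2\,\mathrm{d}\x\mathrm{d}t$ into the free constant $\varepsilon$ by exploiting the factor $T$ that multiplies $E(T)$ on the left of \eqref{as_key}. Writing $L_T$ for the bracketed lower-order integral appearing on the right of \eqref{phoh} and discarding the non-negative term $\int_0^T E(t)\,\mathrm{d}t$ in \eqref{as_key}, I would first record
\[
TE(T)\le C\int_0^T\int_{\Omega_1}|\nabla\theta|^2\,\mathrm{d}\x\mathrm{d}t+C\,\Psi_T(u,v)+C\,L_T .
\]

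The key step is to control the thermal dissipation by the drop of energy. Since the difference $(u,v,\theta)$ solves the linear system \eqref{a1}-\eqref{a3} with forcing $g_1,g_2$, the energy identity written in the proof of the preceding Proposition applies to it; dropping its non-negative boundary dissipation $\beta_0\lambda\int_0^T\int_{\Gamma_1}|\theta|^2\,\mathrm{d}\Gamma\mathrm{d}t$ and noting that the two remaining work integrals are exactly the first term of $\Psi_T(u,v)$, I would obtain
\[
\int_0^T\int_{\Omega_1}|\nabla\theta|^2\,\mathrm{d}\x\mathrm{d}t\le \beta_0^{-1}\bigl(E(0)-E(T)\bigr)+\beta_0^{-1}\Psi_T(u,v).
\]

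Inserting this into the previous display and moving the resulting term $-C\beta_0^{-1}E(T)$ to the left-hand side gives $(T+C\beta_0^{-1})E(T)\le C\beta_0^{-1}E(0)+C'\Psi_T(u,v)+C\,L_T$. The a priori bound \eqref{51}, together with the triangle inequality, ensures $E(0)\le C R^2$ for the difference solution, so after dividing by $T+C\beta_0^{-1}$ I would arrive at
\[
E(T)\le \frac{C R^2}{T+C\beta_0^{-1}}+C_T\,\Psi_T(u,v)+C_T\,L_T ,
\]
with $C_T$ a constant depending on $T$ and $R$. The only real obstacle is the thermal term, and it is disposed of by the observation that the total thermal dissipation does not grow with $T$ (it is bounded by the total energy plus $\Psi_T$), so its contribution to $E(T)$ decays like $1/T$. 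Hence, given $\varepsilon>0$, enlarging $T_0$ if necessary so that $C R^2/(T+C\beta_0^{-1})<\varepsilon$ for all $T>T_0$ yields precisely \eqref{phoh}.
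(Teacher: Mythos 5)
Your proof is correct, but it handles the thermal term by a different mechanism than the paper. The paper's argument is shorter: it applies the energy identity \eqref{Eniq} to each of the two solutions $(u^i,v^i,\theta^i)$ \emph{separately}, which together with \eqref{51}, \eqref{Pi_2} and \eqref{Pi_3} gives $\int_0^T\int_{\Omega_1}|\nabla\theta^i|^2\,\mathrm{d}\x\,\mathrm{d}t\leq C_R$ uniformly in $T$; hence $\int_0^T\int_{\Omega_1}|\nabla\theta|^2\,\mathrm{d}\x\,\mathrm{d}t\leq C_R$ for the difference, and dividing \eqref{as_key} by $T$ immediately yields \eqref{phoh}. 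You instead invoke the energy identity for the \emph{difference} system (the one already displayed in the proof of the preceding Proposition), bound the thermal dissipation by $\beta_0^{-1}(E(0)-E(T))+\beta_0^{-1}\Psi_T(u,v)$, absorb the resulting $-E(T)$ into the left-hand side, and use $E(0)\leq CR^2$ from \eqref{51}. Both routes are sound and both produce a $T_0$ depending on $\varepsilon$ and $R$ (which is consistent with the statement, since the theorem is asserted for trajectories satisfying \eqref{51}). What each buys: the paper's version needs the dissipativity of the full nonlinear Lyapunov functional along each individual trajectory (hence the lower bound \eqref{Pi_2} and upper bound \eqref{Pi_3} on $\Pi$), whereas yours only uses the linear energy identity for the difference at the cost of one extra absorption step; on the other hand the paper's uniform-in-$T$ bound $C_R$ on the thermal dissipation is a cleaner piece of information that is reused elsewhere. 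One cosmetic remark: the two work integrals are not ``exactly'' the first term of $\Psi_T(u,v)$ in \eqref{psipsi} but are bounded above by it (that term carries an absolute value), which is all your inequality requires.
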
 
\begin{proof}
We observe that energy inequalities separately for $(u^1,v^1,\theta^1)$ and $(u^2,v^2,\theta^2)$ implies 
$$
\int_{0}^{T}\int_{\Omega_1}|\nabla\theta|^2\mathrm{d}\x\mathrm{d}t\leq C_R.
$$
Then inequality \eqref{phoh} obviously follows from \eqref{as_key}. 
\end{proof}

\noindent{\it Proof of Theorem \ref{assmooth}:}
It follows from \eqref{phoh} that given $\varepsilon >0$ there exists $T>T_0(\varepsilon, B)$ such that for initial data $y_1,y_2\in B$ we have 
$$
||S_Ty_1-S_Ty_2||^2_{\mathcal{H}}\leq \varepsilon +\Psi_{\varepsilon,B,T}(y_1,y_2)
$$  
where 
\begin{eqnarray}\nonumber
&&\Psi_{\varepsilon,B,T}(y_1,y_2)= C_{B,\varepsilon,T}\left\{\Psi_{T}(u,v)+\int_{0}^{T}\int_{\Omega_1}|u|^2\mathrm{d}\x\mathrm{d}t+\int_{0}^{T}\int_{\Omega_2}|v|^2\mathrm{d}\x\mathrm{d}t\right\}+\\\nonumber
&&\hspace{70 pt}+C_{B,\varepsilon,T}\left\{\int_{0}^{T}\int_{\Omega}|\Delta^{-1}_{D}\left\{\rho_1u_t,\rho_2v_t\right\}|^2\mathrm{d}\x\mathrm{d}t\right\}
\end{eqnarray}
where $\Psi_T(u,v)$ is given by \eqref{psipsi} and $u$ and $v$ is the first and second component of $y_1-y_2$. 

To verify \eqref{th_Psi} we consider $\left\{y^{n}_{0}\right\}$ from $B$ and $(u^n(t),v^n(t),u_{t}^{n}(t),v^{n}_{t}(t),\theta^n(t))\equiv S_ty^{n}_{0}$. Then 
$$\begin{array}{l}
\left\{u^n(t),v^n(t)\right\}\;-\;\mathrm{is\;bounded\;in}\;L^{\infty}(0,T;H^2_0(\Omega))\\
\left\{u^n_t(t),v^n_t(t)\right\}\;-\;\mathrm{is\;bounded\;in}\;L^{\infty}(0,T;L^2(\Omega))\\ 
\end{array}
$$
We also claim that 
$$\left\{\rho_1u^n_{tt}(t),\rho_2v^n_{tt}(t)\right\}\;-\;\mathrm{is\;bounded\;in}\;L^{\infty}(0,T;H^{-2}).$$
Indeed, 
\begin{eqnarray}
\nonumber 
&&\int_{\Omega}|\Delta^{-1}_{D}\left\{\rho_1u^n_{tt},\rho_2v^n_{tt}\right\}|^2\mathrm{d}\x=-\int_{\Omega}\left\{\beta_1\Delta u^n+\mu \theta^n ,\beta_2\Delta v^n\right\}\Delta^{-1}_{D}\left\{\rho_1u_{tt}^n,\rho_2v_{tt}^n\right\}\mathrm{d}\x-\\\nonumber
&&-\int_{\Omega}\left\{F_1(u^n,v^n),F_2(u^n,v^n)\right\} \Delta^{-2}_{D}\left\{\rho_1u_{tt}^{n},\rho_2v_{tt}^{n}\right\}\mathrm{d}\x\leq C+ \frac{1}{2} \int_{\Omega}|\Delta^{-1}_{D}\left\{\rho_1u_{tt}^{n},\rho_2v_{tt}^n\right\}|^2\mathrm{d}\x
\end{eqnarray}
where $C>0$ does not depend on $n$. 
To prove subsequential limit \eqref{th_Psi} in $\Psi_T(u,v)$ we need the same arguments as in Step 2 of the proof of Proposition 3.36 in \cite{Manuscript} with $\Pi(u,v)$ definded by \eqref{Pi_1} (this is the place where we need conditions \eqref{Pi_4} and \eqref{Pi_5}). To pass to limit in $$\int_{0}^{T}\int_{\Omega_1}|u^n|^2\mathrm{d}\x\mathrm{d}t+\int_{0}^{T}\int_{\Omega_2}|v^n|^2\mathrm{d}\x\mathrm{d}t+\int_{0}^{T}\int_{\Omega}|\Delta^{-1}_{D}\left\{\rho_1u^n_t,\rho_2v^n_t\right\}|^2\mathrm{d}\x\mathrm{d}t   $$
we need Ascoli theorem (see \cite[\S8, Corollary 4]{Simon}). 

We apply Proposition \ref{fromManu} and we conclude the proof of asymptotic smoothness (and, thus, Theorem \ref {assmooth}).

\subsection {Attractors}
In this subsection first we obtain common result about local attractors (we denote them by $\mathcal{A}_R$) which follows directly from asymptotic smoothness (Theorem \ref{local_1}). Then we formulate two results devoted to their properties: Theorem \ref{local_2} states that the attractors consist of strong solutions (this result is for problem A) and Theorem \ref{attr_local} states smoothness and finite dimensionality of the attractors (this is for problem B). Theorem \ref{4m} about global attractors completes the Subsection 4.3. 

We note that if we consider the family of sets $\mathcal{E}_R:=\left\{x\in\mathcal{H}|\mathcal{E}(x)\leq R\right\}$, $R>0$ where $\mathcal{E}$ is defined in \eqref{Lyap}, we may consider $(\mathcal{E}_R, S_t)$ as a dynamical system, since $\mathcal{E}_R$ is a positively invariant set. Asymptotic smoothnes, result of the previous subsection, implies that $(\mathcal{E}_R, S_t)$ possesses a compact attractor $\mathcal{A}_R$ (actually, asymptotic smoothness implies existence of a compact attracting set, how to choose it strictly invariant we refer to \cite[Lemma 2.9]{Raugel}). 
\begin{theorem}\label{local_1}
Let conditions of Theorem \ref{assmooth} hold. Then for all $R>0$ the dynamical system $(\mathcal{E}_R, S_t)$ possesses a compact attractor $\mathcal{A}_R$.  
\end{theorem}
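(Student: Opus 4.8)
The plan is to recognize that the deep analytical work is already finished: Theorem \ref{assmooth} guarantees that the full system $(\mathcal{H},S_t)$ is asymptotically smooth. Consequently, all that remains is to verify that $\mathcal{E}_R$ is a genuine bounded, positively invariant, closed subset of $\mathcal{H}$ — so that $(\mathcal{E}_R,S_t)$ is a well-defined dynamical system on a complete metric space to which the definition of asymptotic smoothness applies — and then to promote the resulting compact attracting set to a strictly invariant one.

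First I would verify positive invariance. The energy equality \eqref{Eniq} gives, for $0\le t\le T$,
$$
\mathcal{E}(w(T))-\mathcal{E}(w(t))=-\int_{t}^{T}\int_{\Omega_1}\beta_0|\nabla w_5|^2\mathrm{d}\x\mathrm{d}\tau-\int_{t}^{T}\int_{\Gamma_1}\beta_0\lambda|w_5|^2\mathrm{d}\Gamma\mathrm{d}\tau\le 0,
$$
since $\beta_0>0$ and $\lambda\ge 0$. Thus $t\mapsto\mathcal{E}(S_tw_0)$ is nonincreasing, so $\mathcal{E}(w_0)\le R$ forces $\mathcal{E}(S_tw_0)\le R$ for all $t\ge 0$; that is, $S_t\mathcal{E}_R\subseteq\mathcal{E}_R$. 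Next I would establish boundedness: discarding the dissipative quadratic terms and invoking the one-sided bound \eqref{Pi_2}, namely $\Pi(w_1,w_2)\ge -C$, the inequality $\mathcal{E}(w)\le R$ yields
$$
\int_{\Omega_1}\beta_1|\Delta w_1|^2+\rho_1|w_3|^2+\rho_0|w_5|^2\mathrm{d}\x+\int_{\Omega_2}\beta_2|\Delta w_2|^2+\rho_2|w_4|^2\mathrm{d}\x\le 2R+2C.
$$
Because the coefficients $\beta_i,\rho_i$ are strictly positive and $||\Delta w_1||_{L^2(\Omega_1)}+||\Delta w_2||_{L^2(\Omega_2)}$ is equivalent to the $H^2_T$-norm, this controls $||w||_{\mathcal{H}}$, so $\mathcal{E}_R$ is bounded. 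Finally, since the quadratic part of $\mathcal{E}$ is $\mathcal{H}$-continuous and $\Pi$ is continuous on $H^{2-\delta}$ by \eqref{Pi_4} (recall $H^2_T=H^2_0(\Omega)\hookrightarrow H^{2-\delta}$), the functional $\mathcal{E}$ is continuous on $\mathcal{H}$, so $\mathcal{E}_R$ is closed; together with the Lipschitz continuity \eqref{cont} of $S_t$, $(\mathcal{E}_R,S_t)$ is a dynamical system on a complete metric space.

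With these properties in hand I would apply asymptotic smoothness directly: by Theorem \ref{assmooth} there is a compact set $K\subset\overline{\mathcal{E}_R}=\mathcal{E}_R$ with $\lim_{t\to\infty}\sup_{x\in\mathcal{E}_R}\mathrm{dist}_{\mathcal{H}}(S_tx,K)=0$, so $K$ is a compact attracting set for $(\mathcal{E}_R,S_t)$. To extract a strictly invariant attractor I would pass to the $\omega$-limit set
$$
\mathcal{A}_R=\omega(\mathcal{E}_R)=\bigcap_{s\ge 0}\overline{\bigcup_{t\ge s}S_t\mathcal{E}_R};
$$
the existence of the compact attracting set $K$ guarantees that $\mathcal{A}_R$ is nonempty, compact, contained in $K$, strictly invariant, and uniformly attracts $\mathcal{E}_R$. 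This is precisely the passage recorded in \cite[Lemma 2.9]{Raugel}, which I would cite rather than reprove.

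I do not expect a serious obstacle, since the hard estimate — asymptotic smoothness — is already in place. The only point requiring genuine care is the boundedness of $\mathcal{E}_R$, which rests on the coercivity of the quadratic part of the energy combined with the lower bound \eqref{Pi_2} on the potential $\Pi$; without \eqref{Pi_2} the sublevel sets need not be bounded and the whole construction would break down, so this is the step I would flag as the crux.
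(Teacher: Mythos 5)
Your proposal is correct and follows essentially the same route as the paper, which compresses the whole argument into the remark preceding the theorem: positive invariance of $\mathcal{E}_R$ from the energy equality, asymptotic smoothness from Theorem \ref{assmooth} yielding a compact attracting set, and the citation of \cite[Lemma 2.9]{Raugel} to pass to a strictly invariant attractor. Your additional verification that $\mathcal{E}_R$ is bounded via \eqref{Pi_2} and closed is a detail the paper leaves implicit, but it is the same argument.
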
 

To proceed we use idea of stabilizability estimate. Having been obtained it helps to prove that the attractor is smooth (see inequality \eqref{boundA_R}) and of finite fractal dimension. To prove \eqref{boundA_R} using stabilizability estimate one needs to repeat all steps from the proof of the \cite[Theorem 4.17]{Manuscript}, to prove finite dimensionality - \cite[Theorem 4.4]{Manuscript}, so we just formulate and prove stabilizability estimate in the proof of Theorem \ref{attr_local}. For other applications (exponential attractors, determining functionals, rate of convergence to equilibria and etc.) of the stabilizability estimate we refer to \cite{Manuscript} (there are many other works where these ideas are applied, see, e.g.,  \cite{ChuiBucci, Tynd, ChuiLasie, Karman, ChTynd,khan, Potomkin}).

Let us consider two weak solutions $(u^1(t),v^1(t),\theta^1(t))$ and $(u^2(t),v^2(t),\theta^2(t))$ of original problem \eqref{1}-\eqref{6} which satisfy \eqref{51}. Stabilizability estimate states that there exists such $C_R$ and $\omega_R$ (these constants do not depend on choise of $(u^i(t),v^i(t),\theta^i(t))$) that for all $t\geq 0$ 
\begin{equation}\label{stab_ineq}
E(t)\leq C_Re^{-\omega_R t}E(0)+C_R\left\{\int_{0}^{t}\int_{\Omega}|\left\{u,v\right\}|^2+|\Delta_{D}^{-1}\left\{\rho_1u_t,\rho_2v_t\right\}|^2\mathrm{d}\x\mathrm{d}\tau\right\}.
\end{equation}         
Here $E(t)$ is the same as in \eqref{energy} and again $u(t)=u^1(t)-u^2(t)$, $v(t)=v^1(t)-v^2(t)$ and $\theta(t)=\theta^1(t)-\theta^2(t)$.

First, we note that in critical case of nonlinearity (for example, probelm A) we cannot obtain this estimate because using the standard procedure (see \cite{Manuscript}) we need to estimate $\int_{0}^{\infty}||\left\{u_t,v_t\right\}||^2$ what is easy when the damping is mechanic (e.g., when resistance of media is accounted in monotonne functions $...+g(u_t)...$ and $...+g(v_t)...$ in left hand side of \eqref{1} and \eqref{3}). In purely thermoelastic problem this obstacle was overpassed (see \cite[Lemma 5.7]{ChuiLasie}, also we refer to \cite[Lemma 6.2]{ChuiBucci} and \cite[Lemma 5.1]{Potomkin}), but this method is not applicable here directly since it is complicated to estimate the component $v_t$ by $\theta$. However, in the case of problem A we can obtain that the attractor is smooth (consists of strong solutions). To estimate $||\left\{u_t,v_t\right\}||$ we use the structure $\mathcal{A}_R=\mathcal{M}^{u}(\mathcal{N})$ ($\mathcal{N}=\left\{x\in\mathcal{H}|S_tx=x, \forall t>0\right\}$ is the set of stationary points) which implies that for all trajectory $\gamma\subset\mathcal{A}_R$ and $\varepsilon>0$ there exists $T_{\gamma,\varepsilon}$ that $||\left\{u_t,v_t\right\}||< \varepsilon,$ $-\infty<t< T_{\gamma,\varepsilon}$. This idea is borrowed from \cite{Tynd} (see also \cite{Manuscript}). We refer to \cite[Proposition 5.4, Step 1]{Tynd} how to obtain the result of the following theorem from the inequality \eqref{as_key}.         
\begin{theorem} \label{local_2}
Let conditions of the Theorem \ref{assmooth} be in force. Assume also that $\mathcal{A}_R=\mathcal{M}^{u}(\mathcal{N}\cap\mathcal{E}_R)$. Then there holds $\mathcal{A}_R\subset H^4(\Omega_1)\times H^4(\Omega_2)\times H^2(\Omega_1)\times H^2(\Omega_2)\times H^2(\Omega_1)$.   
\end{theorem}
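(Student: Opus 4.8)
The plan is to show that along any trajectory lying on $\mathcal A_R$ the time derivative $(u_t,v_t,\theta_t)$ is again of finite energy, and then to read off the stated regularity by elliptic bootstrapping from the equations. Concretely, suppose we have established that $u_t\in H^2(\Omega_1)$, $v_t\in H^2(\Omega_2)$ and $u_{tt},v_{tt},\theta_t\in L^2$ along the trajectory. Then \eqref{a2} gives $\beta_0\Delta\theta=\rho_0\theta_t-\mu\Delta u_t\in L^2(\Omega_1)$, so $\theta\in H^2(\Omega_1)$; next \eqref{a1} gives $\beta_1\Delta^2u=g_1-\rho_1u_{tt}-\mu\Delta\theta\in L^2(\Omega_1)$ and \eqref{a3} gives $\beta_2\Delta^2v=g_2-\rho_2v_{tt}\in L^2(\Omega_2)$ (for problem A the forcing $g_i$ is bounded in $L^2$ on $\mathcal E_R$, since $M(u,v)$ is bounded and $\Delta u,\Delta v\in L^2$). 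Feeding $\Delta^2u,\Delta^2v\in L^2$ together with the transmission conditions \eqref{6} into the elliptic regularity for the transmission problem (\cite[Lemma 2.2]{mr}) yields $u\in H^4(\Omega_1)$ and $v\in H^4(\Omega_2)$. This is exactly the claimed inclusion, so the whole matter reduces to the finiteness of the energy of the derivative state.

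To obtain that finiteness I would differentiate in time by increments. Fix $h>0$ and put $u^h(t)=u(t+h)-u(t)$, $v^h(t)=v(t+h)-v(t)$, $\theta^h(t)=\theta(t+h)-\theta(t)$; this triple solves \eqref{a1}--\eqref{a3} with forcing $g^h_i=F_i(\,\cdot\,(t+h))-F_i(\,\cdot\,(t))$ and the transmission conditions \eqref{4}--\eqref{6}, and both endpoints satisfy \eqref{51} since $\gamma\subset\mathcal A_R$. Denoting by $E_h(t)$ the energy \eqref{energy} of $(u^h,v^h,\theta^h)$, the observability estimate \eqref{as_key}, applied on a window $[s,s+T]$ with $T>T_0$, bounds $E_h(s+T)$ by the thermal term $\int_s^{s+T}\!\int_{\Omega_1}|\nabla\theta^h|^2$, the bilinear remainder $\Psi_T(u^h,v^h)$ of \eqref{psipsi}, and the compact lower--order integrals in $|\{u^h,v^h\}|^2$ and $|\Delta^{-1}_D\{\rho_1u^h_t,\rho_2v^h_t\}|^2$. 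The Lipschitz bound \eqref{lipsitz} controls $g^h_i$ in $L^2$ by $\|\{u^h,v^h\}\|_{H^2_T}$, so each term on the right is quadratic in the increment. Dividing by $h^2$ and letting $h\to0$, $E_h/h^2$ converges to the energy of the derivative state $(u_t,v_t,u_{tt},v_{tt},\theta_t)$, and the right--hand side passes to the corresponding velocity quantities, the lower--order ones being absorbable by compactness and interpolation.

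The main obstacle is precisely the control of the velocity norm $\|\{u_t,v_t\}\|$ (equivalently, of $\Psi_T$ and of the thermal dissipation of the increment system): because for problem A the nonlinearity is critical, the rescaled forcing $g^h_i/h$ carries the borderline terms $M(u,v)\Delta u_t$ and $M(u,v)\Delta v_t$, and no stabilizability estimate \eqref{stab_ineq} is available to bound $\int\|\{u_t,v_t\}\|^2$ directly. This is where the identity $\mathcal A_R=\mathcal M^u(\mathcal N\cap\mathcal E_R)$ enters: since every equilibrium has vanishing velocity and the trajectory approaches $\mathcal N$ as $t\to-\infty$, for each $\varepsilon>0$ there is $T_{\gamma,\varepsilon}$ with $\|\{u_t(t),v_t(t)\}\|_{L^2}<\varepsilon$ for $t<T_{\gamma,\varepsilon}$. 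Placing the left endpoint $s$ of the observability window far in the past makes the corresponding contribution negligible, and since \eqref{as_key} carries no $E(0)$ term on its right side the bound on $E_h(s+T)/h^2$ closes uniformly in $s$. Letting $s\to-\infty$ then yields a finite bound on the derivative energy; I would follow \cite[Proposition 5.4, Step 1]{Tynd} for the precise execution of this limiting argument.
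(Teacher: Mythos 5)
Your proposal is correct and follows essentially the same route as the paper, which likewise derives the result from the observability inequality \eqref{as_key} applied to time increments, controls $\|\{u_t,v_t\}\|$ via the unstable-manifold structure $\mathcal{A}_R=\mathcal{M}^{u}(\mathcal{N}\cap\mathcal{E}_R)$ (velocities small for $t$ far in the past), and defers the execution of the limiting argument to \cite[Proposition 5.4, Step 1]{Tynd}. The only addition on your side is that you make explicit the final elliptic bootstrap through the transmission regularity of \cite[Lemma 2.2]{mr}, which the paper leaves implicit.
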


In the next theorem one more condition on nonlinearity is required. The norm $||\cdot||_{s}$ of space $H^s$ was introduced in Subsection 4.2. Note also that $H^1=H^1_0(\Omega)$.   
\begin{theorem}\label{attr_local}
Let all conditions of the Theorem \ref{assmooth} hold and for all $||\left\{u^i,v^i\right\}||_2<r$ there exists such $\delta>0$ that
\begin{equation} \nonumber
||\left\{F_1(u^1,v^1)-F_1(u^2,v^2),F_2(u^1,v^1)-F_2(u^2,v^2)\right\}||_{\delta}\leq c(r)
||\left\{u^1-u^2,v^1-v^2\right\}||_{2}.
\end{equation}
Consider any $R>0$, then there exists such $C_R>0$ that for any complete trajectory $\gamma=\left\{(u(t),v(t),u_t(t),v_t(t),\theta(t))|t\in\mathbb{R}\right\}\subset \mathcal{A}_{R}$ and for any $t\in \mathbb {R}$ there holds 
\begin{equation}\label{boundA_R}
\int_{\Omega}|\Delta^2 \left\{u,v\right\}|^2+|\Delta\left\{u_t,v_t\right\}|^2+|\left\{u_{tt},v_{tt}\right\}|^2\mathrm{d}\x+
\int_{\Omega_1}|\theta_t|^2+|\Delta\theta|^2\mathrm{d}\x\leq C_R.
\end{equation} 
Besides, $\mathcal{A}_R$ is the set of finite fractal dimension $d_R$.  
\end{theorem}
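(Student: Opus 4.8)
The plan is to reduce both assertions — the higher-regularity bound \eqref{boundA_R} and the finiteness of the fractal dimension $d_R$ — to a single stabilizability estimate of the form \eqref{stab_ineq} for the difference $(u,v,\theta)=(u^1-u^2,v^1-v^2,\theta^1-\theta^2)$ of two trajectories lying in $\mathcal{A}_R$, and then to invoke the abstract machinery of \cite{Manuscript}. Once \eqref{stab_ineq} is available, the smoothness bound \eqref{boundA_R} follows by repeating the argument of \cite[Theorem 4.17]{Manuscript}: one applies \eqref{stab_ineq} to the time-increments $w(t+h)-w(t)$ (which solve the same linear system with a forcing controlled, via the new hypothesis, by the increment itself) to bound $\{u_{tt},v_{tt}\}$ and $\Delta\{u_t,v_t\}$, and then recovers $\Delta^2\{u,v\}$ together with the regularity of $\theta$ from the equations by elliptic regularity for the transmission problem. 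The finiteness of $d_R$ follows from \cite[Theorem 4.4]{Manuscript}, whose hypotheses are exactly of the type \eqref{stab_ineq}. Thus the whole proof is concentrated on establishing the stabilizability estimate.

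To derive \eqref{stab_ineq} I would start from the observability inequality \eqref{as_key}, valid for the difference $(u,v,\theta)$ with forcing $g_i=F_i(u^2,v^2)-F_i(u^1,v^1)$. The energy identity for this difference gives $\beta_0\int_0^T\!\int_{\Omega_1}|\nabla\theta|^2 = E(0)-E(T)+\int_0^T\!\int_{\Omega_1}g_1u_t+\int_0^T\!\int_{\Omega_2}g_2v_t$ (the boundary dissipation on $\Gamma_1$ has the favourable sign), so the thermal term on the right of \eqref{as_key} is absorbed into $E(0)-E(T)$ plus contributions already contained in $\Psi_T(u,v)$. Combining this with \eqref{as_key} and discarding $\int_0^T E\ge 0$ yields, for $T>T_0$,
\[
(T+C_R)\,E(T)\leq C_R E(0)+C_R\Psi_T(u,v)+C_R\left[\int_0^T\!\int_{\Omega}|\{u,v\}|^2+|\Delta^{-1}_{D}\{\rho_1u_t,\rho_2v_t\}|^2\,\mathrm{d}\x\mathrm{d}t\right],
\]
whence the one-step contraction $E(T)\leq qE(0)+(\text{lower order})$ with $q<1$, once the term $C_R\Psi_T$ has been shown to be of lower order.

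The crucial step is the absorption of $\Psi_T(u,v)$, and this is where the additional hypothesis of the theorem is used. Typical terms in $\Psi_T$ have the form $\int_0^T\!\int_{\Omega_1}g_1u_t$. Instead of the naive bound $||g_1||_0\,||u_t||_0$, which is of energy order, I would exploit the gain of $\delta$ spatial derivatives, $||\{g_1,g_2\}||_\delta\leq c(r)||\{u,v\}||_2$, and pair through the duality $H^{\delta}$--$H^{-\delta}$, so that $|\int_0^T\!\int g_1u_t|\leq c(r)\int_0^T ||\{u,v\}||_2\,||\{u_t,v_t\}||_{-\delta}$. The weaker norm $||\{u_t,v_t\}||_{-\delta}$ is compact relative to the energy; interpolating between $L^2$ and the norm $||\Delta^{-1}_D\{\rho_1u_t,\rho_2v_t\}||$ (which gains two derivatives) and then interpolating $||\{u,v\}||_2$ against $||\{u,v\}||_0$, Young's inequality dominates this term by $\eta\sup_{[0,T]}E$ plus a constant multiple of the lower-order quantities $||\{u,v\}||_0^2$ and $||\Delta^{-1}_D\{\rho_1u_t,\rho_2v_t\}||^2$ appearing on the right of \eqref{stab_ineq}. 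Choosing $\eta$ small, the energy contribution is absorbed (after the standard control of $\sup_{[0,T]}E$ by $E(0)$ through the energy identity). Iterating the resulting one-step estimate over the intervals $[nT,(n+1)T]$ and summing the geometric series produces the exponential factor $C_Re^{-\omega_R t}$ in \eqref{stab_ineq}.

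I expect the absorption of the critical contribution in $\Psi_T$ to be the main obstacle: everything rests on the gain of $\delta$ derivatives, and it is precisely this gain that fails in the critical Berger case (Problem~A), which is why that case admits only the smoothness statement of Theorem~\ref{local_2} rather than the full estimate \eqref{stab_ineq}. Here, with the extra hypothesis in force, the compactness of the lower-order terms (via the Aubin--Simon argument already used in the proof of asymptotic smoothness) guarantees that they do not obstruct the contraction, and the two quoted results from \cite{Manuscript} then deliver \eqref{boundA_R} and the finite fractal dimension $d_R$.
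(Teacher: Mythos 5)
Your proposal is correct and follows essentially the same route as the paper: the proof is reduced to the stabilizability estimate \eqref{stab_ineq}, which is obtained from \eqref{as_key} by absorbing $\Psi_T(u,v)$ through the $H^{\delta}$--$H^{-\delta}$ duality pairing afforded by the extra hypothesis (exactly the paper's estimate \eqref{starDavid}), controlling the thermal term via the energy identity, and then iterating/summing to produce the exponential factor, with \cite[Theorems 4.4 and 4.17]{Manuscript} supplying the finite dimensionality and the smoothness bound \eqref{boundA_R}. The only cosmetic difference is that the paper records the intermediate one-step bound in the form $E(T)\leq \frac{C_R}{T}(E(0)+\dots)$ and cites \cite[Remark 3.30]{Manuscript} for the passage to \eqref{stab_ineq}, while you phrase it as a contraction with factor $q<1$; these are equivalent.
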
 
\begin{proof}
We prove \eqref{stab_ineq}. After that to establish the result one should follow the line of Theorems 4.4 and 4.17 in \cite{Manuscript}.

Note that \eqref{51} is satisfied ($R$ from \eqref{51} may differ from $R$ here). Then we may use inequality \eqref{as_key}. 

We estimate $\Psi_T(u,v)$. 
\begin{eqnarray}
\nonumber &&
\int_{\Omega_1}g_1(t)u_t\mathrm{d}\x+\int_{\Omega_2}g_2(t)v_t\mathrm{d}\x=\int_{\Omega}\Delta^{\delta}_{D}\left\{\frac{g_1(t)}{\rho_1},\frac{g_2(t)}{\rho_2}\right\}\Delta^{-\delta}_{D}\left\{\rho_1u_t,\rho_2v_t\right\}\mathrm{d}\x\leq\\
\label{starDavid} &&\hspace{70 pt} \leq \varepsilon E(t)+C_{R,\varepsilon}\int_{\Omega}|\left\{u,v\right\}|^2+|\Delta^{-1}_{D}\left\{\rho_1u_t,\rho_2v_t\right\}|^2\mathrm{d}\x\;\;\;\;\forall \varepsilon>0.  
\end{eqnarray}  
Thus 
\begin{equation}
\nonumber
\Psi_T(u,v)\leq \varepsilon (1+T)\int_{0}^{T}E(t)\mathrm{d}t+C_{R,\varepsilon}(1+T)\int_{0}^{T}\int_{\Omega}|\left\{u,v\right\}|^2+|\Delta^{-1}_{D}\left\{\rho_1u_t,\rho_2v_t\right\}|^2\mathrm{d}\x\mathrm{d}t.
\end{equation}
From energetical equality and \eqref{starDavid} we also have
\begin {eqnarray} \nonumber
&&\beta_0\int_{0}^{T}\int_{\Omega_1}|\nabla\theta|^2\mathrm{d}\x\mathrm{d}t\leq E(0)-E(T)+\varepsilon\int_{0}^{T}E(t)\mathrm{d}t+\\
\nonumber&&\hspace{70 pt} +C_{R,\varepsilon}\int_{0}^{T}\int_{\Omega}|\left\{u,v\right\}|^2+|\Delta^{-1}_{D}\left\{\rho_1u_t,\rho_2v_t\right\}|^2\mathrm{d}\x\mathrm{d}t.
\end{eqnarray} 
Choosing sufficiently small $\varepsilon$ we obtain from \eqref{as_key} 
\begin{equation}
\label{stab}
E(T)\leq \frac{C_{R}}{T}\left(E(0)+C_{R,T}\int_{0}^{T}\int_{\Omega}|\left\{u,v\right\}|^2+|\Delta^{-1}_{D}\left\{\rho_1u_t,\rho_2v_t\right\}|^2\mathrm{d}\x\mathrm{d}t\right).
\end{equation}
This inequality transforms into \eqref{stab_ineq} with the help of manipulations the same as in \cite[Remark 3.30]{Manuscript}.  
\end{proof}  

To prove the existence of a global attractor we apply Theorem \ref{31} with $\mathcal{B}=\mathcal{N}$ where $\mathcal{N}$ is the set of stationary points. For both problems A and B the set $\mathcal{N}$ is bounded. Thus we have to find appropriate Lyapunov function and prove that this function is constant (with respect to $t \in \mathbb R$  when weak solution is substituted) only when argument is a stationary point. We take Lyapunov function given by \eqref{Lyap}.

We formulate auxiliary result inspired by \cite[Lemma 2.3]{Avalos2} and \cite[Lemma 3.1]{mr}. 
Consider a problem in $\Omega_2$
\begin{equation}\label{pa1}
\rho_2w_{tt}+\beta_2\Delta^2 w- b(t)\Delta w=0
\end{equation}
with boundary conditions
$$
w=\frac{\partial w}{\partial \nu}=0, \;\;\x\in \Gamma_2\cup\Gamma_0
$$
We denote 
$$
E_w(t)=\frac{1}{2}\int_{\Omega_2}\rho_2|w_t|^2+\beta_2|\Delta w|^2\mathrm{d}\x.
$$
The function $b(t)\in C[0,T]$. 
\begin{lemma}
There holds 
\begin{list}{}{}
\item[{1.}] There exists $C>0$ such that for all $T>0$
\begin{equation}\label{D_just}
\int_{0}^{T}\int_{\Gamma_2\cup\Gamma_0}|\Delta w|^2\mathrm{d}\Gamma\mathrm{d}t\leq C \int_{0}^{T}E_w(t)\mathrm{d}t+C(E_w(0)+E_w(T)). 
\end{equation}
\item[{2.}] Let $m(\x)=\x-\x_0$ and assume that \eqref{geom2} holds. There exists $C>0$ such that for all $T>0$
\begin{equation}\label{mge}
\int_{0}^{T}E_w(t)\mathrm{d}t\leq C\int_{0}^{T}\int_{\Gamma_0}m\cdot\nu|\Delta w|^2\mathrm{d}\Gamma\mathrm{d}t+C(E_w(0)+E_w(T)). 
\end{equation}
\end{list}
\end{lemma}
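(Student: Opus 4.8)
The plan is to prove both assertions by the classical multiplier (Rellich--Pohozaev) method for the Kirchhoff plate, exactly in the spirit of the functionals $J_2$ and $J_4$ treated above and of \cite[Lemma 3.1]{mr}; all estimates are carried out on strong solutions and extended by density. Two preliminary remarks drive the computation. First, multiplying \eqref{pa1} by $w_t$ and integrating shows that $\frac{d}{dt}E_w(t)=-\tfrac{1}{2}b(t)\frac{d}{dt}\int_{\Omega_2}|\nabla w|^2\,d\x$, so $E_w$ need not decay and the terms $E_w(0)$ and $E_w(T)$ must be kept on the right-hand side. Second, the clamped conditions $w=\partial_\nu w=0$ on the full boundary $\partial\Omega_2=\Gamma_2\cup\Gamma_0$ force $\nabla w=0$ there, whence every boundary term carrying a first-order factor drops out, and the only surviving second derivative on $\partial\Omega_2$ is the normal one, with $\partial_\nu^2 w=\Delta w$; this is precisely what reduces all boundary integrals to multiples of $\int_{\partial\Omega_2}(\cdot)|\Delta w|^2\,d\Gamma$.

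For part 1 I would multiply \eqref{pa1} by $h\cdot\nabla w$, where $h\in[C^2(\overline\Omega_2)]^2$ is chosen with $h=\nu$ on $\partial\Omega_2$, and integrate over $\Omega_2\times(0,T)$. The inertial term yields the time-boundary contribution $[\int_{\Omega_2}\rho_2 w_t(h\cdot\nabla w)\,d\x]_0^T$ together with an interior term $-\tfrac12\int\rho_2(\operatorname{div}h)|w_t|^2$. Two integrations by parts in the biharmonic term, using $\nabla w=0$ and $\partial_\nu^2 w=\Delta w$ on $\partial\Omega_2$, produce the leading boundary term $-\tfrac{\beta_2}{2}\int_{\partial\Omega_2}(h\cdot\nu)|\Delta w|^2\,d\Gamma=-\tfrac{\beta_2}{2}\int_{\Gamma_2\cup\Gamma_0}|\Delta w|^2\,d\Gamma$, plus interior terms controlled by $\int_{\Omega_2}|\Delta w|^2$ after the elliptic bound $\|w\|_{H^2(\Omega_2)}\le C\|\Delta w\|_{L^2(\Omega_2)}$. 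The term $-b(t)\Delta w$ integrates by parts to a quantity bounded by $\sup_{[0,T]}|b|\cdot\int_{\Omega_2}|\nabla w|^2\le CE_w$. Collecting everything and bounding the time-boundary term by $|\int\rho_2 w_t(h\cdot\nabla w)|\le CE_w$ (again via $\|\nabla w\|\le C\|\Delta w\|$), I solve for the boundary integral and obtain \eqref{D_just}.

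Part 2 follows the same scheme with the radial field $m(\x)=\x-\x_0$ in place of $h$; this is the computation behind $J_4$. The decisive simplification is dimensional: in $\mathbb{R}^2$ one has $\operatorname{div}m=2$, so the interior contributions of both the inertial and biharmonic terms collapse exactly to $\int_0^T\int_{\Omega_2}(\rho_2|w_t|^2+\beta_2|\Delta w|^2)\,d\x\,dt=2\int_0^T E_w(t)\,dt$, while the lower-order term $-b(t)\Delta w$ contributes \emph{nothing} at leading order, since $\int_{\Omega_2}\nabla w\cdot\nabla(m\cdot\nabla w)\,d\x=0$ after using $\operatorname{div}m=2$. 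The biharmonic boundary term is $\tfrac{\beta_2}{2}\int_{\partial\Omega_2}(m\cdot\nu)|\Delta w|^2\,d\Gamma$; splitting $\partial\Omega_2=\Gamma_2\cup\Gamma_0$ and invoking \eqref{geom2}, the $\Gamma_2$-piece has $m\cdot\nu\le0$ and is discarded with the favourable sign, leaving only the $\Gamma_0$-integral. Bounding the residual term $[\int\rho_2 w_t(m\cdot\nabla w)]_0^T$ by $C(E_w(0)+E_w(T))$ as before yields \eqref{mge}.

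The main obstacle is bookkeeping rather than conceptual: one must carry out the two integrations by parts in the biharmonic term cleanly and, crucially, justify the boundary reduction $\partial_\nu^2 w=\Delta w$, equivalently that the full Hessian on a clamped boundary reduces to its normal--normal component. The sign structure is the second delicate point: part 2 works only because \eqref{geom2} makes $m\cdot\nu\le0$ on $\Gamma_2$, so the uncontrolled boundary integral there drops out with the correct sign; were this sign reversed, the estimate would fail. The role of $b(t)$ is benign, as it cancels identically at leading order in part 2 and is a lower-order perturbation absorbed into $\int_0^T E_w\,dt$ in part 1, with the constant depending on $\sup_{[0,T]}|b|$ (uniformly bounded in the intended application, where $b$ is evaluated along a bounded trajectory).
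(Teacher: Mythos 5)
Your proposal is correct and follows essentially the same route as the paper: the multiplier $h\cdot\nabla w$ with $h=\nu$ on $\Gamma_2\cup\Gamma_0$ for part 1 and the radial multiplier $m\cdot\nabla w$ for part 2, with the boundary reduction $\Delta w=\partial_\nu^2 w$ on the clamped boundary and the sign condition \eqref{geom2} used exactly as in the paper's identities. Your explicit verification that the $b(t)\Delta w$ term cancels identically in part 2 (via $\operatorname{div}m=2$) is a correct reading of what the paper leaves implicit in its stated identity.
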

\begin{proof}
Let us consider the vector field $h=(h_1,h_2)\in [C^2(\Omega_2)]^2$ such that $h(\x)=\nu, \forall \x \in \Gamma_2\cup \Gamma_0$. Then 
\begin{eqnarray}\nonumber
&&\frac{\mathrm{d}}{\mathrm{d}t}\int_{\Omega_2}\rho_2w_th\cdot w\mathrm{d}\x=
\\\nonumber&&\hspace{20 pt}=
\frac{\beta_2}{2}\int_{\Gamma_2\cup\Gamma_0}|\Delta w|^2\mathrm{d}\Gamma-
\frac{1}{2}\int_{\Omega_2}(\nabla\cdot h)(\rho_2|w_t|^2-\beta_2|\Delta w|^2-b(t)|\nabla w_t|^2)\mathrm{d}\x +\\\nonumber&&\hspace{40 pt}
-\int_{\Omega_2}\left(b(t)\frac{\partial h_k}{\partial x_j}\frac{\partial w}{\partial x_j}\frac{\partial w}{\partial x_k}-2\beta_2\Delta w\frac{\partial h_k}{\partial x_j}\frac{\partial ^2 w}{\partial x_k\partial x_j}\right)\mathrm{d}\x-\int_{\Omega_2}\beta_2 \Delta w \Delta h_k\frac{\partial w}{\partial x_k}\mathrm{d}\x.
\end{eqnarray}
Integrating over $[0,T]$, using Young`s inequality we obtain \eqref{D_just}.  
To prove second part of this Lemma we consider 
\begin{eqnarray}
\nonumber&&\frac{\mathrm{d}}{\mathrm{d}t}\int_{\Omega_2}\rho_2w_tm\cdot\nabla w\mathrm{d}\x=\\\nonumber&&\hspace{20 pt}=\frac{\beta_2}{2}
\int_{\Gamma_2\cup \Gamma_0}(m\cdot\nu)|\Delta w|^2\mathrm{d}\Gamma -\int_{\Omega_2}\left(\rho_2|w_t|^2+\beta_2|\Delta w|^2\right)\mathrm{d}\x 
\end{eqnarray}
From integrating over $[0,T]$ \eqref{mge} follows. 
\end{proof} 
There holds 
\begin{proposition} \label {graddd}
Let $x=(u_0,v_0,u_1,v_1,\theta_0)\in\mathcal{H}$.
Let conditions of Theorem \ref{assmooth} hold. Besides, we consider problem A and problem B with additional assumption $f_2\equiv0$. Then if $\mathcal{E}(S_tx)=\mathcal{E}(x)$ for all $t\in\mathbb R$ then $S_tx=x$ for all $t\in\mathbb R$.    
\end{proposition}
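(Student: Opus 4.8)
The plan is to show that constancy of $\mathcal{E}$ forces, successively, the temperature $\theta$, then the velocity $u_t$, and finally the velocity $v_t$ to vanish identically, so that the state $S_tx$ is a rest point. First I would feed the hypothesis $\mathcal{E}(S_tx)\equiv\mathcal{E}(x)$ into the energy identity \eqref{Eniq}. Since its right-hand side is a nonpositive dissipation integral whose total must then vanish over every interval, I obtain $\int_{\Omega_1}|\nabla\theta|^2\,\mathrm{d}\x=0$ for a.e. $t$, i.e. $\nabla\theta\equiv0$ in $\Omega_1$ for all $t$. Because $\theta=0$ on $\Gamma_0$ by \eqref{5} and $\Omega_1$ is connected, this yields $\theta\equiv0$ in $\Omega_1\times\mathbb{R}$.

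Next I would exploit the thermoelastic coupling. Inserting $\theta\equiv0$ into \eqref{2} gives $\mu\Delta u_t=0$, so $u_t$ is harmonic in $\Omega_1$ for each $t$. Differentiating the clamped conditions \eqref{4} in time shows $u_t=\frac{\partial u_t}{\partial\nu}=0$ on the relatively open smooth piece $\Gamma_1$; by unique continuation for the Laplacian (legitimate since on $\mathcal{A}_R$ we deal with strong solutions) this forces $u_t\equiv0$, whence $u$ is time-independent in $\Omega_1$. Now I would propagate this across the interface: as $u$ is stationary and $\theta\equiv0$, the four relations in \eqref{6} show that each of $v,\frac{\partial v}{\partial\nu},\Delta v,\frac{\partial\Delta v}{\partial\nu}$ is time-independent on $\Gamma_0$, so together with the clamped conditions on $\Gamma_2$ one gets $v_t=\frac{\partial v_t}{\partial\nu}=\Delta v_t=\frac{\partial\Delta v_t}{\partial\nu}=0$ on all of $\partial\Omega_2$.

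It remains to prove $v_t\equiv0$ in $\Omega_2$, where there is no dissipation — this is the heart of the matter. For problem B with $f_2\equiv0$ the equation \eqref{3} in $\Omega_2$ is the conservative linear plate equation, so $p:=v_t$ solves \eqref{pa1} with $b\equiv0$ under full clamped boundary conditions, and its energy $E_p$ is conserved. Applying the observability inequality \eqref{mge} to $w=p$, whose interface integral $\int_{\Gamma_0}m\cdot\nu|\Delta p|^2\,\mathrm{d}\Gamma$ vanishes because $\Delta v_t=0$ on $\Gamma_0$, yields $\int_0^T E_p\,\mathrm{d}t\le C(E_p(0)+E_p(T))=2CE_p(0)$; since the left side equals $TE_p(0)$, letting $T\to\infty$ forces $E_p\equiv0$, hence $v_t\equiv0$. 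For problem A the same scheme applies once the Berger coefficient $M(\cdot)$ is shown to be constant in time. Here I would combine two balances: constancy of the global $\mathcal{E}$ (after discarding the now-constant $\Omega_1$- and $\theta$-contributions) gives $\tfrac{\rho_2}{2}\int_{\Omega_2}|v_t|^2\,\mathrm{d}\x+\tfrac{\beta_2}{2}\int_{\Omega_2}|\Delta v|^2\,\mathrm{d}\x+\tfrac14 M^2=\mathrm{const}$, while multiplying \eqref{3} by $v_t$ and integrating over $\Omega_2$ (all $\Gamma_0$ boundary terms drop since $v_t=\frac{\partial v_t}{\partial\nu}=0$ there) gives the same expression with $\tfrac14 M^2$ replaced by $\tfrac{1}{4\gamma}M^2$; subtracting forces $M^2$, hence $M$, to be constant. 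Then \eqref{3} becomes a conservative constant-coefficient plate equation, $p=v_t$ again solves \eqref{pa1}, and the observability argument above — using that Theorem \ref{local_2} keeps $E_p$ bounded along the strong trajectory, so $\int_0^\infty E_p\,\mathrm{d}t<\infty$ — forces $E_p\equiv0$. Collecting $\theta\equiv0$, $u_t\equiv0$, $v_t\equiv0$ shows $S_tx$ is independent of $t$, i.e. $S_tx=x$.

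The hard part will be Step four: steps one and two are unique-continuation folklore, but $\Omega_2$ carries no direct damping, so the whole difficulty is to show that the transmitted, time-independent Cauchy data on $\Gamma_0$, fed into the star-shaped multiplier estimate \eqref{mge}, is strong enough to annihilate the free plate oscillation in $\Omega_2$. For the Berger case the delicate preliminary point is neutralizing the time-dependent coefficient $M(t)$ before \eqref{pa1} can be invoked; the subtraction argument above does this away from the borderline balance $\gamma=1$, which would need a minor variant.
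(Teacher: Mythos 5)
Your overall scheme --- kill $\theta$ via the energy identity, then $u_t$ via the heat equation and unique continuation, then transmit time-independent Cauchy data through $\Gamma_0$ and run the multiplier estimate \eqref{mge} on $\Omega_2$ --- is exactly the paper's, and your treatment of problem B with $f_2\equiv0$ is essentially the paper's argument. Two caveats even there: the paper works with the difference $w(t)=v(t+h)-v(t)$ rather than with $p=v_t$ precisely because mild solutions give $v_t$ only $L^2$-valued regularity, so $E_p$ is not defined; and your appeal to the smoothness of $\mathcal{A}_R$ (Theorem \ref{local_2}) to supply that regularity is circular, since the structure $\mathcal{A}_R=\mathcal{M}^{u}(\mathcal{N}\cap\mathcal{E}_R)$ is exactly what Proposition \ref{graddd} is needed to establish (and the proposition concerns an arbitrary $x\in\mathcal{H}$, not a point of an attractor).

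The genuine gap is in problem A, at the step ``$M$ is constant in $t$''. Your subtraction of the global balance against the local $\Omega_2$ balance is vacuous: for \eqref{Pi_1} to hold one must take $\Pi=\frac{1}{4\gamma}M^2$ (the paper's $\frac14 M^2$ is a normalization slip), and with that normalization the multiplier-$v_t$ identity on $\Omega_2$ produces exactly the same potential term $\frac{1}{4\gamma}M^2$ as the reduced global identity, so the two balances coincide and their difference is $0=0$. The nonzero factor $\frac14-\frac{1}{4\gamma}$ in your computation comes from mixing inconsistent normalizations, and you yourself note the argument dies at $\gamma=1$. The paper instead reads the alternative off the now-stationary $u$-equation, $\int_{\Omega_1}\beta_1\Delta u\,\Delta\phi_1=M(u,v)\int_{\Omega_1}u\,\Delta\phi_1$ with time-independent left-hand side, giving the dichotomy ``either $u\equiv0$ or $M$ is independent of $t$''. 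This forces a case analysis you are missing: when $u\equiv0$ the coefficient $M=M(v)$ is genuinely time-dependent, and one must apply \eqref{mge} to $v$ itself together with the conservation law \eqref{pae} to identify $M^2(v(t))$. Finally, even when $M$ is constant your concluding step fails as written: $w$ satisfies $\rho_2w_{tt}+\beta_2\Delta^2w-M\Delta w=0$, whose conserved quantity is $E_w+\frac{M}{2}||\nabla w||^2$ rather than $E_w$; for $M<0$ this is not sign-definite, so $\int_0^{\infty}E_w\,\mathrm{d}t<\infty$ only yields $E_w(t_n)\rightarrow0$ along a sequence, and the paper's separate sign analysis ($M\geq0$ versus $-|\Gamma|\leq M<0$) is needed to upgrade this to $E_w\equiv0$.
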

\begin{proof} 
From \eqref{Eniq}, Poincaret inequality and $\mathcal{E}(S_tx)=\mathcal{E}(x)$ we have that $\theta\equiv 0$. \\
To proceed we use variation definition of a weak solution. First we substitute $\phi_1=\phi_2=0$ and any $\phi_3\in C^1(0,T;H^1_D):\phi_3(0)=\phi_3(T)=0$ and obtain that 
$$
\int_{0}^{T}\int_{\Omega_1}\Delta u \phi_{3,t}\mathrm{d}\x\mathrm{d}t=0
$$  
and, hence, for arbitrary $\phi_3\in H^1_0(0,T;H^2_0(\Omega_1))$
$$
\int_{0}^{T}\int_{\Omega_1}u_t \Delta \phi_3 \mathrm{d}\x\mathrm{d}t=0
$$
and, then $u_t=0$.

Now, if we set $w(t)=v(t+h)-v(t)$ then $w(t)$ for arbitrary $h>0$ satisfies 
\begin{eqnarray}
&&\rho_2w_{tt}+\beta_2\Delta^2w=F_2(u,v(t))-F_2(u,v(t+h)),\;\mathrm{in}\;\x\in\Omega_2\label{ucp1}\\
&&w|_{\Gamma_0\cup \Gamma_2}=\frac{\partial w}{\partial\nu}|_{\Gamma_0\cup \Gamma_2}=\Delta w|_{\Gamma_0}=\frac{\partial\Delta w}{\partial\nu}|_{\Gamma_0}=0.\label{ucp2}
\end{eqnarray}

And our aim is to prove that $w\equiv0$. It is easy when $\frac{\partial F_2}{\partial v}=0$ (this is the case of problem B with $f_2=0$), since then \eqref{ucp1} and \eqref{ucp2} is a well-known unique continuation problem solved, e.g., in \cite{LT}. In this case we can use \eqref{mge} in the following way 
\begin {equation}\label{ettt}
\int_{-T_1}^{T_2}E_{w}(t)\mathrm{d}t\leq C(E_w(-T_1)+E_w(T_2)),\;\;T_1+T_2>0 
\end{equation}
where $E_w(t)=\int_{\Omega_2}\rho_2|w_t|^2+\beta_2|\Delta w|^2\mathrm{d}\x$. After that, tending $T_i\rightarrow +\infty$ and using boundness of $E_w(t)$ we obtain that $E_w(t)\in L^1(\mathbb{R})$ which in view that $E_w$ is a positive implies that there exist $t_n^{\pm}\rightarrow\pm\infty$ such that $E(t_n^{\pm})\rightarrow 0$ if $n\rightarrow\infty$. Finally, setting $T_2=t^{+}_{n}$ and $T_1=-t^{-}_n$ again tending $n\rightarrow\infty$ in \eqref{ettt} we have $E_w(t)\equiv 0$. 

We continue with problem A.  
    
Taking $\phi_1 \in  H^2_0(\Omega_1)$ and $\phi_2=0$ we obtain 
$$
\int_{\Omega_1}\beta_1\Delta u \Delta \phi_1\mathrm{d}\x=
M(u,v)\int_{\Omega_1}u \Delta \phi_1\mathrm{d}\x
$$
Then either $u\equiv 0$ or $M(u,v)$ does not depend on $t$. \\
{CASE I.} If $u\equiv0$ then $M(u,v)=M(v)$ and $v$ is a weak solution to the problem \eqref{pa1} with $b(t)=M(v)$  and, moreover, $\Delta v=0$ in $L^2((0,T)\times\Gamma_0)$. Also we have energy relation 
\begin{equation}\label{pae}
E_v(T)+\frac{1}{2}M^2(v(T))=E_v(0)+\frac{1}{2}M^2(v(0))
\end{equation}  
Let us use \eqref{mge}
\begin{equation}\label{aaaa1}
\int_{0}^{T}E_v(t)\mathrm{d}t\leq C(E_v(0)+E_v(T))
\end{equation}
In particular, we get that there exists a sequence $t^{+}_n\rightarrow\infty$ such that $E_v(t_n^{+})\rightarrow 0$ and, thus, $M^2(v(t^{+}_n))\rightarrow \Gamma^2$. From \eqref{pae} we get 
\begin{equation}\label{aaaa2}
E_v(t)+\frac{1}{2}M^2(v(t))=\frac{1}{2}\Gamma^2,\;\;\;\forall t\in \mathbb R
\end{equation}
and since 
\begin{equation}\nonumber
\int_{t}^{T}E_v(\tau)\mathrm{d}\tau\leq C(E_v(t)+E_v(T))
\end{equation}
for all $-\infty<t<T<+\infty$ we have $E_v(t)\equiv 0$.\\
{CASE II.}
If $u\neq 0$ in $L^2$ then $M(u,v)=M$ does not depend on $t$ and, particularly, $\int_{\Omega_2}|\nabla v|^2\mathrm{d}\x$ does not depend on $t$. Consider $h>0$ and $w(t)=v(t+h)-v(t)$ is a weak solution to the problem \eqref{pa1} with $b(t)=M$ and also $\Delta w=0$ in $L^2((0,T)\times\Gamma_0)$. We use \eqref{mge}
$$
\int_{0}^{T}E_w(t)\mathrm{d}t\leq C(E_w(0)+E_w(T))
$$ 
and the energy relation has the following form 
$$
E_w(T)+\frac{1}{2}M||\nabla w(T)||^2_{L^2}=E_w(0)+\frac{1}{2}M||\nabla w(0)||^2_{L^2} 
$$
If $M\geq0$ then in view that $||\nabla w(t)||^2_{L^2}\leq C E_w(t)$ we have 
$$\begin{array}{l}
\int_{0}^{T}E_w(t)+\frac{1}{2}M||\nabla w(t)||^2_{L^2}\mathrm{d}t=T\left(E_w(0)+\frac{1}{2}M||\nabla w(0)||^2_{L^2}\right)
\leq\\\hspace{20 pt}\leq C(2E_w(0)+\frac{1}{2}M(||\nabla w(0)||^2_{L^2}-||\nabla w(T)||^2_{L^2}))\leq(2+\frac{1}{2}M)CE_w(0)
\end{array}$$ 
for all $T>0$. Then $E_w(t)\equiv 0$.\\
If $-|\Gamma|\leq M <0$ then $||\nabla w(t)||^2_{L^2}\leq 4 \Gamma^2/ \gamma$ and 
$$
\int_{0}^{T}E_w(t)\mathrm{d}t\leq C \left( 2E_w(0)+\frac{1}{2}|M|(||\nabla w(T)||^2_{L^2}+||\nabla w(0)||^2_{L^2})\right)\leq 2C(E_w(0)+|M|\Gamma/\gamma)
$$
From energy identity we have $E_w(t)=\frac{1}{2}|M|||\nabla w (t)||^2\leq 4|M| \Gamma^2/ \gamma$ for all $t>0$ and $h>0$. Obviously this estimate is extended on $t\in\mathbb R$. Again we have the inequality 
\begin{equation}\nonumber
\int_{t}^{T}E_w(\tau)\mathrm{d}\tau\leq C(E_w(t)+E_w(T))
\end{equation}
for all $-\infty<t<T<+\infty$. Thus $E_w(t)\equiv 0$ and, hence, $v(t)=v(0)$.    
 \end{proof}

Now we may formulate main result devoted to attractor. 
\begin{theorem} \label{4m}
Let conditions of Theorem \ref{assmooth} hold. Then dynamical system $(\mathcal{H},S_t)$ corresponding to problem A possesses a compact global attractor $\mathcal {A}=\mathcal{M}^{u}(\mathcal{N})$ from $H^4(\Omega_1)\times H^4(\Omega_2)\times H^2(\Omega_1)\times H^2(\Omega_2)\times H^2(\Omega_1)$. The dynamical system $(\mathcal{H},S_t)$ corresponding to problem B with additional condition $f_2\equiv0$ possesses a compact global attractor $\mathcal {A}=\mathcal{M}^{u}(\mathcal{N})$ of finite fractal dimension and for any full trajectory $\gamma$ from the attractor \eqref{boundA_R} holds (of course, with $C$ instead of $C_R$).  
\end{theorem}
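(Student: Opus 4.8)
The plan is to apply the abstract gradient-system criterion Theorem~\ref{31} to $(X,S_t)=(\mathcal{H},S_t)$ with the functional $\mathcal{E}$ from \eqref{Lyap} as Lyapunov function, and then to transfer the fine regularity and dimension of the resulting attractor from the local theory in Theorems~\ref{local_2} and~\ref{attr_local}. First I would record that $\mathcal{E}$ is indeed a Lyapunov function: the energy equality \eqref{Eniq} has a manifestly nonpositive right-hand side, so $t\mapsto\mathcal{E}(S_tx)$ is nonincreasing, and it is constant on a full trajectory exactly when $\theta\equiv0$ along it. Boundedness of $\mathcal{E}$ from above on bounded sets is read off from \eqref{Pi_3} and the definition of the $H^2_T$-norm, while the lower bound \eqref{Pi_2} on $\Pi$ guarantees that every sublevel set $\mathcal{E}_R=\{x:\mathcal{E}(x)\le R\}$ is bounded in $\mathcal{H}$. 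Asymptotic smoothness of $(\mathcal{H},S_t)$ is granted by Theorem~\ref{assmooth}, whose hypotheses are in force by assumption.

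The decisive input is the description of the set $\mathcal{B}$ appearing in Theorem~\ref{31}, namely the set of $x$ carrying a full trajectory along which $\mathcal{E}$ is constant. Proposition~\ref{graddd} shows that such constancy forces $S_tx=x$, so $\mathcal{B}$ coincides with the stationary set $\mathcal{N}$. It then remains to check that $\mathcal{N}$ is bounded. A stationary point is $(u,v,0,0,0)$ with $\{u,v\}\in H^2_T$ solving the elliptic transmission system obtained from \eqref{1} and \eqref{3} by dropping the time derivatives and setting $\theta=0$, subject to \eqref{4} and \eqref{6}. Testing this system with $\{u,v\}$ and using the norm equivalence $H^2_T=H^2_0(\Omega)$ gives $\|\{u,v\}\|^2_{H^2_T}=-\langle\Pi'_\Phi(u,v),\{u,v\}\rangle$. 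For problem A the right-hand side equals $-M(u,v)\,s$ with $s=\int_{\Omega_1}|\nabla u|^2+\int_{\Omega_2}|\nabla v|^2\ge0$, which together with $M(u,v)=\Gamma+\gamma s$ forces $s\le|\Gamma|/\gamma$ and hence bounds $\{u,v\}$; for problem B with $f_2\equiv0$ the sign condition $\liminf_{|s|\to\infty}f_1(s)/s>0$ yields $\int_{\Omega_1}f_1(u)u\ge c\|u\|^2_{L^2}-C$ and again an a priori bound. Thus $\mathcal{N}$ is bounded, and Theorem~\ref{31} produces a compact global attractor $\mathcal{A}=\mathcal{M}^u(\mathcal{N})$.

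Finally I would transfer the fine properties from the local analysis. Being bounded, $\mathcal{A}$ lies in some $\mathcal{E}_R$, and by strict invariance together with the uniform attraction property it coincides with the local attractor $\mathcal{A}_R$ of $(\mathcal{E}_R,S_t)$ from Theorem~\ref{local_1}; moreover, since $\mathcal{N}\subset\mathcal{E}_R$ for $R$ large, $\mathcal{A}=\mathcal{M}^u(\mathcal{N})=\mathcal{M}^u(\mathcal{N}\cap\mathcal{E}_R)$, so the structural hypothesis of Theorem~\ref{local_2} holds. For problem A this places $\mathcal{A}=\mathcal{A}_R$ in $H^4(\Omega_1)\times H^4(\Omega_2)\times H^2(\Omega_1)\times H^2(\Omega_2)\times H^2(\Omega_1)$. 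For problem B with $f_2\equiv0$ the additional Lipschitz-in-$H^\delta$ bound required by Theorem~\ref{attr_local} is available, so the stabilizability estimate \eqref{stab_ineq} holds on $\mathcal{A}_R$ and yields both finite fractal dimension of $\mathcal{A}$ and the uniform regularity \eqref{boundA_R} along every full trajectory, with a constant that no longer depends on $R$ once $\mathcal{A}$ is fixed.

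The step I expect to be the main obstacle is the boundedness of the stationary set $\mathcal{N}$ for problem A: because the Berger coupling $M(u,v)$ renders the elliptic system non-monotone when $\Gamma<0$, the a priori bound must exploit the nonnegativity of $s$ and the precise affine dependence $M=\Gamma+\gamma s$ rather than a plain coercivity estimate. The remaining steps are essentially bookkeeping that assembles results established earlier in the paper.
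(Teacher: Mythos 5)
Your proposal is correct and follows essentially the same route as the paper: apply Theorem \ref{31} with the Lyapunov function \eqref{Lyap}, identify $\mathcal{B}=\mathcal{N}$ via Proposition \ref{graddd}, and transfer regularity and finite dimensionality from the local attractors of $(\mathcal{E}_R,S_t)$ by noting that $(\mathcal{H},S_t)$ and $(\mathcal{E}_{R_0},S_t)$ share the same compact attractor for $R_0$ large. In fact you supply a detail the paper only asserts, namely the a priori bound on the stationary set $\mathcal{N}$ obtained by testing the stationary system with $\{u,v\}$.
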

\begin{proof}
To establish the existence of a global attractor we apply Theorem \ref{31}. The statement about smoothness and finite dimensionality of attractor in prblem B follows from the observation that for some $R_0>0$ (and, moreover, for all $R>R_0$) dynamical systems $(\mathcal{H},S_t)$ and $(\mathcal{E}_{R_0},S_t)$ possess common compact attractor. Therefore we may use the corresponding statements of Theorem \ref {attr_local} for some large enough $R_0$.   \end{proof}

\begin{remark}
The main obstacle in proof of the existence of attractor is to establish that if $w(t)$ satisfies \eqref{ucp1} with overdefined boundary conditions \eqref{ucp2} then $w(t)\equiv 0$. We may rewrite 
\begin{eqnarray}
&&\rho_2w_{tt}+\beta_2\Delta^2w=B(t)w,\;\;
w|_{\Gamma_0\cup \Gamma_2}=\frac{\partial w}{\partial\nu}|_{\Gamma_0\cup \Gamma_2}=\Delta w|_{\Gamma_0}=\frac{\partial\Delta w}{\partial\nu}|_{\Gamma_0}=0.\label{ucp3}
\end{eqnarray}  
where $B(t)=\int_{0}^{1}\Pi''(u,v+\lambda w)\mathrm{d}\lambda$ is a linear operator such that $||B(t)w||\leq c(r)||w||_2$ for all $||\left\{0,w\right\}||_2+||\left\{u,v\right\}||_2\leq r$. For example, in case of problem B we have 
$
B(t)w=\int_{0}^{1}f'_2(v+\lambda w)\mathrm{d}\lambda\cdot w
$. If one proves that the equalities in \eqref{ucp3} with nonlinearity as in problem B or C implies that $w(t)\equiv0$ then it would give the existense of a global attractor $\mathcal {A}=\mathcal{M}^{u}(\mathcal{N})$ in the corresponding problem.   
\end{remark}

\end{document}